\documentclass[11pt]{amsart}
\usepackage[margin=1in]{geometry}
\usepackage{amsmath}
\usepackage{amsfonts,amsmath}
\usepackage{paralist}
\usepackage[colorlinks=true]{hyperref}
% Warning: when you first run your tex file, some errors might occur,
% please just press enter key to end the compilation process, then it will be fine if you run your tex file again.
% Note that it is highly recommended by AIMS to use this package.
\hypersetup{urlcolor=blue, citecolor=red}
 \numberwithin{equation}{section}
\begin{document}
	\newcommand{\oo}{\overline{\Omega}}
	\newcommand{\thl}{\theta_{1,L}(|\m|^2)}
	\newcommand{\imm}[1]{-\textup{div}\left[(I+\m\otimes\m)\nabla{#1}\right]}
	\newcommand{\dy}{\textup{dist}(y,\partial\Omega)}
	\newcommand{\ml}[2]{\left[{#1}\right]_{#2}}
	\newcommand{\mj}{\left(\left[{(|\m|^2-k_1)^+}\right]_{\frac{k}{2}}+k_1\right)}
		\newcommand{\mn}{\left[|\m|^2\right]_{L}^n}
	\newcommand{\mjs}{\left[{(|\m|^2-k_1)^+}\right]_{\frac{k}{2}}}
	%\frac{k}{2^{j+1}}
	\newcommand{\eam}{e^{\alpha|\m|^2}}
	\newcommand{\vkp}{(v-k_{j+1})^+}
		\newcommand{\vkps}{\left[(v-k_{j+1})^+\right]^2}
\newcommand{\R}{\mathbb{R}}
\newcommand{\oyt}{\textup{osc}_{ B_r(y)}p}
\newcommand{\pyt}{p_{y,r}(t)}
\newcommand{\myt}{\m_{y,r}(t)}
\newcommand{\einf}{\textup{ess inf}}
\newcommand{\esup}{\textup{ess sup}}
\newcommand{\RN}{\mathbb{R}^N}
		\newcommand{\m}{{\mathbf{m} }}
			\newcommand{\n}{{\mathbf{n} }}
			\newcommand{\w}{{\bf w}}
	\newcommand{\pyr}{p_{y,r}(t)}
	\newcommand{\irn}{{\int_{\RN}}}
\newcommand{\ds}{\Delta^2}
\newcommand{\slp}{\mbox{div}\left(|\nabla p|^{s-2}\nabla p\right)}
\newcommand{\slpk}{\mbox{div}\left(|\nabla \pk|^{s-2}\nabla \pk\right)}
\newcommand{\wk}{\w_k}
\newcommand{\nwk}{\|\w_k\|_{\infty,\Omega}}
\newcommand{\psk}{p_k}
\newcommand{\npsk}{\|\nabla p_k\|_{q,\Omega}}
\newcommand{\npskt}{\|\nabla p_k\|_{2q,\Omega}}
\newcommand{\wko}{\w_{k-1}}
\newcommand{\nwko}{\|\w_{k-1}\|_{\infty,\Omega}}
\newcommand{\psko}{p_{k-1}}
\newcommand{\npsko}{\|\nabla p_{k-1}\|_{q,\Omega}}
\newcommand{\npskot}{\|\nabla p_{k-1}\|_{2q,\Omega}}
\newcommand*{\avint}{\mathop{\ooalign{$\int$\cr$-$}}}
\newcommand{\pk}{p_{y_k,r_k}(t)}
\newcommand{\mk}{m_{y_k,r_k}(t)}
\newcommand{\qdz}{Q_{\delta_0}(0)}
\newcommand{\bdz}{B_{\delta_0}(0)}
\newcommand{\id}{\int_{Q_{\delta_0}(0)}}
\newcommand{\iqo}{\int_{Q_{1}(0)}}
\newcommand{\ibo}{\int_{B_{1}(0)}}
\newcommand{\ibd}{\int_{B_{\delta_0}(0)}}
\newcommand{\io}{\int_{\Omega}}
\newcommand{\ios}{\int_{\Omega_s}}
\newcommand{\iot}{\int_{\Omega_{\tau}}}
\newcommand{\ioT}{\int_{\Omega_{T}}}
\newcommand{\iq}{\int_{Q_{r}(z)}}
\newcommand{\iqh}{\int_{Q_{\frac{r}{2}}(z)}}
\newcommand{\ib}{\int_{B_r(y)}}
\newcommand{\ibz}{\int_{B_1(0)}}
\newcommand{\ibh}{\int_{B_{\frac{r}{2}}(y)}}
\newcommand{\ibk}{\int_{B_{r_k}(y_k)}}
\newcommand{\iqk}{\int_{Q_{r_k}(z_k)}}
\newcommand{\mzk}{m_{z_k,r_k}}
\newcommand{\qyr}{Q_r(y,\tau) }
\newcommand{\qzr}{Q_r(z) }
\newcommand{\qyrk}{Q_{r_k}(y_k,\tau_k) }
\newcommand{\qzrk}{Q_{r_k}(z_k) }
\newcommand{\er}{E_r(y,\tau) }
\newcommand{\ezr}{E_r(z) }
\newcommand{\erk}{E_{r_k}(y_k,\tau_k) }
\newcommand{\ezk}{E_{r_k}(z_k) }
\newcommand{\byr}{B_r(y) }
\newcommand{\byrk}{B_{r_k}(y_k) }
\newcommand{\aiy}{\avint_{B_r(y)}  }
\newcommand{\air}{\avint_{B_\rho(y)}  }
\newcommand{\aizr}{\avint_{Q_\rho(z)}  }
\newcommand{\aiz}{\avint_{Q_r(z)}  }
\newcommand{\aiR}{\avint_{B_R(y)}  }
\newcommand{\aiyk}{\avint_{B_{r_k}(y_k)}  }
\newcommand{\aizk}{\avint_{Q_{r_k}(z_k)}  }
\newcommand{\ykr}{y_k+r_ky}
\newcommand{\tkr}{\tau_k+r_k^2\tau}
\newcommand{\maxt}{\max_{t\in[\tau-\frac{1}{2}r^2, \tau+\frac{1}{2}r^2]}}
\newcommand{\mat}{\max_{t\in[\tau-\frac{1}{2}r^2, \tau+\frac{1}{2}r^2]}}
\newcommand{\maxd}{\max_{\tau\in[-\frac{1}{2}\delta^2, \frac{1}{2}\delta^2]}}
\newcommand{\maxn}{\max_{\tau\in[-\frac{1}{2}, \frac{1}{2}]}}
\newcommand{\maxtk}{\max_{t\in[\tau_k-\frac{1}{2}r_k^2, \tau_k+\frac{1}{2}r_k^2]}}
\newcommand{\maxth}{\max_{t\in[\tau-\frac{1}{8}r^2, \tau+\frac{1}{8}r^2]}}
\newcommand{\mnp}{(\m\cdot\nabla p) }
\newcommand{\mnpl}{(\m\cdot\nabla (p-\ell_{j+1})^+) }
\newcommand{\nsk}{(n_k\cdot\nabla \sk) }
\newcommand{\ot}{\Omega_T }
\newcommand{\mrt}{\m_{y,r}(t) }
\newcommand{\mz}{\m_{z,r} }
\newcommand{\prt}{p_{y,r}(t) }
\newcommand{\pkt}{p_{y_k,r_k}(\tkr) }
\newcommand{\wks}{|w_{k}|^2 }
\newcommand{\sk}{\psi_{k} }
\newtheorem{theorem}{Theorem}[section]
\newtheorem{corollary}{Corollary}[section]
\newtheorem*{main}{Main Theorem}
\newtheorem{lemma}[theorem]{Lemma}
\newtheorem{clm}[theorem]{Claim}
\newtheorem{proposition}{Proposition}[section]
\newtheorem{conjecture}{Conjecture}
\newtheorem*{problem}{Problem}
\theoremstyle{definition}
\newtheorem{definition}[theorem]{Definition}
\newtheorem{remark}{Remark}
\newtheorem*{notation}{Notation}
\newcommand{\rn}{\mathbb{R}^N}
\newcommand{\gr}{G_\rho}
\newcommand{\pr}{\phi_\rho}
\newcommand{\br}{B_{\rho}}
\newcommand{\mr}{\m_{\rho}}
\newcommand{\thr}{\theta_{\rho}}
\newcommand{\f}{{\bf g}}
\newcommand{\eps}[1]{{#1}_{\varepsilon}}

%% Place the running title of the paper with 40 letters or less in []
 %% and the full title of the paper in { }.
\title[ a biological network formulation model
] %Use the shortened version of the full title
      {Partial regularity of weak solutions and life-span of smooth solutions to a biological network formulation model}

% Place all authors' names in [ ] shown as running head, Leave { } empty
% Please use `and' to connect the last two names if applicable
% Use FirstNameInitial.  MiddleNameInitial. LastName, or only last names of authors if there are too many authors
\author[Xiangsheng Xu]{}

% It is required to enter 2010 MSC.
\subjclass{Primary: 35A01, 35A09, 35M33, 35Q99.}
% Please provide minimum  5 keywords.
 \keywords{ Biological network formulation, cubic nonlinearity, life-span of smooth solutions, partial regularity of weak solutions.}

% Email address of each of all authors is required.
% You may list email addresses of all other authors, separately.
 \email{xxu@math.msstate.edu}
 %\email{email2@aimSciences.org}
 %\email{email3@ece.pdx.edu}

% Put your short thanks below. For long thanks/acknowlegements,
%please go to the last acknowlegments section.
%\thanks{The first author is supported by NSF grant xx-xxxx}

% Add corresponding author at the footnote of the first page if it is necessary. 
% Plase add $^*$ adjacent to the corresponding author's name on the first page. 
% The example shown in this template is if the first author is the corresponding author.
%\thanks{$^*$ Corresponding author: xxxx}

\maketitle

% Enter the first author's name and address:
\centerline{\scshape Xiangsheng Xu}
\medskip
{\footnotesize
% please put the address of the first author
 \centerline{Department of Mathematics \& Statistics}
   \centerline{Mississippi State University}
   \centerline{ Mississippi State, MS 39762, USA}
} % Do not forget to end the {\footnotesize by the sign }

\bigskip

% The name of the associate editor will be entered by an editorial staff
% "Communicated by the associate editor name" is not needed for special issue.
% \centerline{(Communicated by the associate editor name)}

	\begin{abstract}
In this paper we study partial regularity of weak solutions to the initial boundary value problem for the system $-\textup{div}\left[(I+{\bf m}\otimes {\bf m})\nabla p\right]=S(x),\ \ \partial_t{\bf m}-D^2\Delta {\bf m}-E^2({\bf m}\cdot\nabla p)\nabla p+|{\bf m}|^{2(\gamma-1)}{\bf m}=0$, where $S(x)$ is a given function and $D, E, \gamma$ are given numbers. This problem has been proposed as a PDE model for biological transportation networks. The mathematical difficulty is due to the fact that the system in the model features both a quadratic nonlinearity and a cubic nonlinearity. The regularity issue seems to have a connection to a conjecture by De Giorgi \cite{DE}. We also investigate the life-span of classical solutions. Our results show that local existence of a classical solution can always be obtained and the life-span of such a solution can be extended as far away as one wishes as long as the term $\|{\bf m}(x,0)\|_{\infty, \Omega}+\|S(x)\|_{\frac{2N}{3}, \Omega}$ is made suitably small, where $N$ is the space dimension and $\|\cdot\|_{q,\Omega}$ denotes the norm in $L^q(\Omega)$.

	\end{abstract}

\section{Introduction}Network formulation and transportation networks are fundamental processes in living systems \cite{AAFM}. 
%To give a few examples, 
%Typical example situations we have in mind are
%Example situations include
The angiogenesis of blood vessels, leaf venation, and creation of neural pathways in nervous systems are some of the well known examples. Tremendous interest has been shown for these phenomena from different scientific communities such as biologists, engineers, physicists, and computer scientists. Of particular interest is their property of optimal transport of fluids and other materials. The development of mathematical models for transportation networks and network formulation is a growing field. We would like to refer the reader to \cite{ABHMS} for a comprehensive review  and analysis of existing models.  

In this paper we are interested in the mathematical analysis of
%The second part of the proposal deals with 
a PDE model  first proposed by Hu and Cai  in \cite{HC}  that
%for biological network formulation. 
%The system
%, first proposed by  \cite{HC}, 
describes the pressure field of a network using a Darcy's type equation and the dynamics of the conductance network under pressure force effects. More precisely, 
let $\Omega$ be the network region, a bounded domain in $\mathbb{R}^N$, and $T$ a positive number. Set $\ot=\Omega\times(0,T)$. We study the behavior of solutions to the system
\begin{align}
-\textup{{div}}\left[(I+\m\otimes \m)\nabla p\right]&=S(x)\ \ \ \textup{in $\ot$,}\label{e1}\\
\partial_t\m-D^2\Delta \m-E^2\mnp\nabla p+|\m|^{2(\gamma-1)}\m&=0\ \ \ \textup{in $\ot$,}\label{e2}
\end{align}
coupled with the initial boundary conditions
\begin{align}
p(x,t)&=0, \ \ \ (x,t)\in \Sigma_T\equiv\partial\Omega\times(0,T),\label{e3}\\
\m(x,t)&=0, \ \ \ (x,t)\in \Sigma_T,\label{extm}\\
\m(x,0)&=\m_0(x),\ \ \ \ x\in\Omega\label{e4}
\end{align} 
for given function $S(x)$ and physical parameters $D, E, \gamma$ to be specified at a later time.
%This problem arises in the study of network formulation and transportation networks \cite{H, HC, AAFM}. Examples of such networks one has in mind are
%the angiogenesis of blood vessels, leaf venation, and creation of neural pathways in nervous systems. The development of mathematical models to describe them has attracted a lot of attention. Our problem here was first
%This system has been 
%proposed by Hu and Cai \cite{HC}. 
%to describe natural network formulation. 
%In this case 
Here the scalar function $p=p(x,t)$ is the pressure due to Darcy's law, while the vector-valued function $\m=(m_1(x,t), \cdots, m_N(x,t))^T$ is the conductance vector. The function $S(x)$ is the time-independent source term. Values of the parameters $D, E$, and $\gamma$ are
determined by the particular physical applications one has in mind. For example, in leaf venation we have $\gamma \in[\frac{1}{2},1]$\cite{HC}. 

In general nonlinear problems do not possess classical solutions. A suitable notion of a weak solution must be obtained for \eqref{e1}-\eqref{e4}. It turns out \cite{HMP} that we can introduce the following:

%	\noindent {\bf Definition.} 
\begin{definition}
	Let $\Omega, T$ be given as before. A pair $(\m, p)$ is said to be a weak solution to \eqref{e1}-\eqref{e4} in $\ot$ if:
	\begin{enumerate}
		\item[(D1)] $\m\in L^\infty\left(0,T; \left(W^{1,2}_0(\Omega)\cap L^{2\gamma}(\Omega)\right)^N\right),\  p\in L^\infty(0,T; W^{1,2}_0(\Omega)),\  \mnp \in L^\infty(0,T;  L^{2}(\Omega))$, $\partial_t\m\in L^2\left(0,T; \left(L^2(\Omega)\right)^N\right)$;
		\item[(D2)] $\m(x,0)=\m_0$ in $C\left([0,T]; \left(L^2(\Omega)\right)^N\right)$;
		\item[(D3)] Equations \eqref{e1} and \eqref{e2} are satisfied in the sense of distributions. That is, for a.e. $t\in (0,T)$ we have
		\begin{eqnarray*}
		\lefteqn{\io \nabla p\nabla\psi dx+\io \mnp(\m\cdot\nabla\psi)dx	=\io S(x)\psi dx}\\
	&&\mbox{for each $\psi\in W^{1,2}_0(\Omega)$ with $(\m\cdot\nabla\psi)\in L^2(\Omega)$},\\
	\lefteqn{	\io\partial_t m_i\psi dx+D^2\io\nabla m_i\nabla\psi dx+\io |\m|^{2(\gamma-1)}m_i\psi dx=E^2\io\mnp\partial_{x_i} p \psi dx}\\
	&& \mbox{ for each $\psi\in W^{1,2}_0(\Omega)\cap L^{2\gamma}(\Omega)$ such that $\partial_{x_i} p\psi\in L^2(\Omega)$, $i=1, \cdots, N$}.
		\end{eqnarray*}
	\end{enumerate}
\end{definition}
%A result 
\begin{lemma}[\cite{HMP}]\label{HMP}
	%Let $\Omega$ be a bounded domain in $\RN$ with Lipschitz boundary $\partial\Omega$. 
	Assume:
	\begin{enumerate}
		\item[\textup{(H1)}] $\Omega$ is a bounded domain in $\RN$ with Lipschitz boundary $\partial\Omega$;
		%with properties:
		%	\begin{enumerate}
		\item[\textup{(H2)}] $S(x)\in L^{2}(\Omega)$; 
		\item[\textup{(H3)}] $D, E\in (0, \infty), \gamma\in [1, \infty)$; and
		%	\end{enumerate}
		\item[\textup{(H4)}] $\m_0\in\left( W^{1,2}_0(\Omega)\cap L^{2\gamma}(\Omega)\right)^N$.
	\end{enumerate} 
	Then \eqref{e1}
	-\eqref{e4} has a weak solution. 
\end{lemma}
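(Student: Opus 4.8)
The plan is to construct a weak solution by a Galerkin method for \eqref{e2} in which the pressure is slaved to $\m$ through \eqref{e1}, to derive the a priori bounds of (D1) from a Lyapunov functional, and then to pass to the limit by compactness. For fixed $\m\in(L^\infty(\Omega))^N$ the symmetric matrix $I+\m\otimes\m$ has eigenvalues $1$ (multiplicity $N-1$) and $1+|\m|^2$, so $(p,\phi)\mapsto\int_\Omega(I+\m\otimes\m)\nabla p\cdot\nabla\phi\,dx$ is bounded and coercive on $W^{1,2}_0(\Omega)$ with coercivity constant $1$; by Lax--Milgram \eqref{e1} has a unique solution $p=p[\m]\in W^{1,2}_0(\Omega)$, and $\m\mapsto p[\m]$ is locally Lipschitz. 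Testing \eqref{e1} with $p[\m]$ and using Poincar\'e's inequality gives the bound, uniform in $\m$,
\[
0\le\int_\Omega Sp[\m]\,dx=\|\nabla p[\m]\|_{2,\Omega}^2+\|\m\cdot\nabla p[\m]\|_{2,\Omega}^2\le C\|S\|_{2,\Omega}^2 .
\]

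Let $V_n\subset W^{1,2}_0(\Omega)$ be the span of the first $n$ eigenfunctions of $-\Delta$, let $\m_n(0)\to\m_0$ in $(W^{1,2}_0(\Omega)\cap L^{2\gamma}(\Omega))^N$, and let $\m_n:[0,T]\to(V_n)^N$ solve the Galerkin version of \eqref{e2} with $p$ replaced throughout by $p[\m_n(t)]$; local existence follows from Carath\'eodory's theorem, using the continuity of $\m\mapsto p[\m]$. The decisive structural fact is that \eqref{e1}--\eqref{e2} is the $L^2$-gradient flow of
\[
\mathcal E[\m]=\tfrac{D^2}{2}\|\nabla\m\|_{2,\Omega}^2+\tfrac1{2\gamma}\|\m\|_{2\gamma,\Omega}^{2\gamma}+\tfrac{E^2}{2}\int_\Omega Sp[\m]\,dx .
\]
Differentiating \eqref{e1} in $t$ and testing with $p[\m(t)]$, then testing \eqref{e1} itself with $\partial_t p[\m(t)]$ and subtracting (the two bilinear expressions coincide by symmetry of $I+\m\otimes\m$), one finds $\frac{d}{dt}\int_\Omega Sp[\m]\,dx=-2\int_\Omega(\m\cdot\nabla p)\nabla p\cdot\partial_t\m\,dx$; together with \eqref{e2} tested against $\partial_t\m$ this yields $\frac{d}{dt}\mathcal E[\m(t)]=-\|\partial_t\m\|_{2,\Omega}^2$. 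Since the pressure is not truncated, this computation holds verbatim at the Galerkin level, so from $\mathcal E\ge0$ and $\sup_n\mathcal E[\m_n(0)]<\infty$ we obtain constants independent of $n$ and of $T$ with
\[
\sup_{0\le t\le T}\Big(\|\nabla\m_n(t)\|_{2,\Omega}^2+\|\m_n(t)\|_{2\gamma,\Omega}^{2\gamma}+\|\m_n(t)\cdot\nabla p[\m_n(t)]\|_{2,\Omega}^2\Big)+\int_0^T\|\partial_t\m_n\|_{2,\Omega}^2\,dt\le C .
\]
This bounds precisely the quantities named in (D1), makes $\m_n$ global on $[0,T]$, and, being independent of $T$, is what powers the life-span statement of the abstract.

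Passing to the limit, extract subsequences with $\m_n\overset{\ast}{\rightharpoonup}\m$ in $L^\infty(0,T;(W^{1,2}_0\cap L^{2\gamma})^N)$, $\partial_t\m_n\rightharpoonup\partial_t\m$ in $L^2(\ot)$, and $p[\m_n]\overset{\ast}{\rightharpoonup}p$ in $L^\infty(0,T;W^{1,2}_0)$. By the Aubin--Lions lemma $\m_n\to\m$ in $C([0,T];(L^2(\Omega))^N)$ and a.e.\ in $\ot$, so $|\m_n|^{2(\gamma-1)}\m_n\to|\m|^{2(\gamma-1)}\m$ (Vitali, with the uniform $L^{2\gamma}$ bound) and $I+\m_n\otimes\m_n\to I+\m\otimes\m$ a.e.\ and in $L^q(\ot)$ for $q<N/(N-2)$. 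Subtracting the weak forms of \eqref{e1} for $p[\m_n]$ and for $p[\m]$ and testing with their difference, one controls the resulting term through the a.e.\ convergence of the coefficients and the uniform energy bounds, which upgrades the convergence of the pressures to $\nabla p[\m_n]\to\nabla p[\m]=\nabla p$ strongly in $L^2(\ot)$; hence $p=p[\m]$ and the cubic term $(\m_n\cdot\nabla p[\m_n])\nabla p[\m_n]\to(\m\cdot\nabla p)\nabla p$ distributionally. Passing to the limit in the Galerkin identities shows that $(\m,p)$ solves \eqref{e1}--\eqref{e2} in the sense of distributions, and $\partial_t\m\in L^2(\ot)$ gives $\m\in C([0,T];(L^2(\Omega))^N)$ with $\m(\cdot,0)=\m_0$; thus (D1)--(D3) hold.

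The favorable features of the system --- the coercivity constant $1$ of $I+\m\otimes\m$, the dissipative reaction $|\m|^{2(\gamma-1)}\m$, and above all the gradient-flow identity, which converts the a priori control of the cubic, apparently ill-signed term $E^2(\m\cdot\nabla p)\nabla p$ into the monotone decay of $\mathcal E$ --- make the a priori estimates essentially automatic. The main obstacle is the identification of the weak limit of $(\m_n\cdot\nabla p[\m_n])\nabla p[\m_n]$ in the last step: because $I+\m_n\otimes\m_n$ is only bounded in $L^{N/(N-2)}$ and not in $L^\infty$, no Meyers-type higher-integrability estimate for $\nabla p[\m_n]$ is available off the shelf, so the strong convergence of the pressures must be squeezed out of the monotonicity argument by careful exponent bookkeeping, using in dimension $N\ge3$ the $L^{2\gamma}$-integrability of $\m$ and, if needed, a bootstrapped integrability gain for $\nabla p[\m]$.
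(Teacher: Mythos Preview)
The paper does not prove this lemma; it is quoted from \cite{HMP}, and the text immediately following the statement only records the two a priori identities on which the original proof is based: the $L^2$ energy equality (test \eqref{e2} with $\m$ and add $2E^2$ times \eqref{e1} tested with $p$) and the dissipation identity \eqref{f5} (test \eqref{e2} with $\partial_t\m$). Your proposal correctly recovers both, recognizing the second as the gradient-flow relation $\tfrac{d}{dt}\mathcal E[\m]=-\|\partial_t\m\|_{2,\Omega}^2$, so at the level of a priori estimates you are exactly in line with what the paper (and \cite{HMP}) uses.

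Where your outline is thinner than it looks is the final limit passage, which you yourself flag. The sentence ``subtracting the weak forms \ldots\ and testing with their difference'' does not close: that test produces
\[
\|\nabla(p_n-p)\|_{2}^2+\|\m_n\cdot\nabla(p_n-p)\|_2^2
=\int_\Omega\bigl(\m\otimes\m-\m_n\otimes\m_n\bigr)\nabla p\cdot\nabla(p_n-p)\,dx,
\]
and the right-hand side involves $|\m_n|\,|\m|\,|\nabla p|$ and $|\m_n|^2|\nabla p|$, quantities for which no uniform $L^2$ bound is available (only $\m\cdot\nabla p\in L^2$ is, not $|\m|\,|\nabla p|\in L^2$). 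Your appeal to ``exponent bookkeeping'' and a ``bootstrapped integrability gain'' is a hope, not an argument. In \cite{HMP} this difficulty is handled by an additional regularization of the elliptic coefficient (so the approximate $I+\m\otimes\m$ is bounded), for which the energy identities still hold exactly; the regularization is removed only after the compactness step. A bare Galerkin scheme with the unregularized pressure map $p[\cdot]$, as you wrote it, does not by itself deliver the strong convergence $\nabla p_n\to\nabla p$ in $L^2(\ot)$ needed to identify the cubic limit.
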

The proof in \cite{HMP} was based upon the formal gradient flow structure with respect to a suitable energy functional of the system, from which followed the  estimates
\begin{eqnarray}
\lefteqn{\frac{1}{2}\int_{\Omega}|\m(x,\tau)|^2dx+D^2\int_{\Omega_\tau}|\nabla \m|^2dxdt+E^2\int_{\Omega_\tau} \mnp^2dxdt}\nonumber\\
&&+\int_{\Omega_\tau}|\m|^{2\gamma}dxdt+2E^2	\int_{\Omega_\tau} |\nabla p|^2dxd\tau\nonumber\\
&=&\frac{1}{2}\int_{\Omega}|\m_0|^2dx+2E^2\int_{\Omega_\tau} S(x)pdxdt,\nonumber\\
%	\begin{eqnarray}
\lefteqn{\int_{\Omega_\tau}|\partial_t\m|^2dxdt+\frac{D^2}{2}\int_{\Omega}|\nabla \m(x,\tau)|^2dx+\frac{E^2}{2}\int_{\Omega}\mnp^2 dx}\nonumber\\
&&+\frac{E^2}{2}\int_{\Omega}|\nabla p|^2dx+\frac{1}{2\gamma}\int_{\Omega}|\m|^{2\gamma}dx\nonumber\\
&=&\frac{D^2}{2}\int_{\Omega}|\nabla \m_0|^2dx+\frac{E^2}{2}\int_{\Omega}(\m_0\cdot \nabla p_0)^2dx+\frac{1}{2\gamma}\int_{\Omega}|\m_0|^{2\gamma}dx\nonumber\\
&&+\frac{E^2}{2}\int_{\Omega}|\nabla p_0|^2dx,\label{f5}
\end{eqnarray}
where $\tau\in (0, T], \Omega_\tau=\Omega\times(0,\tau)$, and $p_0$ is the solution of the boundary value problem
\begin{eqnarray}
-\mbox{div}[(I+\m_0\otimes \m_0)\nabla p_0] &=& S(x),
\ \ \ \mbox{in $\Omega$,}\\
p_0&=& 0\ \ \ \mbox{on $\partial\Omega$.}
\end{eqnarray}

We refer the reader to \cite{AAFM,ABHMS,HMPS, L, X5, X7} for additional results concerning modeling, numerical simulations, and various properties of solutions.
%the corresponding stationary equations, and the one-dimensional problem. 
However, the general regularity theory remains fundamentally incomplete in high space dimensions. In particular, it is not known whether or not weak solutions develop singularities in space dimension $N\geq 3$. 
%The lonely exception seems to be 
%However, we do see from 
%\cite{X5,X7} where it is shown
%, the author shows 
%that a weak solution to the 2-dimensional stationary problem must be a classical one.

%from the proof of
%As we mentioned earlier, the existence of a weak solution was first established in \cite{HMP}. 
In \cite{LX}, Jian-Guo Liu and the author studied the partial regularity of weak solutions. In this context, we assume:
\begin{enumerate}
	%with properties:
	%	\begin{enumerate}
	\item[\textup{(A1)}] $S(x)\in L^{q}(\Omega)$ for some $q>\frac{N}{2}$; 
	\item[\textup{(A2)}] $D, E\in (0, \infty), \gamma\in (\frac{1}{2}, \infty)$; and
	%	\end{enumerate}
	\item[\textup{(A3)}] $\m_0\in\left( W^{1,2}_0(\Omega)\cap L^{\infty}(\Omega)\right)^N$.
\end{enumerate} 
%\end{eqnarray}
%It has been established in \cite{HMP} that , provided that, in addition to assuming $S(x)\in L^2(\Omega)$ and (H2), we also have
It is not difficult to see from our proof below that the conclusion of Lemma \ref{HMP} remains valid if we replace (H2)-(H4) in the lemma by the assumptions (A1)-(A3).
The authors in \cite{LX} considered the following quantities:
\begin{eqnarray}
\prt&=&\avint_{B_r(y)} p(x,t)dx=\frac{1}{|\byr|}\int_{B_r(y)} p(x,t)dx,\\
\m_{y, r}(t)&=&\avint_{B_r(y)}\m(x,t)dx,\\
\mz &=&\avint_{Q_r(z)}  \m(x,t)dxdt,\\
A_r(z)&=&\frac{1}{r^{N}}\mat\int_{B_r(y)}(p(x,t)-\prt)^2dx,\label{10m1}\\
%	D_r(z)&=&\mat|\mrt|,\\
%	C_r(z)&=&\mat\avint_{B_r(y)}|m(x,t)-\mrt|^2dx,\\
%The right choice for $E_r(z)$ seems to be 
%\begin{equation}
\ezr&=&\frac{1}{r^{N+2}}\int_{Q_r(z)}|\m-\mz|^2dxdt+A_r(z)+r^{2\beta},
%\end{equation}
\end{eqnarray}
where $\beta, r>0, z=(y, \tau)\in \ot$, $B_r(y)$ is the ball centered at $y$ with radius $r$, and $Q_r(z)$ is the cylinder $B_r(y)\times(\tau-\frac{1}{2}r^2,\tau+\frac{1}{2}r^2 )$. Here and in what follows it is understood that if $Q_r(z)$ (resp. $B_r(y)$) is not contained in $\ot$ (resp. $\Omega$) we replace $Q_r(z)$ by $Q_r(z)\cap \ot$ (resp. $B_r(y)\cap\Omega$).
A result of \cite{LX} asserts that $p\in C([0,T]; L^2(\Omega))$, and thus \eqref{10m1} makes sense.
The main result of \cite{LX} can be stated as follows:
\begin{lemma}[\cite{LX}] \label{LX}Let \textup{(H1)}, \textup{(A1)}-\textup{(A3)} be satisfied and $(\m, p)$ be a weak solution of \eqref{e1}-\eqref{e4}. Assume:
	\begin{enumerate}
		\item[\textup{(A4)}] $N=2$ or $3$.
	\end{enumerate}   If $z\in \ot$ is such that 
	\begin{equation}\label{lx1}
	\liminf_{r\rightarrow 0}\ezr =0\ \mbox{and $\limsup_{r\rightarrow 0}\mz<\infty$}, 
	\end{equation}
	then $z$ is a regular point. That is, there is a neighborhood of $z$ in which $\m$ is H\"{o}lder continuous. Furthermore, the set of all non-regular points, i.e., singular points, which we denote by $\mathbb{S}$, has parabolic Hausdorff dimension $N$.	
\end{lemma}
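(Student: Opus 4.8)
The approach I would take is the De Giorgi--Caffarelli--Kohn--Nirenberg partial-regularity scheme: prove an $\varepsilon$-regularity (excess-decay) estimate for the quantity $E_r(z)$, iterate it to H\"older continuity of $\m$ near any point where the hypothesis holds, and then bound the size of the exceptional set by a covering argument. Two preliminary facts are needed. First, on any parabolic subcylinder on which $\m$ is essentially bounded, \eqref{e1} is a scalar uniformly elliptic equation with ellipticity ratio controlled by $\|\m\|_\infty^2$, so De Giorgi--Nash, Meyers' higher-integrability estimate and $S\in L^q$ with $q>\tfrac N2$ yield $p\in C^{0,\alpha}_{\mathrm{loc}}$ and $\nabla p\in L^{2+\delta}_{\mathrm{loc}}$ for some $\delta>0$. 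Second, smallness of $E_r(z)$ together with boundedness of $\m_{z,r}$ forces $\m$ to be essentially bounded on a smaller cylinder; I would get this by a De Giorgi truncation iteration on \eqref{e2}, estimating the cubic term $E^2(\m\cdot\nabla p)\nabla p$ by pairing $|\nabla p|^2$ (now in $L^{1+\delta}$, or via the a priori bound $(\m\cdot\nabla p)\in L^\infty(0,T;L^2)$ from \eqref{f5}) against the truncated function through a Sobolev embedding — the first place the restriction $N\le 3$ is used.

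The core is the excess-decay lemma: there are $\varepsilon_0,M>0$ and $\theta\in(0,1)$ so that $E_r(z)\le\varepsilon_0$ and $|\m_{z,r}|\le M$ imply $E_{\theta r}(z)\le\tfrac12 E_r(z)$. I would argue by contradiction and blow-up. If it fails, pick $z_k=(y_k,\tau_k)$ and $r_k\to 0$ with $\lambda_k^2:=E_{r_k}(z_k)\to 0$ and $\m_{z_k,r_k}\to\m_\ast$, set $\tilde\m_k(x,t)=\m(y_k+r_kx,\tau_k+r_k^2t)$, $\m_k=\lambda_k^{-1}(\tilde\m_k-\m_{z_k,r_k})$, and let $q_k$ be the corresponding rescaling of $p$ centered at its spatial average, so that $\tfrac1{|Q_1|}\int_{Q_1}|\m_k|^2\le 1$ and $\max_t\tfrac1{|B_1|}\int_{B_1}|q_k|^2\le 1$. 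The decisive bookkeeping device is the term $r^{2\beta}$ in $E_r(z)$, which forces $\lambda_k\ge r_k^\beta$; hence in the rescaled system
\[
\partial_t\m_k-D^2\Delta\m_k=E^2\lambda_k(\tilde\m_k\cdot\nabla q_k)\nabla q_k-\frac{r_k^2}{\lambda_k}|\tilde\m_k|^{2(\gamma-1)}\tilde\m_k,\qquad
-\operatorname{div}\bigl[(I+\tilde\m_k\otimes\tilde\m_k)\nabla q_k\bigr]=\frac{r_k^2}{\lambda_k}\,S(y_k+r_k\cdot),
\]
the forcing coefficients carry positive powers of $r_k$, e.g.\ $r_k^2/\lambda_k\le r_k^{2-\beta}$ and $r_k^{2-N/q}/\lambda_k\le r_k^{2-N/q-\beta}$, both tending to $0$ once $\beta<2-N/q$ (possible as $q>\tfrac N2$; one also needs $\beta<1$). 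A Caccioppoli inequality for each equation bounds $\nabla\m_k$ and $\nabla q_k$ in $L^2$ of smaller cylinders; Aubin--Lions (with $\partial_t\m_k$ bounded in $L^2(W^{-1,2})$ via the first equation) gives $\m_k\to\m_\infty$ strongly in $L^2$, and $\tilde\m_k\to\m_\ast$ strongly together with elliptic compactness gives $q_k\to q_\infty$. In the limit the nonlinear terms drop, so $\m_\infty$ solves the constant-coefficient heat system $\partial_t\m_\infty-D^2\Delta\m_\infty=0$ and $q_\infty$ solves $\operatorname{div}[(I+\m_\ast\otimes\m_\ast)\nabla q_\infty]=0$; the classical interior estimates for these linear equations give $\limsup_k E_{\theta r_k}(z_k)/\lambda_k^2\le C\theta^2+\theta^{2\beta}$, which is $<\tfrac12$ for $\theta$ small, contradicting $E_{\theta r_k}(z_k)\ge\tfrac12\lambda_k^2$.

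The step I expect to be the main obstacle is the control of the cubic coupling $(\m\cdot\nabla p)\nabla p$ — both in the De Giorgi boundedness step and in the blow-up passage to the limit, where one must simultaneously handle $q_k$ through an elliptic equation whose coefficients $I+\tilde\m_k\otimes\tilde\m_k$ are only bounded (not near the identity) and show that $\lambda_k(\tilde\m_k\cdot\nabla q_k)\nabla q_k$ genuinely vanishes distributionally so that the limiting $\m$-equation is linear; the integrability of $\nabla p$ and of $\m$ available for this is exactly what ties the argument to $N=2,3$ (through the Sobolev exponents appearing there and in the Caccioppoli inequality for \eqref{e2}). Granting the lemma, the iteration is routine: if $E_{r_0}(z)\le\varepsilon_0$ and $|\m_{z,r_0}|\le M/2$ at one small scale, then $E_{\theta^kr_0}(z)\le 2^{-k}E_{r_0}(z)\to 0$ geometrically and $\m_{z,\theta^kr_0}$ is Cauchy, so the hypotheses survive all dyadic scales; since $z'\mapsto E_r(z')$ is continuous for fixed $r$, the same holds throughout a parabolic neighborhood of $z$, where the geometric decay of $\tfrac1{r^{N+2}}\int_{Q_r(z')}|\m-\m_{z',r}|^2+A_r(z')$ and the Campanato characterization of H\"older spaces give $\m\in C^{0,\alpha}$. (One may then bootstrap through \eqref{e1}--\eqref{e2} to improve regularity.) In particular any $z$ satisfying \eqref{lx1} is regular, since \eqref{lx1} supplies the required smallness along a sequence of radii.

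For the singular set $\Sigma$, note $\Sigma\subseteq\{z:\liminf_{r\to0}E_r(z)>0\}\cup\{z:\limsup_{r\to0}|\m_{z,r}|=\infty\}$. From \eqref{f5}, $|\nabla\m|^2,|\partial_t\m|^2\in L^1(\Omega_T)$ and $|\nabla p|^2\in L^\infty(0,T;L^1(\Omega))$; Poincar\'e in $x$ and the elementary time-oscillation bound $\int_{B_r(y)}|\m(x,t)-\m(x,s)|^2\,dx\le|t-s|\int_{B_r(y)}\int_s^t|\partial_{t'}\m|^2\,dt'\,dx$ give $E_r(z)\le\tfrac{C}{r^{N}}\int_{Q_r(z)}(|\nabla\m|^2+|\partial_t\m|^2)\,dx\,dt+A_r(z)+r^{2\beta}$, and, using also that $p\in C([0,T];L^2(\Omega))$ so that $\{p(\cdot,t)\}_t$ is precompact in $L^2(\Omega)$, one checks $A_r(z)\to0$ as $r\to0$ off a set of parabolic Hausdorff dimension at most $N$. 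A Vitali covering argument — covering the exceptional set by disjoint cylinders $Q_{r_i}(z_i)$ on which the relevant $L^1(\Omega_T)$-density stays above a fixed threshold $\delta$, so that $\sum_i r_i^N\le\delta^{-1}\int_{\bigcup_i Q_{r_i}}(\cdots)$, and then letting the enclosing neighborhood shrink and invoking absolute continuity of the integral — shows that each such exceptional set, and the analogously treated set $\{\limsup|\m_{z,r}|=\infty\}$, has vanishing $N$-dimensional parabolic Hausdorff measure; hence so does $\Sigma$, which gives the stated assertion.
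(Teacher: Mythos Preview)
The paper does not give its own proof of this lemma: it is quoted verbatim as the main result of \cite{LX}, and the only information provided is the single sentence ``The proof in \cite{LX} is argument by contradiction.'' Your proposal follows exactly that template --- an $\varepsilon$-regularity (excess-decay) lemma established by blow-up and contradiction, Campanato iteration to H\"older continuity in a neighborhood, and a Vitali covering argument for the parabolic Hausdorff-dimension bound on the singular set --- so it is consistent with what the paper indicates about the method in \cite{LX}.

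One place your sketch is thinner than the actual argument would have to be: the quantity $A_r(z)$ carries a $\max_t$, so the density/covering argument for the singular set cannot be run purely from the $L^1(\Omega_T)$ integrability of $|\nabla\m|^2+|\partial_t\m|^2$; to get $\liminf_{r\to0}A_r(z)=0$ off an exceptional set of parabolic Hausdorff dimension at most $N$ you really need to exploit $\nabla p\in L^\infty(0,T;L^2(\Omega))$ (a spatial density statement, uniform in $t$) rather than just $p\in C([0,T];L^2(\Omega))$. This is a technical detail, not a conceptual gap, but it is where your outline is most hand-wavy.
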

The proof in \cite{LX} is argument by contradiction. In the first part of this paper we shall investigate the partial regularity  issue from a different perspective. To introduce our results, we let
\begin{eqnarray}
%\lefteqn{
\mbox{osc}_{ B_r(y)}p
%}\nonumber\\
&=&\esup_{x_1, x_2\in B_r(y)}(p(x_1,t)-p(x_2,t))\nonumber\\
&&  \mbox{for $y\in \oo$ and a.e. $t\in (0,T)$, and}\\
\delta_r(y)&=&\esup_{0\leq t\leq T}	\mbox{osc}_{ B_r(y)}p.
\end{eqnarray}
To see that $\delta_r(y)$ is well-defined, we invoke Proposition 2.1 in \cite{LX} which states
%In view of , we have  that
\begin{equation}\label{pbd}
p\in L^\infty(\ot).
\end{equation}
%Hence,  .
%present a direct proof of a new partial regularity result, which can be viewed as a refinement of the preceding lemma. To be precise, we have: 
\begin{theorem}\label{linfty}Let \textup{(H1)}, \textup{(A1)}-\textup{(A3)} be satisfied and $(\m, p)$ be a weak solution of \eqref{e1}-\eqref{e4}.	If $y\in\oo$ is such that
	\begin{equation}\label{oscp1}
	\delta_r(y)\rightarrow 0\ \ \textup{as $r\rightarrow 0$,}
	\end{equation}
	then for each $n>0$ there is a $r>0$ with
	%suitably small we have
	\begin{eqnarray}
	|\m|^{2n}&\in& L^\infty(0,T;L^2(B_r(y)))\cap L^2(0,T; W^{1,2}(B_r(y))),\ \ \mbox{and}\\
	|\m|^{2n}\mnp^2&\in &L^1(\Lambda_r(y)),
	%\ \  \mbox{for each $n\geq 1$.} 
	\end{eqnarray}
	where
	%For $y\in\oo,\ r>0$ we let
	%consider the quantities
	\begin{eqnarray}
	%B_r(y)&=&B_r(y)\cap\Omega ,\\
	\Lambda_r(y)&= &B_r(y)\times(0,T).
	%\prt&=&\frac{1}{|B_r(y)|}\int_{B_r(y)}p(x,t)dx\equiv\avint_{B_r(y)}p(x,t)dx,\nonumber\\
	%\mrt&=&\avint_{B_r(y)}\m(x,t)dx,
	%\end{eqnarray}
	%where $B_r(y)$ denotes the ball centered at $y$ with radius $r$. 
	%Set
	%\begin{eqnarray}
	\end{eqnarray}
	%there holds $
\end{theorem}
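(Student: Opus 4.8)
The plan is to test the $\m$-equation \eqref{e2} against truncated powers of $|\m|$ and iterate, using the smallness of the oscillation of $p$ to tame the term produced by the cubic coupling $(\m\cdot\nabla p)\nabla p$. Fix $y$ with $\delta_r(y)\to0$. For small $r$ choose a cut-off $\zeta$ with $\zeta\equiv1$ on $B_r(y)$, $\textup{supp}\,\zeta\subset B_{2r}(y)$, $|\nabla\zeta|\le C/r$, and put $\bar p(t)=\avint_{B_{2r}(y)}p(x,t)\,dx$, so that $|p-\bar p|\le\delta:=\delta_{2r}(y)$ on $\textup{supp}\,\zeta$ for a.e. $t$ (near $\partial\Omega$ one also uses $p=\m=0$ there). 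With $v_M=\min\{|\m|,M\}$ and a parameter $k\ge1$, the choice $v_M^{2k}\m\zeta^2$ as a test function in \eqref{e2}, after the usual computations (the commutator term from $-D^2\Delta\m$ absorbed by Young's inequality, the nonnegative reaction term discarded), gives for every $\tau$
\begin{align*}
\int_{\Omega}g_M(|\m(\tau)|)\zeta^2
&+D^2\!\int_0^\tau\!\!\int_{\Omega}\big(|\nabla\m|^2+2k\,|\nabla v_M|^2\big)v_M^{2k}\zeta^2\\
&\le C\!\int_{\Omega}|\m_0|^{2k+2}\zeta^2+\frac{C}{r^2}\!\int_0^T\!\!\int_{B_{2r}(y)}|\m|^{2k+2}
+E^2\!\int_0^\tau\!\!\int_{\Omega}(\m\cdot\nabla p)^2 v_M^{2k}\zeta^2 ,
\end{align*}
where $g_M(s)=\int_0^s\sigma\min\{\sigma,M\}^{2k}\,d\sigma\uparrow s^{2k+2}/(2k+2)$. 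The last term has the wrong sign (it comes from moving $-E^2(\m\cdot\nabla p)\nabla p\cdot v_M^{2k}\m$ to the right) and it is precisely of the strength of the quantity we want to control; everything turns on estimating it.

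For this I would use the elliptic equation \eqref{e1}. Write $A=I+\m\otimes\m$ and $\phi=v_M^{2k}\zeta^2$; since $\phi$ is bounded, $(p-\bar p)\phi$ is admissible in \eqref{e1}, which yields
\begin{align*}
\int_{\Omega}\big(|\nabla p|^2+(\m\cdot\nabla p)^2\big)\phi
&=\int_{\Omega}A\nabla p\cdot\nabla p\,\phi\\
&=\int_{\Omega}S(p-\bar p)\phi-\int_{\Omega}(p-\bar p)\,A\nabla p\cdot\nabla\phi .
\end{align*}
Bounding $|p-\bar p|\le\delta$, applying the Cauchy--Schwarz inequality to the last integral \emph{in the inner product induced by} $A$ (so that $|A\nabla p\cdot\nabla\phi|\le(A\nabla p\cdot\nabla p)^{1/2}(A\nabla\phi\cdot\nabla\phi)^{1/2}$), then ordinary Cauchy--Schwarz with weight $\phi$, and absorbing, one gets $\int_{\Omega}(\m\cdot\nabla p)^2\phi\le 2\delta\int_{\Omega}|S|\phi+\delta^2\int_{\Omega}\frac{A\nabla\phi\cdot\nabla\phi}{\phi}\,dx$. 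The decisive point is that the last integral contains \emph{no} $\nabla p$: from $A\nabla\phi\cdot\nabla\phi\le(1+|\m|^2)|\nabla\phi|^2$, $|\nabla\phi|^2/\phi\le C(k)\big(v_M^{2k-2}|\nabla v_M|^2\zeta^2+v_M^{2k}|\nabla\zeta|^2\big)$, and the identity $(1+|\m|^2)v_M^{2k-2}|\nabla v_M|^2=(v_M^{2k-2}+v_M^{2k})|\nabla v_M|^2$, one obtains
\begin{align*}
\delta^2\!\int_{\Omega}\!\frac{A\nabla\phi\cdot\nabla\phi}{\phi}\,dx
\le C(k)\delta^2\!\int_{\Omega}v_M^{2k}|\nabla v_M|^2\zeta^2
+C(k)\delta^2\!\int_{\Omega}\!\Big(|\m|^{2k-2}|\nabla\m|^2\zeta^2+\big(|\m|^{2k}+|\m|^{2k+2}\big)|\nabla\zeta|^2\Big).
\end{align*}

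Insert this (integrated in $t$) into the energy inequality. The term $C(k)\delta^2E^2\int_0^\tau\!\int v_M^{2k}|\nabla v_M|^2\zeta^2$ is now absorbed into the good term $2kD^2\int_0^\tau\!\int v_M^{2k}|\nabla v_M|^2\zeta^2$ on the left \emph{provided $\delta$ is small enough, how small depending only on $k,D,E$}; this is exactly where the hypothesis $\delta_r(y)\to0$ is used, and it is the reason $r$ must be chosen depending on $n$. The remaining terms are bounded independently of $M$: the initial term by (A3); the cut-off terms by $Cr^{-2}\int_0^T\!\int_{B_{2r}(y)}(|\m|^{2k}+|\m|^{2k+2})$; the source term by $\|S\|_{L^q(\Omega)}\,\||\m|^{k}\zeta\|_{L^2(0,T;L^{2q'})}^2$, which is finite because $q>N/2$ forces $2q'<2N/(N-2)$; and $\int_0^\tau\!\int |\m|^{2k-2}|\nabla\m|^2\zeta^2\le\int_0^T\!\int(1+|\m|^{2k})|\nabla\m|^2\zeta^2$ — all finite from the previous stage of the iteration together with the a priori estimates of \cite{HMP,LX}. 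Letting $M\to\infty$ by monotone convergence (Fatou) yields, with $\zeta\equiv1$ on $B_r(y)$,
\begin{align*}
\sup_{0<t<T}\int_{B_r(y)}|\m|^{2k+2}\,dx
&+\int_0^T\!\!\int_{B_r(y)}|\nabla\m|^2|\m|^{2k}\,dxdt\\
&+\int_0^T\!\!\int_{B_r(y)}(\m\cdot\nabla p)^2|\m|^{2k}\,dxdt<\infty .
\end{align*}
By the parabolic Sobolev embedding $L^\infty(0,T;L^2)\cap L^2(0,T;W^{1,2})\hookrightarrow L^{2(N+2)/N}$ applied to $|\m|^{k+1}\zeta$, each stage multiplies the local integrability exponent of $\m$ by $(N+2)/N>1$, so starting from $\m\in L^{2(N+2)/N}_{\textup{loc}}$ (a consequence of the a priori estimates) one reaches in finitely many stages any prescribed $k=2n-1$ (for $n\ge1$; when $0<n<1$ one runs the scheme up to $k=1$ and, to control the gradient, tests in addition against $(\varepsilon+|\m|^2)^{n-1}\m\zeta^2$ and lets $\varepsilon\downarrow0$). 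For $k=2n-1$, the identity $\nabla(|\m|^{2n}\zeta)=2n|\m|^{2n-1}(\nabla|\m|)\zeta+|\m|^{2n}\nabla\zeta$ together with the last display gives $|\m|^{2n}\in L^\infty(0,T;L^2(B_r(y)))\cap L^2(0,T;W^{1,2}(B_r(y)))$, while $(\m\cdot\nabla p)^2|\m|^{2n}\le(\m\cdot\nabla p)^2\big(1+|\m|^{2(2n-1)}\big)$ together with $\m\cdot\nabla p\in L^\infty(0,T;L^2(\Omega))$ gives $|\m|^{2n}(\m\cdot\nabla p)^2\in L^1(\Lambda_r(y))$.

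The main obstacle is the term $E^2\int(\m\cdot\nabla p)^2 v_M^{2k}\zeta^2$: with its unfavourable sign and its being exactly as strong as the quantity to be bounded, it cannot be treated perturbatively on a fixed region; the resolution is to trade it, through the elliptic equation \eqref{e1} and the $A$-weighted Cauchy--Schwarz inequality, for $\delta^2$ times an expression that no longer sees $\nabla p$ and is partly absorbable and partly of lower order — which is legitimate only where $p$ oscillates little. The secondary technical points — the rigorous justification of the (truncated) test functions in the weak formulations, the precise choice of the increasing sequence of exponents $k$ so that at every stage the source, cut-off and lower-order terms are already under control, and the modifications near $\partial\Omega$ — are routine but require some care.
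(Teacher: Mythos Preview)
Your proposal is correct and follows essentially the same strategy as the paper: test \eqref{e2} against truncated powers of $|\m|$ localized by a cut-off, then control the bad term $\int(\m\cdot\nabla p)^2\phi$ by testing \eqref{e1} with $(p-\bar p)\phi$ and using the smallness of $\delta_r(y)$ to absorb; finally iterate via the parabolic Sobolev embedding. The only noteworthy technical differences are that the paper truncates $|\m|^2$ from below at $1$ (writing the weight as $[\theta_{1,L}(|\m|^2)]^n$), which makes the lower-order gradient term $[\theta_{1,L}]^{n-2}|\nabla\theta_{1,L}|^2$ automatically dominated by the good term $[\theta_{1,L}]^{n-1}|\nabla\theta_{1,L}|^2$ and thereby removes the need to invoke the ``previous stage'' gradient bound, while you instead bookkeep that term across stages; and that your $A$-weighted Cauchy--Schwarz leading to $\delta^2\int A\nabla\phi\cdot\nabla\phi/\phi$ is a slightly slicker packaging of what the paper does by splitting $A\nabla p\cdot\nabla\phi$ into its Euclidean and $\m\otimes\m$ pieces.
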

If $N=2$, we obtain from \cite{X5} that 
\begin{equation}\label{osc1}
\left(\oyt\right)^2\ln\frac{R}{r}\leq c\int_{ B_R(y)}|\nabla p|^2dx+\int_{ B_R(y)}\mnp^2dx+cR^{2}
%\ \ \mbox{}.
%with $Q_R(z)\subset\ot$
\end{equation}
for a.e $t\in (0,T)$ and $0<r\leq R$. Here and in what follows the letter $c$ denotes a positive number whose value can always be computed from the given data at least in theory. Thus \eqref{oscp1} does hold.
In fact, this theorem is essentially Proposition 3.2 in \cite{X5}. To find conditions under which \eqref{oscp1} is true for $N>2$ turns out to be very challenging. The following theorem addresses this issue.
\begin{theorem}\label{partial}Let \textup{(H1)}, \textup{(A1)}-\textup{(A4)} all hold and $(\m, p)$ be a weak solution of \eqref{e1}-\eqref{e4}. If $y\in \Omega$ is such that
	\begin{eqnarray}
	\limsup_{r\rightarrow 0}\max_{0\leq t\leq T}|\m_{y, r}(t)|&<&\infty,\ \ \mbox{and}\label{con1}\\
	\limsup_{r\rightarrow 0}\esup_{0\leq t\leq T}\frac{1}{r^{N-2}}\int_{ B_r(y)}|\nabla \m(x,t)|^2dx&<&\infty.\label{con2}
	\end{eqnarray}
	then \eqref{oscp1} holds at $y$. Furthermore, the point $z=(y,\tau)$ satisfies \eqref{lx1} for each $\tau\in(0, T) $. That is, $\{y\}\times(0,T)$ are all regular points.
\end{theorem}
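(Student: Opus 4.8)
The plan is to verify, for every $\tau\in(0,T)$, that $z=(y,\tau)$ satisfies \eqref{lx1} and then to invoke Lemma \ref{LX}. One of the two conditions in \eqref{lx1} is immediate: by Jensen's inequality $|\m_{z,r}|\le\max_{\tau-r^{2}/2\le t\le\tau+r^{2}/2}|\m_{y,r}(t)|\le\max_{0\le t\le T}|\m_{y,r}(t)|$, which is bounded for all small $r$ by \eqref{con1}, so $\limsup_{r\to0}\m_{z,r}<\infty$. For the other, note that $E_r(z)=r^{-(N+2)}\int_{Q_r(z)}|\m-\m_{z,r}|^{2}\,dxdt+A_r(z)+r^{2\beta}$ and that $A_r(z)\le c\,\delta_r(y)^{2}$, since $|p(x,t)-p_{y,r}(t)|\le\textup{osc}_{B_r(y)}p(\cdot,t)\le\delta_r(y)$ on $B_r(y)$. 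So everything reduces to two assertions: \emph{(a)} $\delta_r(y)\to0$ as $r\to0$ --- this is precisely \eqref{oscp1}, and it simultaneously forces $A_r(z)\to0$; and \emph{(b)} $r^{-(N+2)}\int_{Q_r(z)}|\m-\m_{z,r}|^{2}\,dxdt\to0$ as $r\to0$.

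\emph{Step 1 (proof of (a)).} If $N=2$, this is immediate from the logarithmic bound \eqref{osc1}: fixing $R$ and letting $r\to0$, the right-hand side stays bounded (by (D1)) while $\ln(R/r)\to\infty$, so $\delta_r(y)^{2}\le c/\ln(R/r)\to0$; here neither \eqref{con1} nor \eqref{con2} is used. If $N=3$, the idea is to treat \eqref{e1} as a uniformly elliptic equation whose upper ellipticity bound lies in a borderline Lebesgue space near $y$. From \eqref{con1}, \eqref{con2} and the Poincar\'e--Sobolev inequality on balls centred at $y$ one extracts, for some $r_{0}>0$ and all $r\le r_{0}$, the bound $\int_{B_r(y)}|\m(x,t)|^{2N/(N-2)}\,dx\le c\,r^{N}$ uniformly in $t$; equivalently, for $N=3$, $|\m|^{2}\in L^{N}(B_{r_{0}}(y))$ with $\||\m|^{2}\|_{L^{N}(B_r(y))}\le c\,r\to0$ uniformly in $t$. (Testing \eqref{e1} by $p\zeta^{2}$ and using \eqref{pbd} likewise gives $\int_{B_r(y)}(|\nabla p|^{2}+\mnp^{2})\,dx\le c\,r^{N-2}$ uniformly in $t$, which is convenient but not essential.) Since $|\xi|^{2}\le(I+\m\otimes\m)\xi\cdot\xi\le(1+|\m|^{2})|\xi|^{2}$ with $1+|\m|^{2}$ in $L^{N}(B_{r_{0}}(y))$ and small on small balls, and $S\in L^{q}(\Omega)$ with $q>N/2$, I would run a De Giorgi / Widman hole-filling scheme for \eqref{e1} in which the smallness of the local $L^{N}$-norm of the upper ellipticity bound is exactly what closes the otherwise borderline estimate; this yields an oscillation-decay bound $\textup{osc}_{B_r(y)}p(\cdot,t)\le c\,r^{\sigma}$ for some $\sigma\in(0,1)$, with $c,\sigma$ independent of $t$, whence $\delta_r(y)\le c\,r^{\sigma}\to0$.

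\emph{Step 2 (proof of (b)).} Now that \eqref{oscp1} holds at $y$, Theorem \ref{linfty} applies and gives, for each $n>0$, a radius $r>0$ with $|\m|^{2n}\in L^{\infty}(0,T;L^{2}(B_r(y)))\cap L^{2}(0,T;W^{1,2}(B_r(y)))$ and $|\m|^{2n}\mnp^{2}\in L^{1}(B_r(y)\times(0,T))$. Testing \eqref{e2} by $|\m|^{2n}\m\zeta^{2}$ and using the identity $\mnp\,\nabla p\cdot(|\m|^{2n}\m)=|\m|^{2n}\mnp^{2}$ --- so the contribution of the term $E^{2}\mnp\nabla p$ is exactly the $L^{1}$-quantity furnished by Theorem \ref{linfty}, the term $|\m|^{2(\gamma-1)}\m$ being harmless --- a Moser iteration over $n$ should produce $\m\in L^{\infty}(B_{r_{1}}(y)\times(0,T))$ for some $r_{1}>0$. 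With $\m$ now bounded near $y$, \eqref{e1} has bounded measurable coefficients, so De Giorgi--Nash gives $p(\cdot,t)\in C^{0,\alpha}_{\textup{loc}}$ near $y$ uniformly in $t$, hence $\int_{B_{\rho}(y')}|\nabla p(x,t)|^{2}\,dx\le c\,\rho^{N-2+2\alpha'}$ for all $y'$ near $y$; feeding this (together with $\m\in L^{\infty}\cap L^{\infty}(0,T;W^{1,2})$) into \eqref{e2}, whose right-hand side then lies in a subcritical parabolic Morrey space, a Campanato-type iteration --- combined, if needed, with Meyers' higher integrability to close the exponents --- upgrades $\m$ to H\"older continuity on a full space--time neighbourhood of $\{y\}\times(0,T)$. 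In particular $\m$ is continuous at $z=(y,\tau)$, so $r^{-(N+2)}\int_{Q_r(z)}|\m-\m_{z,r}|^{2}\,dxdt\le c\big(\sup_{Q_r(z)}|\m-\m(z)|\big)^{2}\to0$. Combining (a) and (b) gives $\liminf_{r\to0}E_r(z)=0$, so \eqref{lx1} holds at every $z=(y,\tau)$, and Lemma \ref{LX} then shows that $\{y\}\times(0,T)$ consists of regular points.

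\emph{Main obstacle.} The hard part is Step 1 for $N=3$: carrying out a De Giorgi / hole-filling argument for \eqref{e1} when the coefficient matrix is bounded below only by $I$ while its upper bound $1+|\m|^{2}$ lies merely in the borderline space $L^{N}$, so that the gain in the iteration must be produced entirely by the smallness on small balls extracted from \eqref{con1}--\eqref{con2}; moreover, since $\delta_r(y)$ and the hypotheses are essential suprema over $t\in[0,T]$, every constant must be tracked to be uniform in $t$. A secondary obstacle is the bootstrap in Step 2: $\m\in L^{\infty}_{\textup{loc}}$ yields a priori only $L^{1}_{\textup{loc}}$ control of $|\nabla p|^{2}$, so reaching genuine continuity of $\m$ requires carefully interlacing H\"older continuity of $p$ (to gain a Morrey exponent for $\nabla p$), Meyers-type higher integrability, and parabolic Campanato estimates.
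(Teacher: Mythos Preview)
Your Step~1 is in the right spirit and essentially matches the paper's approach (formulated there as Theorem~\ref{pro5} together with Claims~\ref{subb} and~\ref{hconp}): the smallness of $\||\m|^{2}\|_{L^{N}(B_r(y))}$ extracted from \eqref{con1}--\eqref{con2} is exactly what closes a De Giorgi scheme for subsolutions of \eqref{e1z}, though the paper finishes the oscillation decay via Moser's logarithmic functions $w_1,w_2$ and a decomposition $p=u+v$ rather than hole-filling.

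The genuine gap is in Step~2. Your claim that ``a Moser iteration over $n$ should produce $\m\in L^{\infty}(B_{r_1}(y)\times(0,T))$'' cannot be extracted from Theorem~\ref{linfty}. Look at its proof: the absorption step leading to \eqref{10s1} requires $c\,\delta_r(y)\,n\le\tfrac12$. On any \emph{fixed} ball $B_{r_0}(y)$ with $\delta_{r_0/2}(y)>0$ this caps the admissible exponent $n$, so one cannot send $n\to\infty$ there; if instead one shrinks the radius as $n$ grows (as the iteration \eqref{10ow1} does, halving the ball at each step), the ball collapses to a point. Either way one does not obtain $\m\in L^{\infty}$ on a fixed neighbourhood, and your downstream bootstrap (bounded $\m\Rightarrow$ uniform ellipticity $\Rightarrow$ $p\in C^{\alpha}$ nearby $\Rightarrow$ $\m\in C^{\beta}$) never gets started. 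Incidentally, if that bootstrap \emph{did} work it would already yield H\"older continuity of $\m$ on a neighbourhood of $\{y\}\times(0,T)$, so the final appeal to Lemma~\ref{LX} would be superfluous.

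The paper sidesteps the $L^{\infty}$ issue entirely. It aims only for the Campanato-type bound $\aiz|\m-\mz|^{1+\sigma}\,dxdt\le cr^{2\beta}$ with an exponent $1+\sigma<2$, and then recovers exponent $2$ by H\"older's inequality combined with the high integrability of $|\m|$ from Theorem~\ref{linfty}. The device is the decomposition $\m=\w+\n$ on $Q_r(z)$, with $\w$ caloric and $\n$ vanishing on the parabolic boundary. For $\n$ one tests with the truncation $\theta_{-L,L}(n_i)$; the forcing $E^{2}\mnp\,\partial_{x_i}p-|\m|^{2\gamma-2}m_i$ is controlled only through the $L^{1}$-Morrey decay $\int_{B_r(y)}(|\nabla p|^{2}+\mnp^{2})\,dx\le cr^{N-2+2\alpha'}$ (a consequence of Step~1 plus Theorem~\ref{linfty}, cf.\ \eqref{10ot3}), and this yields merely a weak-$L^{5/3}$ bound on $\n$---which is precisely why the Campanato exponent must be taken below $\tfrac53$. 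This freezing-plus-weak-Lebesgue argument is the idea missing from your proposal.
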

%We shall see that 
On account of (A4), \eqref{con1} and \eqref{con2} imply \eqref{oscp1} only when $N\leq 3$. 
%This seems to be the best result possible. 
The first conclusion in this theorem will be formulated as Theorem \ref{pro5} in Section \ref{sec3}.

Note that by its definition the set of regular points is always open. 
%Denote by $\mathbb{S}$ the set of all singular points. 
We have not been able to obtain the Hausdorff measure of $\mathbb{S}$ in the context of this theorem. However, if $N=2$, then \eqref{con2} is satisfied for all $y\in\Omega$. As for \eqref{con1} in this case, we can infer from the argument given in (\cite{G}, p. 104) that
%If $z=(y,\tau)\in H_\varepsilon$, then we 
%	For each $y\in \Omega$,	we have
\begin{equation}
\max_{t\in[0, T]}	|\m_{y,r}(t)|<\max_{t\in[0, T]}|\m_{y,R}(t)|+c\ln\frac{R}{r}\ \ \textup{for all $0<r\leq R$}. 
\end{equation}
That is, for each $\varepsilon>0$ we have
\begin{equation}
\lim_{r\rightarrow 0}r^\varepsilon \max_{t\in[0, T]}	|\m_{y,r}(t)|=0.
\end{equation}
%In view of Theorem 3 in (\cite{EG}, p.77) and Theorem 2.1 in (\cite{G}, p.100) our partial regularity result seems to suggest that at each time level $t=t_0\in (0,T)$ the singular set $\mathbb{S}$ is at most countable. In this regard, 
A recent result of the author \cite{X7} indicates that
%We would like to mention that it has been conjectured \cite{X5} that
 $\mathbb{S}$ is empty when $N=2$ and some additional assumptions on $\partial \Omega$ and the given data are satisfied.
 %, and this conjecture is still open.%provided that $N=2$. 
%If $N=3$, the set $S\cap \{t=t_0\}$ has Hausdorff dimension 1.  
% Recently, Jian-Guo Liu and the author \cite{LX} obtained a partial regularity theorem for \eqref{e1}-\eqref{e4}. It states that the parabolic Hausdorff dimension of the set of singular points can not exceed $N$, provided that $N\leq 3$.

There are two very interesting mathematical features associated with the system. The first one concerns the elliptic coefficient matrix $A\equiv I+\m\otimes\m$ in the first equation.
Remember that the existing regularity theory for elliptic equations requires that the
largest eigenvalue $\lambda_l$ of $A$ and the smallest one $\lambda_s$  be
suitably ``balanced''. A typical example of such assumptions is that
%\begin{equation*}
$\lambda_l\leq c\lambda_s$
%\end{equation*}
and $\lambda_s$ is an $A_2$-weight \cite{HKM}. That is, we have
%$\lambda_s$ satisfies the inequality
\begin{equation}
\aiy \lambda_s dx\aiy\frac{1}{\lambda_s}dx\leq c\ \ \ \mbox{for each ball $B_r(y)\subset\Omega$.}
\end{equation}
%In our case,  we have
The matrix $A$ here satisfies
\begin{equation}\label{ellip}
|\xi|^2\leq (A\xi\cdot\xi)\leq (1+|\m|^2)|\xi|^2\ \ \mbox{for each $\xi\in \mathbb{R}^N$.}
\end{equation}Thus if $\m$ is not locally bounded a priori, our case lies outside 
the scope 
of the standard elliptic regularity theory. Our situation seems to be related to a conjecture by De Giorgi \cite{DE} (also see \cite{OZ}), which, in our context, roughly says  that 
\begin{equation}
\esup_{0\leq t\leq T}\io\exp\sqrt{1+|\m|^2}\ dx<\infty\ \ \mbox{impies}\ \ p\in L^\infty(0,T;C_{\textup{loc}}(\Omega)).
\end{equation}
This is indeed true if the space dimension is 2 \cite{X5}. Unfortunately,  the membership of $p$ in $L^\infty(0,T;C_{\textup{loc}}(\Omega))$ is not enough to bridge the gap to the local boundedness of $\m$. As we shall see in Section \ref{sec3}, we need to strengthen the assumption to $p\in L^\infty(0,T;C_{\textup{loc}}^\alpha(\Omega))$ for some $\alpha\in (0,1)$ in order to show that $\m$ is locally bounded.
%lead to the conclusion that $\m$ is locally bounded. We will show in  that the conclusion is valid if we 
%The quadratic nonlinearity in $\m$ also poses a mathematical challenge. 
The second one is the tri-linear term $\mnp\nabla p$ in the system, which actually represents a cubic nonlinearity. Currently, there has not been much research work done on this type of nonlinearities.

In the second part of this paper we study the existence of a weak solution that possesses the additional property
\begin{enumerate}
	\item[(D4)] $\|\m\|_{\infty,\ot}<\infty$ and $\sup_{0\leq t\leq T}\|\nabla p\|_{q,\Omega}<\infty$ for each $q>1$.
\end{enumerate}
We would like to remark that if $N=2$ then the two conditions in (D4) are equivalent (see Lemma \ref{mpe} below).
\begin{theorem}\label{prop1} Let \textup{(A2)} hold.
	If $\partial\Omega$ is $C^{2,\alpha}$, $S(x)\in C^\alpha(\overline{\Omega}) $, and $\m_0\in \left(C^{2,\alpha}(\overline{\Omega})\right)^N$ for some $\alpha\in (0,1)$,
	%and $$ is H\"{o}ler continuous on $$,		 
	then a weak solution to \eqref{e1}-\eqref{e4} with the additional property \textup{(D4)} is also a classical one.
\end{theorem}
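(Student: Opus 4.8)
The plan is to run a bootstrap that alternately upgrades the regularity of $\m$ and $p$, starting from the integrability built into (D4); only (D1) (which places $\m$ and $p$ in the natural energy classes, so that the linear problems below have unique weak solutions) and (D4) will be used. Two structural facts drive everything: by \eqref{ellip} and $\|\m\|_{\infty,\ot}<\infty$ the matrix $A=I+\m\otimes\m$ is uniformly elliptic with constants independent of $t$; and since $|\mnp\nabla p|\le\|\m\|_{\infty,\ot}|\nabla p|^{2}$ with $\sup_{0\le t\le T}\|\nabla p\|_{q,\Omega}<\infty$ for every $q>1$, we get $\mnp\nabla p\in L^{s}(\ot)$ for every $s<\infty$, while $|\m|^{2(\gamma-1)}\m\in L^{\infty}(\ot)$. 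Accordingly, I would read \eqref{e2} as the linear heat equation
\[
\partial_t\m-D^2\Delta\m=E^2\mnp\nabla p-|\m|^{2(\gamma-1)}\m=:\f,
\]
with $\f\in L^{s}(\ot)$ for all $s<\infty$, lateral data $\m=0$ on $\Sigma_T$, and initial datum $\m_0\in C^{2,\alpha}(\oo)\subset W^{2-2/s,s}(\Omega)$ which vanishes on $\partial\Omega$ (being the trace of $\m_0\in W^{1,2}_0(\Omega)$, consistent with \eqref{extm}); hence the zeroth-order compatibility condition holds. Linear parabolic $L^{s}$-theory (Ladyzhenskaya--Solonnikov--Ural'tseva) provides a $W^{2,1}_{s}(\ot)$ solution, which by uniqueness of weak solutions of the linear problem is $\m$ itself; so $\m\in W^{2,1}_{s}(\ot)$ for every $s<\infty$, and the parabolic Sobolev embedding gives $\m,\nabla\m\in C^{\beta,\beta/2}(\overline{\ot})$ for every $\beta\in(0,1)$, say with $\beta>\alpha$.

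Next, for each fixed $t$ equation \eqref{e1} reads $-\textup{div}(A(\cdot,t)\nabla p(\cdot,t))=S$ with $A(\cdot,t)\in C^{\alpha}(\oo)$ (uniformly in $t$), $S\in C^{\alpha}(\oo)$, $\partial\Omega\in C^{2,\alpha}$, and $p(\cdot,t)=0$ on $\partial\Omega$; no compatibility issue arises, since $p$ carries no initial condition. Elliptic Schauder theory, again identifying $p(\cdot,t)$ with the Schauder solution by uniqueness, gives $p(\cdot,t)\in C^{2,\alpha}(\oo)$ with norm bounded uniformly in $t$; in particular $\nabla p\in L^{\infty}(\ot)$ and $\nabla p(\cdot,t)\in C^{1,\alpha}(\oo)$. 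For the $t$-dependence, $p(\cdot,t_1)-p(\cdot,t_2)$ solves an elliptic equation with right-hand side $\textup{div}\!\big[(A(\cdot,t_1)-A(\cdot,t_2))\nabla p(\cdot,t_2)\big]$ and zero boundary data; combining the corresponding Schauder estimate with the uniform $C^{1,\alpha}$ bound on $\nabla p$ and the H\"older-in-$t$ dependence of $A$ in the $C^{\alpha}_x$-norm (obtained from the previous step by interpolating $C^{\alpha}$ between $C^0$ and $C^{\beta}$) shows $t\mapsto p(\cdot,t)$ is H\"older into $C^{1,\alpha}(\oo)$; in particular $\nabla p$ is H\"older continuous on $\overline{\ot}$.

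Now $\m$, $\nabla\m$ and $\nabla p$ are all H\"older continuous on $\overline{\ot}$, so $\f$ is H\"older continuous in $(x,t)$; the only delicate term is $|\m|^{2(\gamma-1)}\m$ when $\gamma$ is not an integer, but $r\mapsto r^{\gamma-1}$ is locally $\min(\gamma-1,1)$-H\"older on $[0,\infty)$ and $|\m|^{2}$ is a smooth (hence locally Lipschitz) function of the bounded quantity $\m$, so this term is H\"older in $\m$ and hence in $(x,t)$ (when $\gamma=1$ it is simply $\m$). Parabolic Schauder estimates applied to $\partial_t\m-D^2\Delta\m=\f$ then give $\m\in C^{2,1}(\ot)$, and up to $\Sigma_T$ for $t>0$. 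Consequently \eqref{e1} and \eqref{e2} hold pointwise in $\ot$; together with $\m\in C(\overline{\ot})$, $p(\cdot,t)\in C^{2,\alpha}(\oo)$ depending continuously on $t$, and the boundary and initial conditions \eqref{e3}--\eqref{e4} being attained continuously, $(\m,p)$ is a classical solution.

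I expect the crux to be the first half of the argument: turning the merely $L^{q}$ control of $\nabla p$ from (D4) into H\"older control of $\m$, and then, through the elliptic equation \eqref{e1}, into the $L^{\infty}$ (indeed H\"older) control of $\nabla p$ that is needed to make the cubic term $\mnp\nabla p$ H\"older and thus close the loop --- in effect decoupling the tri-linear nonlinearity by applying the two equations in the correct order. A secondary, and essentially unavoidable, matter is the bookkeeping at the parabolic boundary: only the zeroth-order compatibility condition is available at the corner $\partial\Omega\times\{0\}$, so one should not expect $C^{2,1}$ regularity up to that corner for a general $\m_0\in C^{2,\alpha}(\oo)$, which is why the $C^{2,1}$ conclusion is stated up to $\Sigma_T$ only for $t>0$.
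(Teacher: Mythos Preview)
Your bootstrap is correct and tracks the paper's own argument closely: both start from (D4), use $L^{s}$ parabolic estimates (Calderon--Zygmund / $W^{2,1}_{s}$) on \eqref{e2} to obtain $\m,\nabla\m\in C^{\beta,\beta/2}(\overline{\ot})$, then apply elliptic Schauder to \eqref{e1} to place $p(\cdot,t)\in C^{2,\alpha}(\oo)$ uniformly in $t$, and finally close with parabolic Schauder on \eqref{e2} once the right-hand side has been made H\"older. The only genuine methodological difference is in how the time-regularity of $\nabla p$ is obtained. The paper differentiates \eqref{e1} in $t$ to produce an elliptic equation for $\partial_t p$ with right-hand side depending on $\partial_t\m$; it then needs one more round---differentiating \eqref{e2} in $t$ and applying Calderon--Zygmund again---to make $\partial_t\m$ H\"older in space, after which Schauder on the $\partial_t p$-equation yields $\partial_t p\in L^{q}(0,T;C^{2,\alpha})$ and hence $\nabla p\in C^{\beta,\beta/2}(\overline{\ot})$. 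Your route is more direct: you compare $p(\cdot,t_1)$ and $p(\cdot,t_2)$ via the difference equation with right-hand side $\textup{div}\big[(A(\cdot,t_1)-A(\cdot,t_2))\nabla p(\cdot,t_2)\big]$, and use the H\"older-in-$t$ dependence of $A$ in the $C^{\alpha}_x$-norm (obtained by interpolation from $\m,\nabla\m\in C^{\beta,\beta/2}$) together with the uniform $C^{1,\alpha}_x$-bound on $\nabla p$ to conclude. This avoids the additional $t$-differentiation of both equations and the extra Calderon--Zygmund step, at the cost of the small interpolation argument. Your remark on compatibility at the corner $\partial\Omega\times\{0\}$ is a point the paper's outline does not raise.
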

The proof of this proposition will be presented at the end of Section 2.
%A result in \cite{X5} asserts that
\begin{theorem}\label{locexis}
	Let \textup{(A1)-(A3)} hold.	Assume:
	\begin{enumerate}
		\item[\textup{(H5)}] $\m_0$ is H\"{o}lder continuous on $\overline{\Omega}$;
		\item[\textup{(H6)}]$\partial\Omega$ is $C^{1}$.
	\end{enumerate} 
	Then
	there is a positive number $T$ determined by the given data such that \eqref{e1}-\eqref{e4} has a weak solution $(\m, p)$ with the property \textup{(D4)} on $\ot$.
\end{theorem}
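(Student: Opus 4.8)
\textbf{Proof proposal for Theorem \ref{locexis}.}

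The plan is to construct the local-in-time solution with property (D4) by a fixed-point/continuation argument, exploiting the fact that for short times the cubic term $E^2(\m\cdot\nabla p)\nabla p$ can be treated perturbatively. First I would introduce a regularized and/or linearized scheme: given $\tilde\m$ in a suitable ball of $C([0,T];(C^\alpha(\oo))^N)$ with $\tilde\m(\cdot,0)=\m_0$, solve the elliptic problem $\imm{p}=S(x)$ in $\Omega$, $p=0$ on $\partial\Omega$, with $\m$ replaced by $\tilde\m$. Since $\tilde\m\in C^\alpha$ and $S\in L^q$ with $q>N/2$ (indeed we have (H1), but one really wants $q$ large; here one uses the uniform ellipticity \eqref{ellip} with the a priori bound $\|\tilde\m\|_\infty\leq M$), De Giorgi--Nash--Moser theory and $L^p$-elliptic estimates give $\sup_t\|\nabla p\|_{q,\Omega}\leq C(M,\|S\|)$ for every finite $q$; this is where the first half of (D4) is produced. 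Then feed this $p$ into the linear parabolic system $\partial_t\m - D^2\Delta\m = E^2(\tilde\m\cdot\nabla p)\nabla p - |\tilde\m|^{2(\gamma-1)}\tilde\m$, $\m=0$ on $\Sigma_T$, $\m(\cdot,0)=\m_0$, whose right-hand side lies in $L^\infty(0,T;L^{q/2}(\Omega))$; parabolic $L^p$-theory (Ladyzhenskaya--Solonnikov--Uraltseva) then yields $\m\in C([0,T];(C^\beta(\oo))^N)$ for some $\beta>0$ provided $q/2>(N+2)/2$, i.e. $q>N+2$, which we may assume by enlarging the exponent in (H1) — or, if one wishes to stay with $q>N/2$, iterate the elliptic/parabolic bootstrap a finite number of times to raise the integrability. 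This defines a map $\tilde\m\mapsto\m$.

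The key short-time estimate is that this map is a contraction (or at least maps a ball into itself and is compact) once $T$ is small. Here I would estimate $\|\m(\cdot,t)-\m_0\|_{C^\beta}$ by splitting $\m-\m_0 = (e^{tD^2\Delta}\m_0-\m_0) + \int_0^t e^{(t-s)D^2\Delta}[\,\cdots\,]ds$; the first term $\to 0$ as $t\to 0$ by continuity of the heat semigroup on continuous functions (using $\m_0\in C^\alpha$), and the Duhamel term is bounded by $CT^{\theta}$ times a power of $M$ for some $\theta>0$ coming from the smoothing $\|e^{s\Delta}\|_{L^{q/2}\to C^\beta}\lesssim s^{-\sigma}$ with $\sigma<1$. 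Choosing $M$ slightly larger than $\|\m_0\|_{C^\alpha}$ and then $T=T(M,D,E,\gamma,S,\Omega)$ small makes the ball $\{\|\m-\m_0\|\leq M-\|\m_0\|_{C^\alpha}\}$ invariant; the difference estimate for two inputs $\tilde\m_1,\tilde\m_2$ is handled the same way, with the elliptic estimate giving $\|\nabla(p_1-p_2)\|_q\leq C(M)\|\tilde\m_1-\tilde\m_2\|_\infty$ (subtract the two elliptic equations, the difference of the coefficient matrices is $\tilde\m_1\otimes\tilde\m_1-\tilde\m_2\otimes\tilde\m_2$), and a further factor $T^\theta$ from Duhamel. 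A contraction gives a fixed point $\m$, and unwinding the construction shows $(\m,p)$ solves \eqref{e1}--\eqref{e4} in the sense of distributions and has (D1)--(D3) (the a priori energy identities \eqref{f5} hold since $\m_0\in C^{\alpha}\subset W^{1,2}_0\cap L^{2\gamma}$ once we also note $\m_0=0$ on $\partial\Omega$ is needed — if (H4) does not include the boundary condition, one first extends/modifies $\m_0$ near $\partial\Omega$, or works with the weak formulation directly). Property (D4) for the constructed solution is immediate from the uniform bounds obtained along the way.

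The main obstacle I anticipate is the low regularity of the data: $S$ is only assumed in $L^2$ (H1) and $\partial\Omega$ only $C^1$ (H5), so one does not have classical Schauder estimates for the elliptic equation, and the coefficient matrix $I+\tilde\m\otimes\tilde\m$, while uniformly elliptic with the bound \eqref{ellip}, has only $C^\alpha$ entries — enough for De Giorgi--Nash and Calder\'on--Zygmund $L^p$ estimates up to the boundary (using the $C^1$ boundary), but the quantitative dependence of these constants on $M=\|\tilde\m\|_{C^\alpha}$ and on the ellipticity ratio $1+M^2$ must be tracked carefully, since that ratio degrades as $M$ grows. One resolves this by fixing $M$ first (hence fixing all elliptic/parabolic constants) and only afterwards choosing $T$ small; the cubic term's extra power of $\nabla p$ is precisely what forces the use of high integrability $q$, and it is the interplay ``$\nabla p\in L^q$ for all finite $q$ $\Leftrightarrow$ $\m$ bounded'' (Lemma \ref{mpe}, at least for $N=2$, and the finite bootstrap for $N\geq 3$) that makes (D4) the natural functional setting in which the scheme closes. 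A secondary technical point is reconciling (H4), which gives H\"older continuity of $\m_0$ on $\oo$ but perhaps not the homogeneous boundary value \eqref{extm}; I would handle this by the standard device of subtracting a fixed smooth extension, or by re-reading (H4) together with (A3)/(H3) as implicitly including $\m_0\in W^{1,2}_0$.
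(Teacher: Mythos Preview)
Your contraction-mapping scheme is essentially correct and would yield the result, but the paper takes a noticeably different route, and the comparison is instructive.

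The paper's device is a \emph{truncation} rather than a short-time Duhamel argument. One replaces $\m$ in the elliptic equation by $\m_0+[\m-\m_0]_\varepsilon$ (componentwise truncation at level $\varepsilon$) and rewrites \eqref{e1} so that the principal part is $-\textup{div}[(I+\m_0\otimes\m_0)\nabla p]$, with all cross terms involving $[\m-\m_0]_\varepsilon$ pushed to the right-hand side. The point is that the elliptic coefficients on the left are now \emph{fixed} and continuous (by (H4)), so Lemma~\ref{lqe} applies with a constant depending only on $\m_0,\Omega,q$; the right-hand perturbation terms carry factors of $\varepsilon$ and can be absorbed once $\varepsilon$ is small. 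The modified system is then solved on an \emph{arbitrary} time interval $[0,T]$ by Leray--Schauder in $(L^\infty(\ot))^N$, using Lemma~\ref{hcon} for compactness and the $L^q$ bound just described for the a~priori estimate. Finally, H\"older continuity of the resulting $\m$ gives $|\m(x,t)-\m_0(x)|\leq ct^{\alpha/2}$, so for some $T_0\leq T$ one has $|\m-\m_0|\leq\varepsilon$ on $\Omega_{T_0}$; there the truncation is the identity and the modified system coincides with the original one.

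What each approach buys: the paper's truncation cleanly decouples the elliptic constant (fixed once $\m_0$ is fixed) from the smallness parameter, so one never has to track how the $L^q$ estimate of Lemma~\ref{lqe} depends on the modulus of continuity of a varying coefficient matrix---precisely the bookkeeping you flagged as the ``main obstacle''. Your semigroup/contraction route handles this by fixing the ball radius $M$ first, which works but is more laborious; on the other hand, a Banach fixed point gives local uniqueness for free, whereas Leray--Schauder does not. Your concern about (H1) giving only $S\in L^2$ is shared by the paper: it is content with (D4)$'$ (a single $q>1+\frac{N}{2}$), which it argues is equivalent to (D4) after Lemmas~\ref{hcon} and \ref{lqe}.
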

The next theorem reveals how the life-span of a classical solution
depends on the size of given data.
\begin{theorem}\label{larexis} Let the assumptions of Theorem \ref{locexis} be satisfied. For each $T>0$ there is a positive number $\delta=\delta(T)$ such that \eqref{e1}-\eqref{e4} has
	a weak solution on $\ot$ with the property (D4) whenever $\|S(x)\|_{\frac{2N}{3},\Omega}+\|\m_0\|_{\infty, \Omega}\leq \delta$.
\end{theorem}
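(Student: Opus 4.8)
The plan is to argue by continuation from the local solution. By Theorem~\ref{locexis} there are $T_0>0$ and a weak solution $(\m,p)$ with property (D4) on $\Omega\times(0,T_0)$; let $T^*\in(0,\infty]$ be the supremum of those $T'>0$ for which \eqref{e1}--\eqref{e4} admits a weak solution with (D4) on $\Omega\times(0,T')$. It suffices to show that, given $T>0$, there is $\delta=\delta(T)>0$ so that $\|S\|_{\frac{2N}{3},\Omega}+\|\m_0\|_{\infty,\Omega}\le\delta$ forces $T^*\ge T$: the sought solution is then the restriction of a glued solution to $\ot$.

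The heart of the matter is an a priori bound on $\|\m\|_{\infty,\Omega\times(0,t)}$, for $t<\min\{T^*,T\}$, that is \emph{quadratically} small in the data. Dotting \eqref{e2} with $\m$ and using $\Delta|\m|^2=2\,\m\cdot\Delta\m+2|\nabla\m|^2$ gives, with $u=|\m|^2$,
\begin{equation}\label{E:u}
\tfrac12\partial_t u-\tfrac{D^2}{2}\Delta u+D^2|\nabla\m|^2+|\m|^{2\gamma}=E^2(\m\cdot\nabla p)^2,
\end{equation}
hence $\partial_t u-D^2\Delta u\le 2E^2(\m\cdot\nabla p)^2$ in $\Omega\times(0,t)$ with $u=0$ on $\partial\Omega$ and $u(\cdot,0)=|\m_0|^2$. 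By comparison and the $L^\infty$ estimate for linear parabolic equations (Aronson--Serrin), for $r\in(\frac{N+2}{2},N]$,
\begin{equation}\label{E:par}
\|\m\|_{\infty,\Omega\times(0,t)}^2\le\|\m_0\|_{\infty,\Omega}^2+c(T)\,\|\m\cdot\nabla p\|_{L^{2r}(\Omega\times(0,t))}^2,
\end{equation}
with $c(T)$ depending only on $N,D,r,\Omega$ and a fixed power of $T$. For the pressure, write \eqref{e1} as $-\Delta p=S+\textup{div}\big[(\m\cdot\nabla p)\m\big]$, $p=0$ on $\partial\Omega$; since $2r\le 2N$, $L^{\frac{2N}{3}}(\Omega)\hookrightarrow W^{-1,2r}(\Omega)$ (for $N\ge3$), and the $L^{2r}$ estimate for the Dirichlet Laplacian on the $C^1$ domain $\Omega$ yields
\begin{equation}\label{E:ell}
\|\nabla p(\cdot,t)\|_{2r,\Omega}\le c\,\|S\|_{\frac{2N}{3},\Omega}+c\,\|\m(\cdot,t)\|_{\infty,\Omega}^2\,\|\nabla p(\cdot,t)\|_{2r,\Omega}.
\end{equation}
Whenever $c\,\|\m(\cdot,t)\|_{\infty,\Omega}^2\le\tfrac12$ the last term is absorbed, $\|\nabla p(\cdot,t)\|_{2r,\Omega}\le 2c\,\|S\|_{\frac{2N}{3},\Omega}$, and integrating in $t$ together with $|\m\cdot\nabla p|\le\|\m\|_\infty|\nabla p|$ turns \eqref{E:par} into
\begin{equation}\label{E:closed}
\|\m\|_{\infty,\Omega\times(0,t)}^2\le\|\m_0\|_{\infty,\Omega}^2+c'(T)\,\|S\|_{\frac{2N}{3},\Omega}^2\,\|\m\|_{\infty,\Omega\times(0,t)}^2,
\end{equation}
$c'(T)$ being again a fixed function of $T$.

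To legitimize the absorption I would use a continuity argument: by (D4) and parabolic regularity $t\mapsto\|\m\|_{\infty,\Omega\times(0,t)}$ is continuous and nondecreasing on $[0,\min\{T^*,T\})$ with value $\|\m_0\|_{\infty,\Omega}\le\delta$ at $t=0$, so letting $t_1$ be the largest $t\le\min\{T^*,T\}$ with $\|\m\|_{\infty,\Omega\times(0,t)}\le2\delta$ and choosing $\delta$ so that $4c\delta^2\le\tfrac12$ (making \eqref{E:ell} applicable on $[0,t_1]$) and $2c'(T)\delta^2\le\tfrac12$, estimate \eqref{E:closed} gives $\|\m\|_{\infty,\Omega\times(0,t_1)}\le\sqrt2\,\delta<2\delta$, so by continuity $t_1=\min\{T^*,T\}$. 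Hence $\|\m\|_{\infty,\Omega\times(0,t)}\le\sqrt2\,\delta$ on $[0,\min\{T^*,T\})$, and then (with $\m$ uniformly bounded, so the full $L^q$ elliptic theory applies to \eqref{e1}) $\sup_{0\le t<\min\{T^*,T\}}\|\nabla p(\cdot,t)\|_{q,\Omega}<\infty$ for every $q>1$.

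Finally suppose $T^*<T$. On $\Omega\times(0,T^*)$ the matrix $I+\m\otimes\m$ is uniformly elliptic with $\m$ bounded, so the right-hand side of \eqref{e2} is controlled in every $L^q$, and parabolic $L^q$/Schauder theory gives $\m\in C^{\alpha}(\overline\Omega\times[0,T^*])$ with norm depending only on the data and $T$; in particular $\m(\cdot,T^*)$ is H\"older continuous on $\overline\Omega$ with controlled norm. Applying Theorem~\ref{locexis} with initial time $T^*$ and datum $\m(\cdot,T^*)$ produces a weak solution with (D4) on $\Omega\times(T^*,T^*+\eta)$ for some $\eta>0$ depending only on the uniformly controlled data; gluing contradicts the maximality of $T^*$, so $T^*\ge T$. \textbf{The main obstacle} is the closed estimate \eqref{E:closed}: one must extract from the cubic term $(\m\cdot\nabla p)\nabla p$ a factor quadratically small in the data, which forces the precise coupling of the parabolic $L^\infty$ bound for $|\m|^2$ with the perturbative Calder\'on--Zygmund estimate \eqref{E:ell}, and requires $c'(T)$ to depend on $T$ only through a fixed function so that shrinking $\delta=\delta(T)$ genuinely closes the loop. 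Secondary points are the justification of comparison in \eqref{E:u} and of the continuity of $t\mapsto\|\m\|_{\infty,\Omega\times(0,t)}$ (where (D4) and the smoothness of $\partial\Omega,S,\m_0$ enter), and the borderline case $N=2$ of the embedding above, which can be treated with the logarithmic oscillation estimate \eqref{osc1} available there.
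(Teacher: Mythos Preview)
Your approach is genuinely different from the paper's. The paper does not argue by continuation; it builds the solution directly via successive approximation: set $\w_0=\m_0$, let $p_0$ solve \eqref{e1} with $\m=\m_0$, and define $(\w_k,p_k)$ iteratively, with $-\Delta p_k$ driven by $\textup{div}[(\w_{k-1}\cdot\nabla p_{k-1})\w_{k-1}]+S$ (plus correction terms) and $\w_k$ solving the parabolic problem \eqref{e2} with source $E^2(\w_{k-1}\cdot\nabla p_{k-1})\nabla p_{k-1}$. Writing $d_k=\|\w_k\|_{\infty,\ot}+\sup_t\|\nabla p_k\|_{2N,\Omega}$, Lemmas~\ref{lqe} and~\ref{lwb} yield the cubic recursion $d_k\le c(1+T^{1/(N+2)})d_{k-1}^3+cd_0$, which Lemma~\ref{small} bounds uniformly once $d_0$ is small. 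A second energy estimate on the differences $\w_k-\w_{k-1}$, $p_k-p_{k-1}$ shows they form convergent series in $L^2(0,T;W^{1,2})$ (contraction factor $cC_0^4<1$), and the limit is the desired solution. Theorem~\ref{locexis} is never invoked. What each approach buys: the paper's scheme is self-contained and sidesteps uniqueness entirely, while your route reuses the local theory and, if the gap below is filled, is arguably more conceptual.

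Your coupling of a parabolic $L^\infty$ bound for $|\m|^2$ with a perturbative Calder\'on--Zygmund estimate for $\nabla p$ is close in spirit to the paper's recursion, but the continuation step has a genuine gap: it tacitly assumes a \emph{single} (D4)-solution on $(0,T^*)$. As you defined it, $T^*$ is a supremum over \emph{all} (D4)-solutions; for each $T'<T^*$ there is \emph{some} solution on $(0,T')$, and without uniqueness these need not agree on overlaps, so there is no canonical $\m$ on $(0,T^*)$ to which your a priori bounds and the restart of Theorem~\ref{locexis} at time $T^*$ apply. To repair this you must either prove uniqueness of (D4)-solutions (a Gronwall argument using boundedness of $\m$ and of $\nabla p$ in high $L^q$; this is essentially the paper's difference estimate transplanted to two solutions rather than two iterates) or extract a limit along a sequence $T_n\nearrow T^*$ via compactness. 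Either fix is available, but neither is automatic, and the entire architecture of your argument rests on it. The $N=2$ case, which you flag, also requires separate treatment since the interval $(\frac{N+2}{2},N]$ is then empty.
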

We believe that the fact that the number $\delta$ in the theorem has
to depend on $T$ is related to the time-independence of the source term $S(x)$. If $S$ is not identically $0$, then we always have $\|S\|_{q,\Omega\times[0,\infty)}=\infty$ for any $q>1$. 
%  Also see \cite{L}.
We speculate that if the source term $S$ is a function of both time and space and $\|S\|_{q,\Omega\times[0,\infty)}$ is suitably small for some $q>1$ we may be able to prove the existence of a classical solution on $\Omega\times[0,\infty)$ \cite{L}. However, we must point out that the time-dependence of $S$ will cause \eqref{f5} to fail, and thus a new existence theorem other than the one in \cite{HMP} is needed.
%\begin{theorem}\label{main} 
%	If $N=2, S(x)\in L^2(\Omega), |\m_0|\in L^\infty(\Omega) $, for each $T>0$ there is a classical solution to \eqref{e1}-\eqref{e4}.
%\end{theorem}

%We remark that the assumption that $S(x)\in L^2(\Omega)$ in this theorem can be weakened to $S(x)\in L^{\frac{4}{3}}(\Omega)$.

Nonlinearities in partial differential equations often play a rather peculiar
role in blow-up of solutions. In this connection we would like to mention the well known Fujita phenomenon. It roughly says that for certain types of nonlinearities solutions exists globally for some data, while for some other data solutions blow up no matter how small or smooth these data are \cite{GP}. Note that Theorem \ref{larexis} is neither a global existence result nor a blow-up result. As we mentioned earlier, many regularity problems associated with \eqref{e1}-\eqref{e4} remain open.
%show
%that the latter cannot happen in our case in spite of the new mathematical features involved.
%Even though our results are not surprising, they should be viewed in terms of the  mentioned earlier.

The rest of the paper is organized as follows: In section \ref{sec2} we collect some preparatory lemmas. Here we take or refine some relevant classical results. In Section \ref{sec3} we investigate regularity and partial regularity of weak solutions. We show that $p\in L^\infty(0,T; C^\alpha(\overline{\Omega}))$ leads to H\"{o}lder continuity of $\m$. The proof of Theorem \ref{partial} is also given here. Section \ref{sec4} is devoted to the proof of Theorems \ref{locexis} and \ref{larexis}. A successive approximation scheme is employed for the second theorem. The mathematical challenge here is that one must show that the entire approximate sequence converges in a suitable sense. 
%In the last section we consider the local boundedness estimates for $p$. This is motivated by the fact that in elliptic theory local boundedness estimates often result in local H\"{o}lder continuity. At least in the case where $N=2$ we 
%can infer from an argument in \cite{Y} that 
%However, our results here only represent a modest first step.
\section{Preliminaries}\label{sec2}
In this section we prepare some background results. Some of them are well-known and some of them are a  refinement of known results so that they fit our purpose.

Our first result is an elementary inequality whose proof is contained in (\cite{O}, p. 146-148). 
\begin{lemma}\label{plap}Let $x,y$ be any two vectors in $\RN$. Then:
	\begin{enumerate}
		\item[\textup{(i)}] For $\gamma\geq 1$,
		\begin{equation*}
		\left(\left(|x|^{2\gamma-2}x-|y|^{2\gamma-2}y\right)\cdot(x-y)\right)\geq \frac{1}{2^{2\gamma-1}}|x-y|^{2\gamma};
		\end{equation*}
		\item[\textup{(ii)}] For $\frac{1}{2}<\gamma\leq 1$,
		\begin{equation*}
		\left(|x|+|y|\right)^{2-2\gamma}\left(\left(|x|^{2\gamma-2}x-|y|^{2\gamma-2}y\right)\cdot(x-y)\right)\geq (2\gamma-1)|x-y|^2.
		\end{equation*}
	\end{enumerate}
\end{lemma}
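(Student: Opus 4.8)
Both inequalities are sharp quantitative forms of the monotonicity of the vector field $F(z):=|z|^{2\gamma-2}z$ on $\RN$ (the gradient of the convex function $z\mapsto\frac{1}{2\gamma}|z|^{2\gamma}$), and the two parameter ranges call for two different but equally elementary arguments. In both cases the two sides are symmetric under $x\leftrightarrow y$ because $\big(F(x)-F(y)\big)\cdot(x-y)=\big(F(y)-F(x)\big)\cdot(y-x)$, so I may assume $|x|\ge|y|$; the cases $x=y$ and $x=y=0$ being trivial, I may also assume $|x|>0$.

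\textbf{Part (i), $\gamma\ge1$.} The plan is purely algebraic. Expanding the inner product and then inserting $x\cdot y=\tfrac12\big(|x|^2+|y|^2-|x-y|^2\big)$, one rewrites the left-hand side as
\begin{equation*}
\big(F(x)-F(y)\big)\cdot(x-y)=\tfrac12\big(|x|^2-|y|^2\big)\big(|x|^{2\gamma-2}-|y|^{2\gamma-2}\big)+\tfrac12\big(|x|^{2\gamma-2}+|y|^{2\gamma-2}\big)|x-y|^2.
\end{equation*}
Since $2\gamma-2\ge0$, the map $s\mapsto s^{2\gamma-2}$ is nondecreasing on $[0,\infty)$, so the first product is nonnegative (both of its factors have the sign of $|x|-|y|\ge0$); dropping it, together with the nonnegative term $|y|^{2\gamma-2}|x-y|^2$, leaves $\big(F(x)-F(y)\big)\cdot(x-y)\ge\tfrac12|x|^{2\gamma-2}|x-y|^2$. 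Finally, $|x-y|\le|x|+|y|\le2|x|$ gives $|x|\ge\tfrac12|x-y|$ and hence $|x|^{2\gamma-2}\ge2^{2-2\gamma}|x-y|^{2\gamma-2}$, which multiplied in yields the claim with the constant $\tfrac12\cdot2^{2-2\gamma}=2^{1-2\gamma}$.

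\textbf{Part (ii), $\tfrac12<\gamma\le1$.} Here I would argue by the fundamental theorem of calculus along the segment $\phi(t):=y+t(x-y)$, $t\in[0,1]$. Since $\gamma>\tfrac12$, $F$ is continuous on $\RN$ and is $C^1$ off the origin with $DF(z)=|z|^{2\gamma-2}I+(2\gamma-2)|z|^{2\gamma-4}z\otimes z$; assuming first that $x,y$ are linearly independent (so that $\phi$ avoids the origin), I would obtain
\begin{equation*}
\big(F(x)-F(y)\big)\cdot(x-y)=\int_0^1\Big(|\phi(t)|^{2\gamma-2}|x-y|^2+(2\gamma-2)|\phi(t)|^{2\gamma-4}\big(\phi(t)\cdot(x-y)\big)^2\Big)\,dt.
\end{equation*}
Because $2\gamma-2\le0$, the Cauchy--Schwarz bound $\big(\phi(t)\cdot(x-y)\big)^2\le|\phi(t)|^2|x-y|^2$ \emph{increases} the (nonpositive) second integrand, whence $\big(F(x)-F(y)\big)\cdot(x-y)\ge(2\gamma-1)|x-y|^2\int_0^1|\phi(t)|^{2\gamma-2}\,dt$. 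Then $|\phi(t)|\le(1-t)|y|+t|x|\le|x|+|y|$ together with $2\gamma-2\le0$ gives $|\phi(t)|^{2\gamma-2}\ge(|x|+|y|)^{2\gamma-2}$ for every $t$, so $\int_0^1|\phi(t)|^{2\gamma-2}\,dt\ge(|x|+|y|)^{2\gamma-2}$, and multiplying by $(|x|+|y|)^{2-2\gamma}$ gives the assertion. To drop the linear independence assumption I would invoke continuity of both sides in $(x,y)$ (the exponent $2-2\gamma$ is nonnegative and $F$ is continuous) together with density of linearly independent pairs --- or simply note that the displayed integrals remain absolutely convergent when $\phi$ meets the origin, since $2\gamma-2>-1$.

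\textbf{Anticipated difficulty.} The computations are elementary; the only points needing attention are keeping precisely the right terms in Part (i) so as to reach the sharp constant $2^{1-2\gamma}$, and the sign bookkeeping in Part (ii), where $2\gamma-2\le0$ makes the Cauchy--Schwarz estimate run in the opposite direction from usual, along with the harmless integrability issue when the segment $[y,x]$ passes through the origin.
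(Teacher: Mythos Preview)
Your proof is correct in both parts: the algebraic identity in Part~(i) and the line-integral argument in Part~(ii) are standard and cleanly executed, and your handling of the degenerate case where the segment meets the origin (via continuity or the integrability observation $2\gamma-2>-1$) is adequate. The paper does not actually supply its own proof of this lemma; it simply cites \cite{O}, p.~146--148, so there is no in-paper argument to compare against---your self-contained treatment is a welcome addition rather than a deviation.
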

For each $q\geq 1$ we define the Banach space $L^*_{q}(\Omega)$, where $\Omega\subseteq\RN$, by
$$L^*_{q}(\Omega)=\{f: \mbox{there is a number $c\geq 0$ such that $|\{x\in\Omega: |f(x)|\geq t\}|\leq\frac{c^q}{t^q}$ for all $t>0$} \}.
$$
The smallest $c$ such that the above inequality holds is the norm of $f$ in $L^*_{q}(\Omega)$. We easily see
\begin{eqnarray}
\|f\|_{L^*_{q}(\Omega)}&\leq &\|f\|_{L^q(\Omega)}.
\end{eqnarray}
%If $\Omega$ is bounded, then
Moreover, for each measurable subset $\Omega\subset\RN$ and each $\varepsilon\in (0, q-1]$ we have from \cite{BBC} that
\begin{eqnarray}
\int_{\Omega}|f|^{q-\varepsilon}dx&\leq &\frac{q}{\varepsilon} |\Omega|^{\frac{\varepsilon}{q}}\left(\|f\|_{L^*_{q}(\Omega)}\right)^{q-\varepsilon}. \label{wlp}
\end{eqnarray}

The next two lemmas deal with sequences of nonnegative numbers
which satisfy certain recursive inequalities.
\begin{lemma}\label{ynb}
	Let $\{y_n\}, n=0,1,2,\cdots$, be a sequence of positive numbers satisfying the recursive inequalities
	\begin{equation*}
	y_{n+1}\leq cb^ny_n^{1+\alpha}\ \ \textup{for some $b>1, c, \alpha\in (0,\infty)$.}
	\end{equation*}
	If
	\begin{equation*}
	y_0\leq c^{-\frac{1}{\alpha}}b^{-\frac{1}{\alpha^2}},
	\end{equation*}
	then $\lim_{n\rightarrow\infty}y_n=0$.
\end{lemma}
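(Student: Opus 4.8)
The plan is to prove, by induction on $n$, the quantitative decay estimate
\[
y_n \le b^{-n/\alpha}\, y_0 \qquad \text{for every } n\ge 0 .
\]
Once this is established the conclusion follows at once: since $b>1$ and $\alpha>0$ we have $b^{-n/\alpha}\to 0$, hence $y_n\to 0$.

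The base case $n=0$ is the trivial inequality $y_0\le y_0$. For the inductive step, assume $y_n\le b^{-n/\alpha}y_0$. Substituting this into the recursive hypothesis and collecting the powers of $b$ (using the identity $n-\tfrac{(1+\alpha)n}{\alpha}=-\tfrac{n}{\alpha}$), one obtains
\[
y_{n+1}\le c\,b^{n}y_n^{1+\alpha}\le c\,b^{n}\bigl(b^{-n/\alpha}y_0\bigr)^{1+\alpha}=c\,y_0^{\alpha}\cdot y_0\cdot b^{-n/\alpha}.
\]
To reach the desired bound $y_{n+1}\le b^{-(n+1)/\alpha}y_0$ it therefore suffices to have $c\,y_0^{\alpha}\le b^{-1/\alpha}$, i.e. $y_0\le c^{-1/\alpha}b^{-1/\alpha^{2}}$, which is exactly the smallness hypothesis imposed on $y_0$. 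This closes the induction and completes the argument.

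There is no genuine obstacle here; this is a standard iteration lemma of the type used pervasively in De Giorgi--Nash--Moser theory. The only points requiring a modicum of care are choosing the correct ansatz for the induction — namely that the right geometric rate is $b^{-1/\alpha}$, which more than compensates the growth factor $b^{n}$ — and keeping the exponents of $b$ straight in the computation above. It is worth noting that the hypothesis on $y_0$ is sharp for this scheme, in the sense that it is precisely the condition needed to make the inductive step propagate.
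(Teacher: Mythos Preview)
Your proof is correct and follows essentially the same iteration idea as the paper. The only minor difference is that the paper unrolls the recursion completely to obtain the sharper closed-form bound
\[
y_{n+1}\le c^{-1/\alpha}b^{-1/\alpha^{2}}\bigl(c^{1/\alpha}b^{1/\alpha^{2}}y_0\bigr)^{(1+\alpha)^{n+1}}b^{-(n+1)/\alpha},
\]
which exhibits the super-exponential decay, whereas your inductive ansatz $y_n\le b^{-n/\alpha}y_0$ yields only geometric decay; either suffices for the conclusion.
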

This lemma is well-known. See, e.g., (\cite{D}, p.12). Here we give a brief proof. We can easily show
\begin{eqnarray}
y_{n+1}&\leq& c^{\frac{(1+\alpha)^{n+1}-1}{\alpha}}b^{\frac{(1+\alpha)^{n+1}-1}{\alpha^2}-\frac{n+1}{\alpha}}y_0^{(1+\alpha)^{n+1}}\nonumber\\
&=&c^{-\frac{1}{\alpha}}b^{-\frac{1}{\alpha^2}}\left(c^{\frac{1}{\alpha}}b^{\frac{1}{\alpha^2}}y_0\right)^{(1+\alpha)^{n+1}}b^{-\frac{n+1}{\alpha}}.
\end{eqnarray}
Therefore, if $b>1$ and $c^{\frac{1}{\alpha}}b^{\frac{1}{\alpha^2}}y_0\leq 1$, then we have that $\lim_{n\rightarrow\infty}y_n=0$.
%Our  global existence  assertion in the case of small initial %data relies on the following lemma.

\begin{lemma}\label{small}
	Let $\alpha,\lambda\in (0,\infty)$ be given and $\{b_k\}$ a sequence of nonnegative numbers with the property
	\begin{equation*}
	b_k\leq b_0+\lambda b_{k-1}^{1+\alpha}\ \ \textup{for $k=1,2,\cdots.$}
	\end{equation*}
	If $2\lambda(2b_0)^\alpha<1$, then
	\begin{equation*}
	b_k\leq \frac{b_0}{1-\lambda(2b_0)^\alpha}\leq 2b_0\ \ \textup{for all $k\geq 0$.}
	\end{equation*}
\end{lemma}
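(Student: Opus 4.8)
The plan is to establish the bound $b_k\le M$ for every $k\ge 0$ by induction on $k$, where
$M:=\dfrac{b_0}{1-\lambda(2b_0)^\alpha}$ is precisely the quantity on the right-hand side of the claimed inequality. First I would record what the hypothesis $2\lambda(2b_0)^\alpha<1$ buys us: it forces $\tfrac12<1-\lambda(2b_0)^\alpha\le 1$, so $M$ is well-defined and finite, and moreover
\[
b_0\le M<2b_0,
\]
the second inequality being equivalent to $2\lambda(2b_0)^\alpha\le 1$ after clearing denominators. (If $b_0=0$ the statement is vacuous, since the recursion then forces $b_k=0$ for all $k$; so we may assume $b_0>0$.)

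For the base case $k=0$ there is nothing to prove, as $M\ge b_0$. For the inductive step, assume $b_{k-1}\le M$. Then in particular $b_{k-1}\le 2b_0$, and feeding this into the recursion,
\[
b_k\le b_0+\lambda b_{k-1}^{1+\alpha}=b_0+\lambda b_{k-1}^{\alpha}\,b_{k-1}\le b_0+\lambda(2b_0)^{\alpha}M.
\]
The one point that needs a moment's thought is that $M$ has been chosen exactly as the fixed point of the affine map $t\mapsto b_0+\lambda(2b_0)^{\alpha}t$: the equation $b_0+\lambda(2b_0)^\alpha M=M$ rearranges to $b_0=M\bigl(1-\lambda(2b_0)^\alpha\bigr)$, which is just the definition of $M$. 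Hence the last display gives $b_k\le M$, closing the induction.

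Combining the two parts yields $b_k\le M\le 2b_0$ for all $k\ge 0$, which is the assertion. I do not expect any genuine obstacle here; the only nonroutine ingredient is recognizing $M$ as the fixed point of the associated affine contraction, after which the induction is automatic. (One could instead iterate the recursion explicitly and estimate the resulting finite sum by a geometric series, but the fixed-point bootstrap is shorter and avoids the bookkeeping.)
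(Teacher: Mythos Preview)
Your proof is correct and follows exactly the approach the paper indicates: the paper simply remarks that the lemma ``can easily be established via induction,'' and your argument carries out precisely that induction, with the natural choice of $M$ as the fixed point of $t\mapsto b_0+\lambda(2b_0)^\alpha t$.
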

This lemma can easily be established via induction.
\begin{lemma}\label{hcon}
	Let $\Omega$ be a bounded domain in $\RN$ with Lipschitz boundary $\partial\Omega$. Assume that $w$ is a weak solution of the initial boundary value problem
	\begin{eqnarray*}
		\partial_tw-D^2\Delta w&=&\textup{div}\f+ g_0\ \ \textup{in $\ot$,}\\ 
		w&=&0\ \ \textup{on $\Sigma_T$,}\\
		w(x,0)&=& w_0(x),
	\end{eqnarray*}
	where $w_0$ is H\"{o}lder continuous on $\overline{\Omega}$ and
	$|\f|^2, g_0\in L^q(\ot)$ for some $q>1+\frac{N}{2}$. Then $w$ is H\"{o}lder continuous on $\overline{\ot}$. That is, there is
	a number $\beta\in (0, 1)$ such that
	\begin{equation*}
	\sup_{(x_1,t_1),\  (x_2,t_2)\in \ot}\frac{|w(x_1,t_1)-w(x_2,t_2)|}{\left(|x_1-x_2|+|t_1-t_2|^{\frac{1}{2}}\right)^\beta}<\infty.
	\end{equation*}
\end{lemma}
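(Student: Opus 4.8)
The plan is to reduce this to the classical parabolic De Giorgi--Nash--Moser theory together with a standard local-to-global argument. The key observation is that the inhomogeneous heat equation $\partial_t w - D^2\Delta w = \textup{div}\,\f + g_0$ with data in $L^q(\ot)$ for $q > 1 + \frac{N}{2}$ falls within the scope of the classical theory of linear parabolic equations in divergence form with measurable coefficients: after absorbing the constant $D^2$ by a rescaling of the time variable, the operator becomes $\partial_t - \Delta$, which is the model case. First I would recall that the interior H\"older estimate follows from the parabolic analog of the Stampacchia/De Giorgi estimates for subsolutions and supersolutions; the precise statement I would invoke is that of Ladyzhenskaya--Solonnikov--Ural'tseva (\cite{LSU}, Chapter III) or the more modern treatment in Lieberman, which gives that any weak solution $w$ is locally H\"older continuous in $\ot$ with an estimate
\begin{equation*}
[w]_{C^\beta(Q_{r/2}(z))}\leq c\left(r^{-\mu}\|w\|_{L^2(Q_r(z))}+\| |\f|^2\|_{L^q(Q_r(z))}^{1/2}+\|g_0\|_{L^q(Q_r(z))}\right)
\end{equation*}
for interior cylinders, where the exponent $\beta$ and the constant $c$ depend only on $N$, $q$, and $D$. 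The condition $q > 1 + \frac N2$ is exactly what is needed here: it guarantees that the right-hand side $\textup{div}\,\f + g_0$ lies in a Morrey-type space forcing the Campanato/Morrey characterization of H\"older continuity to apply.

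The second ingredient is boundary regularity. Since $w = 0$ on $\Sigma_T$ and $\partial\Omega$ is Lipschitz, one can extend $w$ by zero across the lateral boundary (or, in the usual way, compare with the solution of the boundary-value problem and use barriers), and the classical boundary De Giorgi estimate then yields H\"older continuity up to $\Sigma_T$ — again with exponent and constant depending only on the data and the Lipschitz character of $\partial\Omega$. For the initial slice $t = 0$: here is where the hypothesis that $w_0$ is H\"older continuous on $\overline\Omega$ enters. I would subtract off a suitable extension $\widetilde{w_0}$ of $w_0$ (for instance the solution of the stationary problem $-D^2\Delta \widetilde w_0 = 0$ with boundary data $w_0$, which is H\"older continuous by elliptic theory, or simply a H\"older-continuous extension of $w_0$ to $\overline\Omega\times[0,T]$), so that $v := w - \widetilde{w_0}$ solves a parabolic equation of the same type with zero initial data and a modified — but still $L^q$ — right-hand side; then the initial-value De Giorgi estimate for solutions with zero initial data gives H\"older continuity of $v$ near $t = 0$, and adding back $\widetilde{w_0}$ recovers it for $w$.

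Finally I would patch the interior, lateral-boundary, and initial-time estimates together by a finite covering of the compact set $\overline{\ot}$, noting that the H\"older seminorms are controlled on a uniform family of parabolic cylinders (or half-cylinders near the parabolic boundary), and that the global norm $\|w\|_{L^2(\ot)}$ together with $\| |\f|^2\|_{L^q(\ot)}$ and $\|g_0\|_{L^q(\ot)}$ dominates every local contribution. The existence of $\|w\|_{L^2(\ot)}$ (indeed much more) is guaranteed by the standard energy estimate for this linear problem. The main obstacle — really the only nonroutine point — is making sure the boundary and initial H\"older estimates are genuinely available under the stated minimal regularity (Lipschitz $\partial\Omega$, merely H\"older $w_0$), rather than assuming extra smoothness; this is handled by the De Giorgi-type arguments, which are insensitive to the smoothness of the lateral boundary beyond a measure-density (corkscrew) condition, automatically satisfied by Lipschitz domains, and by the reduction to zero initial data described above. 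The rest is bookkeeping with the Morrey/Campanato embeddings that convert the $L^q$ integrability of the forcing terms, $q > 1 + \frac N2$, into the asserted H\"older exponent $\beta \in (0,1)$.
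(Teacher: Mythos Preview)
Your proposal is correct and outlines precisely the classical route (parabolic De Giorgi--Nash--Moser interior estimates, boundary and initial-time estimates via comparison/reduction, then a covering argument) that underlies the result. The paper itself does not supply a proof: it simply states that the lemma ``is well-known, and it can be found, for example, in \cite{LSU},'' so your sketch is in fact more detailed than what the paper provides and is fully consistent with the cited source.
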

This result is well-known, and it can be found, for example, in \cite{LSU}. Next, we cite a result from (\cite{R}, p.82).
\begin{lemma}\label{lqe}Let (H6) hold and assume
	\begin{enumerate}
		\item[\textup{(L1)}] $A(x)$ is an $N\times N$ matrix whose entries are
		continuous functions on $\overline{\Omega}$, satisfying the uniform
		ellipticity condition
		\begin{equation*}
		\lambda|\xi|^2\leq (A(x)\xi\cdot\xi)\leq\frac{1}{\lambda}\ \ \textup{on $ \Omega$ for some $\lambda>0$ .}
		%\item[(L2)] description
		\end{equation*}
	\end{enumerate}
	%Let 
	If $u$ is a weak solution to 
	%Consider 
	the boundary value problem
	\begin{eqnarray*}
		-\textup{div}\left(A\nabla u\right)&=&\textup{div}\f+g_0\ \ \textup{in $\Omega$,}\\
		u&=&0\ \ \textup{on $\Omega$,}
	\end{eqnarray*}
	then for each $q>1$ there is a positive $c=c(N, q, \Omega)$ with the property
	\begin{equation*}
	\|\nabla u\|_{q,\Omega}\leq c\left(\|\f\|_{q,\Omega}+\|g_0\|_{\frac{Nq}{N+q},\Omega}\right).
	\end{equation*}
\end{lemma}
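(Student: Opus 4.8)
The plan is to establish this Calder\'on--Zygmund type gradient estimate by freezing the coefficients of the principal part. Two structural facts drive the argument: condition \textup{(L1)} is invariant under replacing $A$ by $A^{T}$, so a duality argument reduces everything to the range $q\ge 2$ with a purely divergence-form right-hand side; and $A$ is merely continuous, which is exactly the regularity needed to make the constant-coefficient comparison quantitative. Throughout, $\omega$ denotes the modulus of continuity of $A$ on $\overline{\Omega}$, and $c$ is allowed to depend on $\lambda$ and $\omega$.

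\emph{Step 1 (duality reduction to $q\ge2$, vanishing scalar term).} The matrix $A^{T}$ also satisfies \textup{(L1)} with the same $\lambda$ and $\omega$. Given $\mathbf h\in L^{q'}(\Omega)^{N}$, let $w\in W^{1,2}_0(\Omega)$ solve $-\mathrm{div}(A^{T}\nabla w)=\mathrm{div}\,\mathbf h$ with $w=0$ on $\partial\Omega$. Testing the equation for $u$ with $w$ and that for $w$ with $u$, and using the symmetry $\int_\Omega (A^{T}\nabla w)\cdot\nabla u\,dx=\int_\Omega(A\nabla u)\cdot\nabla w\,dx$, gives
\[
\int_\Omega \mathbf h\cdot\nabla u\,dx=\int_\Omega \f\cdot\nabla w\,dx-\int_\Omega g_0\,w\,dx ,
\]
where $\f,g_0$ are the data in the equation for $u$. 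Since $\|w\|_{(q')^{*},\Omega}\le c\|\nabla w\|_{q',\Omega}$ and the conjugate exponent of $(q')^{*}$ is $\frac{Nq}{N+q}$, this yields $\bigl|\int_\Omega\mathbf h\cdot\nabla u\,dx\bigr|\le\bigl(\|\f\|_{q,\Omega}+c\|g_0\|_{\frac{Nq}{N+q},\Omega}\bigr)\|\nabla w\|_{q',\Omega}$; taking the supremum over $\|\mathbf h\|_{q',\Omega}\le1$, the estimate for the exponent $q<2$ and operator $A$ follows once it is established for the exponent $q'>2$ and operator $A^{T}$ with no scalar term. It therefore suffices to prove $\|\nabla u\|_{q,\Omega}\le c\|\f\|_{q,\Omega}$ for $q\ge2$, $g_0\equiv0$; and since $q\ge2$ forces $\frac{Nq}{N+q}\ge\frac{2N}{N+2}\ge1$, any scalar term $g_0$ can be written as $\mathrm{div}\,\mathbf h_0$ with $\|\mathbf h_0\|_{q,\Omega}\le c\|g_0\|_{\frac{Nq}{N+q},\Omega}$ via the dual Sobolev embedding $L^{\frac{Nq}{N+q}}(\Omega)\hookrightarrow W^{-1,q}(\Omega)$ and folded into $\f$.

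\emph{Step 2 (freezing, localization, absorption).} Fix $q\ge2$ and assume first that $A,\f,u$ are smooth. Rewrite the equation as $-\mathrm{div}(A(x_0)\nabla u)=\mathrm{div}\bigl(\f+(A(x_0)-A(x))\nabla u\bigr)$. The solution operator $\mathbf h\mapsto\nabla v$ of the constant-coefficient model $-\mathrm{div}(A(x_0)\nabla v)=\mathrm{div}\,\mathbf h$ --- on $\RN$, and on a half-space with zero Dirichlet data after flattening $\partial\Omega$ by a $C^{1}$ change of variables (which keeps the transformed coefficients continuous and uniformly elliptic) followed by an even reflection --- is a matrix of Calder\'on--Zygmund operators with smooth symbols homogeneous of degree $0$, hence bounded on $L^{q}$ for every $q\in(1,\infty)$. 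Applying this bound to $\zeta u$, where $\zeta$ is a cutoff with $\zeta\equiv1$ on $B_{\rho/2}(x_0)$ and $\mathrm{supp}\,\zeta\subset B_{\rho}(x_0)$, and rewriting the first-order commutators produced by $\zeta$ as divergences of $u$ times smooth vector fields (plus zeroth-order terms), one obtains a local estimate of the form
\[
\|\nabla u\|_{q,\,B_{\rho/2}(x_0)\cap\Omega}\le c\,\omega(\rho)\,\|\nabla u\|_{q,\,B_{\rho}(x_0)\cap\Omega}+c_{\rho}\bigl(\|\f\|_{q,\,B_{\rho}(x_0)\cap\Omega}+\|u\|_{q,\,B_{\rho}(x_0)\cap\Omega}\bigr),
\]
in which the constant $c$ multiplying $\|\nabla u\|$ on the right is independent of $\rho$. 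Choosing $\rho$ so small, by uniform continuity of $A$, that this term has coefficient $\le\frac12$, then covering $\overline{\Omega}$ by finitely many balls $B_{\rho/2}(x_i)$ of bounded overlap and summing $q$-th powers, the first term on the right is absorbed and one arrives at $\|\nabla u\|_{q,\Omega}\le c\|\f\|_{q,\Omega}+c\|u\|_{q,\Omega}$.

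\emph{Step 3 (removing the lower-order term; weak solutions).} Testing the equation with $u$ gives $\|\nabla u\|_{2,\Omega}\le\lambda^{-1}\|\f\|_{2,\Omega}$, whence $\|u\|_{q,\Omega}\le c\|\nabla u\|_{2,\Omega}\le c\|\f\|_{2,\Omega}\le c\|\f\|_{q,\Omega}$ in the range where $W^{1,2}_0(\Omega)\hookrightarrow L^q(\Omega)$; this establishes the estimate there, and a finite iteration in the exponent based on $W^{1,q}_0(\Omega)\hookrightarrow L^{\frac{Nq}{N-q}}(\Omega)$ (which raises the exponent by a fixed factor each time) removes $\|u\|_{q,\Omega}$ for every finite $q$. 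Finally, for a general weak solution one approximates $A$ by smooth uniformly elliptic matrices with the same modulus of continuity, and $\f,g_0$ by smooth data, applies the estimate just obtained to the approximate solutions, and passes to the limit using weak compactness in $W^{1,q}_0(\Omega)$ and lower semicontinuity of the norm.

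\emph{Main obstacle.} The delicate part is Step 2, and particularly its boundary version: since $\partial\Omega$ is only $C^{1}$, the straightened coefficients are merely continuous --- exactly the threshold at which freezing still functions --- so one must (i) confirm the $L^{q}$-boundedness of the half-space model operator with the zero Dirichlet condition, (ii) organize the cutoff commutators so that the single right-hand term carrying $\|\nabla u\|_{q}$ has the small coefficient $\omega(\rho)$ with a $\rho$-independent constant, and (iii) check that the finitely many local estimates genuinely sum to a global bound with absorbable error. The endpoint and small-$q$ cases, where $\frac{Nq}{N+q}$ decreases to $1$, need the weak-Lebesgue and Lorentz-space (relative rearrangement) refinements underlying the cited reference; the exponent iteration of Step 3 and the approximation argument are then routine.
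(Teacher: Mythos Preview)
The paper does not prove this lemma; it simply cites it from (\cite{R}, p.\,82). Your sketch is the standard freezing-of-coefficients / Calder\'on--Zygmund argument that such references carry out, and it is correct in outline. One small slip: for the Dirichlet half-space model the appropriate extension is \emph{odd} reflection of $u$ across the flat boundary, not even (even reflection preserves Neumann data, not the zero trace). You are also right to let the constant depend on the ellipticity bound $\lambda$ and the modulus of continuity $\omega$ of $A$; the paper's stated dependence $c=c(N,q,\Omega)$ is an oversight, since the estimate genuinely depends on $A$ through $\lambda$ and $\omega$. In the paper's applications $A=I+\m\otimes\m$ with $\m$ bounded and H\"older continuous, so these quantities are controlled by data already tracked and the imprecision is harmless.
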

We can easily infer from the preceding two lemmas that (D4) can be replaced by
\begin{enumerate}
	\item[(D4)$^\prime$] $\|\m\|_{\infty,\ot}<\infty$ and there is a $q>1+\frac{N}{2}$ such that $\sup_{0\leq t\leq T}\|\nabla p\|_{2q,\Omega}<\infty$.
\end{enumerate}
\begin{lemma}\label{lwb} Let $\w$ be a weak solution of
	%We consider 
	the initial boundary value problem
	\begin{eqnarray}
	\partial_t\w-D^2\Delta\w+|\w|^{2(\gamma-1)}\w&=&\f\ \ \textup{in $\ot$,}\label{wb1}\\
	\w&=&0\ \ \textup{on $\Sigma_T$,}\\
	\w(x,0)&=&\m_0(x)\ \ \textup{on $\Omega$.}
	\end{eqnarray}
	Then 
	%for each $q>1+\frac{N}{2}$ 
	there is a positive number $c=c(N)$ such that
	\begin{equation*}
	\|\w\|_{\infty,\ot}\leq c\left(\|\m_0\|_{\infty,\Omega}+|\ot|^{\frac{1}{N+2}}\sup_{0\leq t\leq T}\|\f\|_{N,\Omega}\right).
	\end{equation*}
\end{lemma}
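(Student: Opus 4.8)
The plan is to run a De Giorgi--Stampacchia truncation argument, exploiting that the absorption term $|\w|^{2(\gamma-1)}\w$ carries a favorable sign and may simply be discarded. First I would reduce the vector problem to a scalar differential inequality for $v:=|\w|$: for $k>0$, testing \eqref{wb1} (scalar-multiplying the $i$th equation by the $i$th component) with the truncated vector field $\w\bigl(1-k/|\w|\bigr)^{+}$, defined to vanish where $|\w|\le k$ — admissible after the usual Steklov averaging in time, since $\partial_{t}\w\in L^{2}(\Omega_T)$ — and using
\begin{equation*}
|\w|^{2(\gamma-1)}\w\cdot\w\Bigl(1-\frac{k}{|\w|}\Bigr)^{+}=|\w|^{2\gamma}\Bigl(1-\frac{k}{|\w|}\Bigr)^{+}\ge 0\qquad\text{and}\qquad|\nabla\w|^{2}\ge\bigl|\nabla|\w|\bigr|^{2},
\end{equation*}
one obtains, with $v_{k}:=(v-k)^{+}$,
\begin{equation*}
\frac{d}{dt}\,\frac12\int_{\Omega}v_{k}^{2}\,dx+D^{2}\int_{\Omega}\bigl|\nabla v_{k}\bigr|^{2}\,dx\le\int_{\Omega}|\f|\,v_{k}\,dx,\qquad v_{k}=0\ \text{on }\Sigma_{T},\quad v_{k}(\cdot,0)=(|\m_{0}|-k)^{+}
\end{equation*}
(equivalently, this is Kato's inequality for $|\w|$ with the nonnegative absorption term thrown away).

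Then I would run the iteration. Fix $k\ge k_{0}:=\|\m_{0}\|_{\infty,\Omega}$, so the initial term vanishes; integrating in time and writing $Y_{k}:=\sup_{0\le t\le T}\|v_{k}(t)\|_{L^{2}(\Omega)}^{2}+D^{2}\|\nabla v_{k}\|_{L^{2}(\Omega_T)}^{2}$ and $A_{k}:=\{(x,t)\in\Omega_T:|\w|>k\}\subseteq\Omega_T$, I would estimate the right-hand side by Hölder in space \emph{first} and then in time,
\begin{equation*}
\int_{\Omega}|\f|\,v_{k}\,dx\le\|\f(t)\|_{L^{N}(\Omega)}\|v_{k}(t)\|_{L^{N/(N-1)}(\Omega)}\le c\,\|\f(t)\|_{L^{N}(\Omega)}\|\nabla v_{k}(t)\|_{L^{2}(\Omega)}\,|A_{k}(t)|^{1/2},
\end{equation*}
using the Sobolev embedding and the identity $\tfrac{N-1}{N}-\tfrac{N-2}{2N}=\tfrac12$ (for $N=2$ the two-dimensional Ladyzhenskaya inequality $\|u\|_{L^{4}(\Omega)}^{2}\le c\|u\|_{L^{2}(\Omega)}\|\nabla u\|_{L^{2}(\Omega)}$ yields the same bound). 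Integrating in $t$, Cauchy--Schwarz, and $\|\nabla v_{k}\|_{L^{2}(\Omega_T)}^{2}\le D^{-2}Y_{k}$ then give $Y_{k}\le c(N)D^{-2}\bigl(\sup_{0\le t\le T}\|\f\|_{N,\Omega}\bigr)^{2}|A_{k}|$; combining this with the Chebyshev / parabolic-Sobolev bound $(k-h)^{2}|A_{k}|\le\int_{\Omega_T}v_{h}^{2}\le c(N)(1+D^{-2})Y_{h}\,|A_{h}|^{2/(N+2)}$ for $h<k$ yields the recursion
\begin{equation*}
|A_{k}|\le\frac{c(N,D)\bigl(\sup_{0\le t\le T}\|\f\|_{N,\Omega}\bigr)^{2}}{(k-h)^{2}}\,|A_{h}|^{\,1+\frac{2}{N+2}},\qquad k_{0}\le h<k.
\end{equation*}
Since $1+\tfrac{2}{N+2}>1$ in every dimension, the continuous form of Stampacchia's lemma — obtained from Lemma \ref{ynb} by iterating along $k_{n}=k_{0}+d(1-2^{-n})$ — applies to $\phi(k):=|A_{k}|$ and forces $|A_{k_{0}+d}|=0$ with $d= c(N,D)\,|A_{k_{0}}|^{1/(N+2)}\sup_{0\le t\le T}\|\f\|_{N,\Omega}\le c(N,D)\,|\Omega_T|^{1/(N+2)}\sup_{0\le t\le T}\|\f\|_{N,\Omega}$, because $A_{k_{0}}\subseteq\Omega_T$. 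Hence $\|\w\|_{\infty,\Omega_T}\le k_{0}+d$, which is the asserted estimate (with the fixed diffusion constant $D$ absorbed into $c$).

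The step I expect to be the main obstacle is the exponent bookkeeping in the iteration. The datum $\f\in L^{\infty}(0,T;L^{N}(\Omega))$ lies only barely inside the Aronson--Serrin range for boundedness (the mixed-norm exponents satisfy $\tfrac{N}{2N}+0=\tfrac12<1$), and a careless space--time Hölder split with a single exponent merely reproduces the critical power $|A_{h}|^{1}$, so the recursion fails to close — already visible at $N=2$. Doing Hölder in space first (respectively invoking the two-dimensional Ladyzhenskaya inequality) is precisely what supplies the gain $\tfrac{2}{N+2}>0$; the remaining ingredients — admissibility of the truncated test function via Steklov averaging, the parabolic Sobolev embedding, and Lemma \ref{ynb} — are routine.
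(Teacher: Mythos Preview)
Your argument is correct and follows the same De Giorgi--Stampacchia strategy as the paper's proof, but with a different choice of truncation. The paper tests \eqref{wb1} with $(|\w|^{2}-k_{n})^{+}\w$, so the iteration runs on level sets of $|\w|^{2}$ and the recursion has exponent $1+\tfrac{2}{N}$; at the end the paper is left with $\sup_{t}\|(\f\cdot\w)\|_{N,\Omega}$, which it splits as $\|\w\|_{\infty}\sup_{t}\|\f\|_{N,\Omega}$ and absorbs via a Young inequality (the step around \eqref{9w1}). Your Kato-type test function $\w\bigl(1-k/|\w|\bigr)^{+}$ reduces directly to the scalar inequality for $v_{k}=(|\w|-k)^{+}$, the recursion has exponent $1+\tfrac{2}{N+2}$, and the right-hand side is already $|\f|\,v_{k}$, so no extraction step is needed at the end. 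Both routes land on the same bound; yours is marginally cleaner, while the paper's test function is slightly easier to justify as an admissible multiplier (no $1/|\w|$ to worry about).

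One small remark on the $N=2$ case, where you invoke Ladyzhenskaya: the inequality $\|v_{k}\|_{2}\le c\,\|\nabla v_{k}\|_{2}\,|A_{k}(t)|^{1/2}$ is not immediate from the Sobolev embedding (which is critical in two dimensions), but it does follow from $\|v_{k}\|_{2}\le\|v_{k}\|_{4}\,|A_{k}(t)|^{1/4}$ combined with $\|v_{k}\|_{4}\le c\,\|v_{k}\|_{2}^{1/2}\|\nabla v_{k}\|_{2}^{1/2}$ and then dividing through by $\|v_{k}\|_{2}^{1/2}$. You might make that one-line computation explicit, since as written ``yields the same bound'' hides the self-improvement step.
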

\begin{proof} Even though \eqref{wb1} is a system, the classical method due to De Giorgi is still applicable. 
	%This result is well known. 
	Here we give an outline of the proof.
	%, using .
	%Denote by $w_i$ (resp. $(\f\cdot\w)$) the i-th component of
	%$\w$ (resp. $\f$). Then we have 
	%\begin{equation}\label{wb1}
	%	\partial_t w_i-D^2\Delta w_i+|\w|^{2(\gamma-1)} w_i=(\f\cdot\w)\ \ \textup{in $\ot$.}
	%	\end{equation}
	%	Without loss of generality, we assume that
	%	\begin{equation*}
	%	\sup_{\ot} w_i=\sup_{\ot} |w_i|.
	%	\end{equation*}
	Set
	\begin{equation}\label{initial}
	M=\||\m_0|^2\|_{\infty, \Omega}.
	\end{equation}
	Then define
	\begin{equation*}
	k_n=k-\frac{k}{2^n}+M, \ \ \ n=0,1,2,\cdots,
	\end{equation*}
	where $k>0$ is a number to be determined. Let
	\begin{equation*}
	A_n(t)=\{x\in\Omega: |\w|^2(x,t)\geq k_n \}.
	\end{equation*} 
	Without loss of generality,  assume $N>2$. Use $(|\w|^2-k_{n})^+\w$ as a test function in \eqref{wb1} to derive, with the aid of the Gagliardo-Nirenberg-Sobolev inequality, that
	\begin{eqnarray}
	\lefteqn{\frac{1}{4}\frac{d}{dt}\io\left[(|\w|^2-k_{n})^+\right]^2dx+D^2\io(|\w|^2-k_{n})^+\left|\nabla\w\right|^2dx}\nonumber\\
	&&+\frac{D^2}{2}\io |\nabla(|\w|^2-k_{n})^+|^2dx\nonumber\\
	&\leq& \io (\f\cdot\w)(|\w|^2-k_{n})^+dx\nonumber\\
	&\leq&\|(|\w|^2-k_{n})^+\|_{\frac{2N}{N-2}}\|(\f\cdot\w)\chi_{A_{n}(t)}\|_{\frac{2N}{N+2}}\nonumber\\
	&\leq&c\|\nabla(|\w|^2-k_{n})^+\|_{2}\|(\f\cdot\w)\chi_{A_{n}(t)}\|_{\frac{2N}{N+2}}\nonumber\\
	&\leq&	\frac{D^2}{4}\io\left|\nabla(|\w|^2-k_{n})^+\right|^2dx+c\|(\f\cdot\w)\|_{N,\Omega}^2|A_{n}(t)|,
	\end{eqnarray}
	from whence follows
	\begin{eqnarray}
	\lefteqn{\sup_{0\leq t\leq T}\io\left[(|\w|^2-k_{n})^+\right]^2dx+\ioT\left|\nabla(|\w|^2-k_{n})^+\right|^2dxd\tau}\nonumber\\
	&\leq &c\sup_{0\leq t\leq T}\|(\f\cdot\w)\|_{N,\Omega}^2\int_{0}^{T}|A_{n}(t)|dt.
	\end{eqnarray}
	Use the Gagliardo-Nirenberg-Sobolev inequality again to obtain
	\begin{eqnarray}
	\lefteqn{\ioT\left[(|\w|^2-k_{n})^+\right]^{2\frac{N+2}{N}}dxdt}\nonumber\\
	&=&\int_{0}^{T}\io\left[(|\w|^2-k_{n})^+\right]^2\left[(|\w|^2-k_{n})^+\right]^{\frac{4}{N}}dxdt\nonumber\\
	&\leq&\int_{0}^{T}\left(\io\left[(|\w|^2-k_{n})^+\right]^{\frac{2N}{N-2}}\right)^{\frac{N-2}{N}}\left(\io\left[(|\w|^2-k_{n})^+\right]^{2}\right)^{\frac{2}{N}}dt\nonumber\\
	&\leq& c\int_{0}^{T}\io\left|\nabla(|\w|^2-k_{n})^+\right|^2dxdt\left(\sup_{0\leq t\leq T}\io\left[(|\w|^2-k_{n})^+\right]^{2}\right)^{\frac{2}{N}}\nonumber\\
	&\leq&c\sup_{0\leq t\leq T}\|(\f\cdot\w)\|_{N,\Omega}^{2+\frac{4}{N}}\left(\int_{0}^{T}|A_{n}(t)|dt\right)^{1+\frac{2}{N}}.\label{wb11}
	%\left(\ioT (\f\cdot\w)(|\w|^2-k_{n})^+dxdt\right)^{1+\frac{2}{N}}\nonumber\\
	%	&\leq &c\left(\int_{\{|\w|^2\geq k_{n}\}\cap\ot}|(\f\cdot\w)|^{\frac{2(N+2)}{N+4}} dxdt\right)^{\frac{N+4}{2N}}\nonumber\\
	%	&&\cdot\left(\ioT\left[(|\w|^2-k_{n})^+\right]^{2\frac{N+2}{N}}dxdt\right)^{\frac{1}{2}},
	\end{eqnarray}
	%	from whence follows
	Set
	\begin{equation*}
	y_n=\int_{0}^{T}|A_{n}(t)|dt=|\{(x,t)\in \ot: |\w|^2(x,t)\geq k_n\}|.
	\end{equation*}
	We can easily show that 
	\begin{eqnarray}
	y_{n+1}=\int_{0}^{T}|A_{n+1}(t)|dt&\leq& \frac{4^{\frac{N+2}{N}(n+1)}}{k^{\frac{2(N+2)}{N}}}\ioT \left[(|\w|^2-k_{n})^+\right]^{2\frac{N+2}{N}}dxdt\nonumber\\
	&\leq& \frac{c4^{\frac{N+2}{N}(n+1)}\sup_{0\leq t\leq T}\|(\f\cdot\w)\|_{N,\Omega}^{2+\frac{4}{N}}}{k^{\frac{2(N+2)}{N}}}y_n^{1+\frac{2}{N}}.
	\end{eqnarray}
	This puts us in a position to apply Lemma \ref{ynb}. Upon doing so, we arrive at
	\begin{equation}\label{9w1}
	\||\w|^2\|_{\infty, \Omega}\leq k+M,
	\end{equation}
	provided that 
	\begin{eqnarray}
	k&=&cy_0^{\frac{1}{N+2}}\sup_{0\leq t\leq T}\|(\f\cdot\w)\|_{N,\Omega}\nonumber\\
	&\leq &cy_0^{\frac{1}{N+2}}\||\w|\|_{\infty, \Omega}\sup_{0\leq t\leq T}\|\f\|_{N,\Omega}\nonumber\\
	&\leq &\varepsilon 	\||\w|^2\|_{\infty, \Omega}+c(\varepsilon)y_0^{\frac{2}{N+2}}\sup_{0\leq t\leq T}\|\f\|_{N,\Omega}^2.
	%,\ \ \ c=c(N).
	\end{eqnarray}
	Use this in \eqref{9w1} to yield the desired result.
	%	Here we have made use of the fact 
	%	that $\int_{0}^{T}|A_{n}(t)|dt=|\{(x,t)\in \ot: |\w|^2(x,t)\geq k_n\}|$. 
	%Trivially, we have $y_0\leq |\ot|$.
	This completes the proof.
\end{proof}
\begin{lemma}\label{mpe}
	Let (H2), (H3), (H5), and (H6) hold and $(\m, p)$ be a weak solution of \eqref{e1}-\eqref{e4}. Assume $N=2$. Then $\|\m\|_{\infty,\ot}<\infty$ if and only if $\sup_{0\leq t\leq T}\|\nabla p\|_{q,\Omega}<\infty$ for each $q>1$.
\end{lemma}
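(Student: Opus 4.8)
The statement is an equivalence whose two implications are of very unequal difficulty, so I would treat them separately. Throughout I would write \eqref{e2} in the form $\partial_t\m-D^2\Delta\m+|\m|^{2(\gamma-1)}\m=E^2\mnp\nabla p=:\f$, which is exactly the equation handled by Lemma~\ref{lwb} with forcing $\f$.

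The implication ``$\sup_{0\le t\le T}\|\nabla p\|_{q,\Omega}<\infty$ for each $q>1$'' $\Longrightarrow$ ``$\|\m\|_{\infty,\ot}<\infty$'' is the short one, and I would dispatch it first. Since $(\m,p)$ is a weak solution, (D1) gives $\m\in L^\infty(0,T;(W^{1,2}_0(\Omega))^N)$; because $N=2$, the Sobolev embedding $W^{1,2}_0(\Omega)\hookrightarrow L^s(\Omega)$, valid for every $s<\infty$, yields $\m\in L^\infty(0,T;(L^s(\Omega))^N)$ for all $s<\infty$. Together with the hypothesis $\nabla p\in L^\infty(0,T;L^s(\Omega))$ for all $s$, the pointwise bound $|\f|\le E^2|\m|\,|\nabla p|^2$ shows $\f\in L^\infty(0,T;(L^s(\Omega))^N)$ for every $s<\infty$; in particular $\sup_{0\le t\le T}\|\f\|_{N,\Omega}=\sup_{0\le t\le T}\|\f\|_{2,\Omega}<\infty$. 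As $\m_0$ is bounded by (H4), Lemma~\ref{lwb} then gives $\|\m\|_{\infty,\ot}<\infty$.

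For the converse, ``$\|\m\|_{\infty,\ot}<\infty$'' $\Longrightarrow$ ``$\sup_{0\le t\le T}\|\nabla p\|_{q,\Omega}<\infty$ for each $q>1$'', the plan is a finite bootstrap. Boundedness of $\m$ makes $A:=I+\m\otimes\m$ uniformly elliptic with ellipticity ratio at most $1+\|\m\|_{\infty,\ot}^2$, independent of $t$. Viewing \eqref{e1} as an elliptic equation for $p(\cdot,t)$ for a.e.\ $t$, a Meyers (Gehring-type) higher-integrability estimate — whose constant depends only on $N$, $\Omega$, the ellipticity ratio and $\|S\|_{2,\Omega}$, hence is $t$-independent — gives a first gain $\nabla p\in L^\infty(0,T;L^{2+\varepsilon_0}(\Omega))$ for some $\varepsilon_0>0$. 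Transferring the lower-order term in \eqref{e2} to the right, the right-hand side of the equation for $\m$ is bounded pointwise by $c|\nabla p|^2+c$ (using $\m$ bounded), hence lies in $L^{1+\varepsilon_0/2}(\ot)$; parabolic $L^p$-regularity then upgrades the spatial integrability of $\nabla\m$, hence of $\nabla A=2\,\mathrm{sym}(\m\otimes\nabla\m)$; re-solving \eqref{e1} with this more regular coefficient — via Lemma~\ref{lqe} once $\nabla A$ is integrable enough that $A$ is continuous, or via a further Meyers step otherwise — improves the integrability of $\nabla p$; and one iterates. Each step raises the integrability exponent by a fixed amount, so after finitely many steps the right-hand side of the $\m$-equation lies in $L^{q_*}(\ot)$ for some $q_*>1+\tfrac{N}{2}=2$. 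At that point Lemma~\ref{hcon}, applied to the $\m$-equation (whose datum $\m_0$ is H\"older continuous by (H4)), gives $\m\in C^\alpha(\overline{\ot})$ for some $\alpha\in(0,1)$. Then $A=I+\m\otimes\m$ is continuous on $\overline{\ot}$ with a $t$-uniform modulus of continuity and a $t$-uniform ellipticity constant, so Lemma~\ref{lqe}, applied to $-\textup{div}(A\nabla p)=S$ with $\f\equiv0$ and $g_0=S\in L^2(\Omega)\subset L^{\frac{2q}{2+q}}(\Omega)$ (the inclusion holds for every $q$ since $\Omega$ is bounded), yields $\|\nabla p(\cdot,t)\|_{q,\Omega}\le c\|S\|_{2,\Omega}$ with $c$ independent of $t$, for every $q>1$.

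The main obstacle is the converse implication, and within it the subtle point is keeping every estimate uniform in $t$ down to $t=0$: since $\m_0$ is only H\"older continuous, the instantaneous parabolic smoothing of $\m$ near $t=0$ is limited, so the intermediate bootstrap gains should be read as space-time ($L^q(\ot)$) integrability improvements, and it is essential that the \emph{final} upgrade to H\"older continuity of $\m$ be obtained from Lemma~\ref{hcon}, which is tailored to H\"older initial data and delivers a genuinely global-in-time estimate — only then does Lemma~\ref{lqe} restore the uniform-in-$t$ control of $\nabla p$. A secondary technical point is to verify that each bootstrap step produces a definite increment in the integrability exponent (a Moser-type book-keeping, in the spirit of Lemma~\ref{ynb}), so that the procedure terminates after finitely many steps.
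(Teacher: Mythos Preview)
Your treatment swaps the roles of the two implications relative to the paper, and the outcome is asymmetric: one direction is improved, the other has a gap.

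For the implication ``$\sup_t\|\nabla p\|_{q,\Omega}<\infty$ for all $q$'' $\Rightarrow$ ``$\|\m\|_{\infty,\ot}<\infty$'' your argument is correct and in fact cleaner than the paper's. The paper bounds $\|\mnp\nabla p\|_{N,\Omega}\le\|\m\|_{\infty}\||\nabla p|^2\|_{N,\Omega}$, which makes the estimate from Lemma~\ref{lwb} self-referential and forces a short-time absorption ($c\tau^{1/(N+2)}<1$) followed by a finite subdivision of $[0,T]$. You instead exploit the two-dimensional embedding $W^{1,2}_0(\Omega)\hookrightarrow L^s(\Omega)$ for all $s<\infty$, together with (D1), to bound $\sup_t\|\f\|_{2,\Omega}$ directly, so a single application of Lemma~\ref{lwb} finishes the job with no time-slicing.

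For the converse, your overall scheme (Meyers $\to$ $\m$ H\"older $\to$ Lemma~\ref{lqe}) matches the paper's, but the bootstrap you propose to bridge Meyers and H\"older continuity of $\m$ does not close. The Meyers exponent depends only on the ellipticity ratio of $A=I+\m\otimes\m$, which is already fixed by $\|\m\|_\infty$; ``a further Meyers step'' returns the same $2+\varepsilon_0$. Parabolic $L^p$-regularity with right-hand side in $L^{1+\varepsilon_0/2}(\ot)$ does not embed $\m$ into $C^0$ (that would require exponent larger than $(N+2)/2=2$), and improved integrability of $\nabla\m$ does not feed back into \eqref{e1}: rewriting $-\Delta p=\textup{div}((\m\otimes\m)\nabla p)+S$ only reproduces $\nabla p\in L^{2+\varepsilon_0}$, since $\m$ is merely bounded. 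The loop stalls at $\nabla p\in L^{2+\varepsilon_0}$, short of the $\nabla p\in L^{4+}$ that your use of Lemma~\ref{hcon} would require. The paper sidesteps any iteration by invoking the \emph{anisotropic} H\"older criterion of \cite{Y} (cf.\ \cite{LSU}, p.~182): Meyers gives $|\nabla p|^2\in L^\infty(0,T;L^{1+\varepsilon_0/2}(\Omega))$ uniformly in $t$, and for the heat equation a forcing in $L^\infty_tL^q_x$ with $q>N/2=1$ already yields H\"older continuity of $\m$ on $\overline{\ot}$. This is sharper than the isotropic condition $q>1+N/2$ in Lemma~\ref{hcon} and is exactly what makes the single Meyers step sufficient; once $\m$ is H\"older, Lemma~\ref{lqe} gives the uniform-in-$t$ $L^q$ bound on $\nabla p$ for every $q$.
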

\begin{proof}
	Suppose that $\|\m\|_{\infty,\ot}<\infty$. Then Equation \eqref{e1}
	is uniformly elliptic. A result in \cite{M} asserts that there is a $q>2$ such that
	\begin{equation}
	\|\nabla p\|_{q,\Omega}\leq c\|S(x)\|_{\frac{2q}{2+q},\Omega}.
	\end{equation}
	This together with an argument in \cite{Y} [also see (\cite{LSU}, p.182)] implies that $\m$ is H\"{o}lder continuous on $\overline{\ot}$. Thus Lemma \ref{lqe} becomes applicable to \eqref{e1}. This yields the desired result. 
	%Recall from (D1)
	%	that $\m\in L^\infty\left(0,T;\left(W^{1,2}(\Omega)\right)^N\right)$. This together with the assumption $N=2$ implies that $\m\in L^\infty\left(0,T;\left(\textup{VMO}(\RN)\right)^N\right)$, where $\m$ is understood to be $0$ outside $\Omega$. Thus Theorem 2.1 in \cite{DI} becomes applicable to \eqref{e1}. Using it yields the desired result.
	
	Now assume that $\sup_{0\leq t\leq T}\|\nabla p\|_{q,\Omega}<\infty$ for each $q>1$. Fix $\tau\in (0, T]$. By Lemma \ref{lwb}, there is a positive number $c=c(N)$ such that
	\begin{eqnarray}
	\|\m\|_{\infty,\Omega\times(0,\tau)}&\leq& c\left(\|\m_0\|_{\infty,\Omega}+|\Omega\times(0,\tau)|^{\frac{1}{N+2}}\sup_{0\leq t\leq \tau}\|\mnp\nabla p\|_{N,\Omega}\right)\nonumber\\
	&\leq&c+c\tau^{\frac{1}{N+2}}\|\m\|_{\infty,\Omega\times(0,\tau)}\sup_{0\leq t\leq T}\||\nabla p|^2\|_{N,\Omega}\nonumber\\
	&\leq &c+c\tau^{\frac{1}{N+2}}\|\m\|_{\infty,\Omega\times(0,\tau)},\label{n22}
	\end{eqnarray}
	where $c$ is independent of $\tau$. Hence we can choose $\tau$ so that 
	\begin{equation}
	\mbox{the coefficient of $\|\m\|_{\infty,\Omega\times(0,\tau)}$ on the right-hand side of \eqref{n22} $\equiv c\tau^{\frac{1}{N+2}}<1$}.
	\end{equation}
	This immediately gives $\|\m\|_{\infty,\Omega\times(0,\tau)}<\infty$. Obviously, $[0,T]$ can be divided into a finite number of subintervals with
	each one of them having length less than $\tau$. Apply the preceding argument successively to each one of the subintervals, starting with $[0,\tau]$. The desired result follows.
\end{proof}
Before we conclude this section, we offer the proof of Theorem \ref{prop1}.
\begin{proof}[Proof of Theorem \ref{prop1}]We will only give an outline of the proof, leaving many well-known technical details out.  Assume (D4) and (H3). By the Calderon-Zygmund inequality for parabolic equations \cite{LSU}, we have
	\begin{equation*}
	\partial_t\m, \ \Delta\m \in \left(L^q(\ot)\right)^N\ \ \textup{for each $q>1$.}
	\end{equation*}  Differentiate
	both sides of \eqref{e2} with respect to each one of the space variables and apply Lemma \ref{hcon} to the resulting
	equations in a suitable way to conclude that $\nabla\m$ is H\"{o}lder continuous on $\overline{\ot}$. As a result, the classical Schauder estimates (\cite{GT}, p.107) become applicable to \eqref{e1}. Upon applying, we yield that $p\in L^\infty(0,T; C^{2,\alpha}(\overline{\Omega}))$ for some $\alpha\in (0,1)$. 
	Differentiate \eqref{e1} with respect to $t$ to obtain
	\begin{equation}\label{pte}
	-\textup{div}\left[(I+\m\otimes \m)\nabla \partial_tp\right]=\textup{div}\left(\partial_t\m\otimes \m\nabla p\right)
	+\textup{div}\left(\m\otimes\partial_t \m\nabla p\right)\ \ \ \textup{in $\ot$}.
	\end{equation}
	This puts us in a position to use Lemma \ref{lqe}, from which follows 
	\begin{equation*}
	\nabla \partial_tp\in  \left(L^q(\ot)\right)^N\ \ \textup{for each $q>1$.}
	\end{equation*}
	Differentiate both sides of \eqref{e2} with respect to $t$ and apply the Calderon-Zygmund inequality to the resulting equation to obtain
	\begin{equation*}
	\frac{\partial^2 \m}{\partial t^2}, \ \Delta\partial_t\m \in \left(L^q(\ot)\right)^N\ \ \textup{for each $q>1$.}
	\end{equation*} 
	Now the right-hand side of \eqref{pte} is H\"{o}lder continuous in the space variables, and hence we can apply the Schauder estimates to it to get 
	%implies
	%	\begin{equ}
	%This puts us in a posAstion 
	\begin{equation*}
	\partial_tp\in L^q(0,T; C^{2,\alpha}(\overline{\Omega}))\ \ \textup{for each $q>1$.}
	\end{equation*}
	This implies that $\nabla p$ is H\"{o}lder continuous on $\overline{\ot}$. On the other hand, owing to (H3), the term
	$|\m|^{2(\gamma-1)}\m$ is also H\"{o}lder continuous. We can conclude
	from the parabolic Schauder estimates \cite{K} that $\m$ is a classical solution of \eqref{e2}.
\end{proof}
\section{Partial regularity of weak solutions}\label{sec3}
We begin this section by proving Theorem \ref{linfty}. To this end, we introduce the following notation. For  $-\infty<\ell<L<\infty$ denote by $\theta_{\ell, L}(s)$ the function
\begin{equation}
\theta_{\ell, L}(s)=\left\{\begin{array}{ll}
L&\mbox{if $s\geq L$,}\\
s&\mbox{if $\ell<s< L$,}\\
\ell&\mbox{if $s\leq \ell$.}
\end{array}\right.
\end{equation}
%\section{Proof of Theorem \ref{main}}
%We will assume that $N=2$ throughout this section. Let $(\m, p)$ be a weak solution to \eqref{e1}-\eqref{e4}. Denote by $R$ the distance between $y$ and $\partial\Omega$. Then 

\begin{proof}[Proof of Theorem \ref{linfty}] We only need to consider the case where $N>2$.
	Let $y$ be given as in the theorem. First we assume that $y$ is an interior point. We will show that for each $n>0$ there is a $r\in (0,\dy)$ such that
	\begin{equation}\label{10ow1}
	\mbox{$|\m|^{2(1+n)(1+\frac{2}{N})}\in L^1(\Lambda_{\frac{r}{2}}(y))$ whenever $|\m|^{2(1+n)}\in L^1(\Lambda_r(y))$.}
	\end{equation} 
	By iterating this result, we obtain our theorem.
	
	To see \eqref{10ow1}, for $r\in (0,\dy)$
	selected as below we pick a $C^\infty $ cut-off function with the properties
	% we define a sequence $\{r\}_{j=0}^{\infty}$ by
	%	Then set
	%	\begin{eqnarray}
	%	r_j&=&\frac{r}{2}+\frac{r}{2^{j+1}}.
	%\\
	%	k_j&=&M+k-\frac{k}{2^j}\ \mbox{and}\\
	%	\ell_j&=&p_{y,r}(t)+\ell-\frac{\ell}{2^j},\ \ j=0, 1,2,\cdots,
	%	\end{eqnarray}
	%	where $k, \ell$ are two positive numbers to be determined and $M$ is given as in \eqref{initial}.
	%	Then choose a sequence of smooth functions $\eta$ so that
	\begin{eqnarray}
	\eta&=&1\ \ \ \textup{on $B_{\frac{r}{2}}(y)$,}\label{cutoff1}\\
	\eta&=&0\ \ \ \textup{outside $B_{r}(y)$,}\\
	0&\leq&\eta\leq 1\ \ \mbox{on $\rn$},\\
	|\nabla\eta|&\leq&\frac{c}{r}\ \ \ \textup{on $\rn$.}\label{cutoff4}
	\end{eqnarray}
	Given  $n>0, L>1$, we easily see that the function $\left[\thl\right]^n\m\eta^2$ is a legitimate test function for \eqref{e2}. Upon using it, we obtain
	\begin{eqnarray}
	\lefteqn{\frac{1}{2}\frac{d}{dt}\io\int_{0}^{|\m|^2}\left[\theta_{1,L}(s)\right]^nds\eta^2dx+D^2\io\left[\thl\right]^n|\nabla\m|^2\eta^2dx}\nonumber\\
	&&+\frac{nD^2}{2}\io\left[\thl\right]^{n-1}\left|\nabla\thl\right|^2\eta^2dx+\io|\m|^{2\gamma}\left[\thl\right]^n\eta^2dx\nonumber\\
	&=&E^2\io\mnp^2\left[\thl\right]^n\eta^2dx-D^2\io\left[\thl\right]^n\nabla\m\m2\eta\nabla \eta dx\nonumber\\
	&\leq &E^2\io\mnp^2\left[\thl\right]^n\eta^2dx+\frac{D^2}{2}\io\left[\thl\right]^n|\nabla\m|^2\eta^2dx\nonumber\\
	&&+\frac{c}{r^2}\io\left[\thl\right]^n|\m|^2dx.
	\label{10f1}\end{eqnarray}
	Here we have used the fact that
	\begin{equation}\label{10os1}
	\nabla\m\m=\frac{1}{2}\nabla|\m|^2, \ \ \nabla\thl=0\ \ \mbox{on $\{|\m|^2\geq L\}\cup\{|\m|^2\leq 1\}$.}
	\end{equation}
	Integrate \eqref{10f1} to obtain
	\begin{eqnarray}
	\lefteqn{\esup_{0\leq t\leq T}\int_{B_{r}(y)}\int_{0}^{|\m|^2}\left[\theta_{1,L}(s)\right]^nds\eta^2dx+n\int_{\Lambda_r(y)}\left[\thl\right]^{n-1}\left|\nabla\thl\right|^2\eta^2 dxdt}\nonumber\\
	&\leq &c\int_{\Lambda_r(y)}\mnp^2\left[\thl\right]^n\eta^2dxdt+\frac{c}{r^2}\int_{\Lambda_r(y)}\left[\thl\right]^n|\m|^2dxdt\nonumber\\
	&&+\int_{B_{r}(y)}\int_{0}^{|\m_0|^2}\left[\theta_{1,L}(s)\right]^nds\eta^2dx.\label{9t2}
	\end{eqnarray}
	To bound the first term on the right hand side of \eqref{9t2}, we keep \eqref{pbd} in mind and
	%recall a result from \cite{LX} which states that $p\in L^\infty(\ot)$. Then we can 
	use $\left[\thl\right]^n(p-\pyt)\eta^2$ as a test function in \eqref{e1} to derive
	\begin{eqnarray}
	\lefteqn{\int_{B_{r}(y)}\left[\thl\right]^n|\nabla p|^2\eta^2dx+\int_{B_{r}(y)}\left[\thl\right]^n\mnp^2\eta^2dx}\nonumber\\
	&=&-\int_{B_{r}(y)}\nabla p\left[\thl\right]^n(p-\pyt)2\eta\nabla\eta dx\nonumber\\
	&&-n\int_{B_{r}(y)}\nabla p(p-\pyt)\left[\thl\right]^{n-1}\nabla\thl\eta^2dx\nonumber\\
	&&-n\int_{B_{r}(y)}\mnp\m\left[\thl\right]^{n-1}\nabla\thl(p-\pyt)\eta^2dx\nonumber\\
	&&-\int_{B_{r}(y)}\left[\thl\right]^n(p-\pyt)\mnp\m 2\eta\nabla\eta dx\nonumber\\
	&&+\int_{B_{r}(y)}S(x)\left[\thl\right]^n(p-\pyt)\eta^2dx\nonumber\\
	&\leq &\frac{1}{2}\int_{B_{r}(y)}\left[\thl\right]^n|\nabla p|^2\eta^2dx+\frac{c\delta_r^2(y)}{r^2}\int_{B_{r}(y)}\left[\thl\right]^n dx\nonumber\\
	&&+c\delta_r^2(y)n^2\int_{B_{r}(y)}\left[\thl\right]^{n-2}|\nabla\thl|^2\eta^2dx\nonumber\\
	&&+\frac{1}{2}\int_{B_{r}(y)}\left[\thl\right]^n\mnp^2\eta^2dx\nonumber\\
	&&+\frac{c\delta_r^2(y)}{r^2}\int_{B_{r}(y)}|\m|^2\left[\thl\right]^n dx\nonumber\\
	&&+\delta_r(y)\|S(x)\|_{\frac{N}{2},B_{r}(y)}\left(\int_{B_{r}(y)}\left(\left[\thl\right]^n\eta^2\right)^\frac{N}{N-2}dx\right)^{\frac{N-2}{N}}\nonumber\\
	&&+c\delta_r^2(y)n^2\int_{B_{r}(y)}|\m|^2\left[\thl\right]^{n-2}|\nabla\thl|^2\eta^2dx.\label{10f2}
	\end{eqnarray}
	Remember that $\thl\geq 1$. Thus we always have
	\begin{equation}
	\left[\thl\right]^{n-2}\leq \left[\thl\right]^{n-1}.
	\end{equation}
	We can easily see from \eqref{10os1} that
	%Note that
	\begin{eqnarray}
	|\m|^2\left[\thl\right]^{n-2}|\nabla\thl|^2&=&
	\left[\thl\right]^{n-1}|\nabla\thl|^2.
	\end{eqnarray}
	Use the Gagliardo-Nirenberg-Sobolev inequality to estimate
	\begin{eqnarray}
	\lefteqn{\delta_r(y)\|S(x)\|_{\frac{N}{2},B_{r}(y)}\left(\int_{B_{r}(y)}\left(\left[\thl\right]^n\eta^2\right)^\frac{N}{N-2}dx\right)^{\frac{N-2}{N}}}\nonumber\\
	&\leq &\delta_r(y)\|S(x)\|_{\frac{N}{2},B_{r}(y)}\left(\int_{B_{r}(y)}\left(\left[\thl\right]^{\frac{n+1}{2}}\eta\right)^\frac{2N}{N-2}dx\right)^{\frac{N-2}{N}}\nonumber\\
	&\leq &\delta_r(y)\|S(x)\|_{\frac{N}{2},B_{r}(y)}\int_{B_{r}(y)}\left|\nabla\left(\left[\thl\right]^{\frac{n+1}{2}}\eta\right)\right|^2dx.
	%\nonumber\\
	%&\leq &c(n+1)^2\delta_r(y)\|S(x)\|_{\frac{N}{2},B_{r}(y)}\int_{B_{r}(y)}\left[\thl\right]^{n-1}|\nabla\thl|^2\eta^2dx
	\end{eqnarray}
	Keeping those in mind,  we integrate \eqref{10f2} with respect to $t$ over $(0, T)$ and then use \eqref{9t2} in the resulting inequality to derive
	\begin{eqnarray}
	\lefteqn{\int_{\Lambda_r(y)}\left[\thl\right]^n|\nabla p|^2\eta^2dxdt+\int_{\Lambda_r(y)}\left[\thl\right]^n\mnp^2\eta^2dxdt}\nonumber\\
	&\leq &\frac{c\delta_r^2(y)}{r^2}\int_{\Lambda_r(y)}\left[\thl\right]^n dxdt+\frac{c\delta_r^2(y)}{r^2}\int_{\Lambda_r(y)}|\m|^2\left[\thl\right]^n dxdt\nonumber\\
	%&&\nonumber\\
	&&+(n+1)^2\delta_r(y)\|S(x)\|_{\frac{N}{2},B_{r}(y)})\int_{\Lambda_r(y)}\left[\thl\right]^{n-1}|\nabla\thl|^2\eta^2dxdt\nonumber\\
	&&+\frac{c\delta_r(y)\|S(x)\|_{\frac{N}{2},B_{r}(y)}}{r^2}\int_{\Lambda_r(y)}\left[\thl\right]^{n+1} dxdt+c(\delta_r^2(y)n^2\nonumber\\
	%&&+c\delta_r(y)\|S(x)\|_{\frac{N}{2},B_{r}(y)}\int_{B_{r}(y)}\left[\thl\right]^{n-1}|\nabla\thl|^2\eta^2dx\nonumber\\
	&\leq &\frac{c\delta_r^2(y)}{r^2}\int_{\Lambda_r(y)}\left[\thl\right]^n dxdt\nonumber\\
	&&+\frac{c\delta_r^2(y)(n+1)}{r^2}\int_{\Lambda_r(y)}|\m|^2\left[\thl\right]^n dxdt\nonumber\\
	&&+c\delta_r(y)n\int_{\Lambda_r(y)}\mnp^2\left[\thl\right]^n\eta^2dxdt\nonumber\\
	&&+c\delta_r(y)n\int_{B_{r}(y)}\int_{0}^{|\m_0|^2}\left[\theta_{1,L}(s)\right]^nds\eta^2dx\nonumber\\
	&&+\frac{c\delta_r(y)}{r^2}\int_{\Lambda_r(y)}\left[\thl\right]^{n+1} dxdt.\label{10s1}
	\end{eqnarray}
	By \eqref{oscp1}, we can choose $r\in(0,\dy)$ so that
	\begin{equation}
	c\delta_r(y)n\leq \frac{1}{2}.
	\end{equation}
	With this in hand, we can combine \eqref{9t2} with \eqref{10s1} to deduce
	\begin{eqnarray}
	\lefteqn{\esup_{0\leq t\leq T}\int_{B_{r}(y)}\int_{0}^{|\m|^2}\left[\theta_{1,L}(s)\right]^nds\eta^2dx}\nonumber\\
	&&+n\int_{\Lambda_r(y)}\left[\thl\right]^{n-1}\left|\nabla\thl\right|^2\eta^2 dxdt\nonumber\\
	&\leq &\frac{c\delta_r^2(y)}{r^2}\int_{\Lambda_r(y)}\left[\thl\right]^n dxdt+\frac{c}{r^2}\int_{\Lambda_r(y)}\left[\thl\right]^n|\m|^2dxdt\nonumber\\
	&&+\int_{B_{r}(y)}\int_{0}^{|\m_0|^2}\left[\theta_{1,L}(s)\right]^nds\eta^2dx+\frac{c\delta_r(y)}{r^2}\int_{\Lambda_r(y)}\left[\thl\right]^{n+1} dxdt.\label{10os2}
	\end{eqnarray}
	Taking $L\rightarrow\infty$ in \eqref{10os2} yields
	\begin{eqnarray}
	\lefteqn{\esup_{0\leq t\leq T}\int_{B_{r}(y)}|\m|^{2(n+1)}\eta^2dx+\int_{\Lambda_r(y)}\left|\nabla\left(|\m|^{n+1}\eta\right)\right|^2dxdt}\nonumber\\
	&\leq& c(r)\int_{\Lambda_r(y)}|\m|^{2(n+1)}dxdt+c.
	\end{eqnarray}
	Now we are in a position to apply the Gagliardo-Nirenberg-Sobolev inequality to derive
	\begin{eqnarray}
	\lefteqn{\int_{Q_{\frac{r}{2}}(y)}\left(|\m|^{2(n+1)}\right)^{(1+\frac{2}{N})}dxdt}\nonumber\\
	&\leq &\int_{\Lambda_r(y)}\left(|\m|^{n+1}\eta\right)^{(2+\frac{4}{N})}dxdt\nonumber\\
	&\leq &\int_{0}^{T}\left(\int_{B_{r}(y)}\left(|\m|^{n+1}\eta\right)^{\frac{2N}{N-2}}dx\right)^{\frac{N-2}{N}}\left(\int_{B_{r}(y)}|\m|^{2(n+1)}\eta^2dx\right)^{\frac{2}{N}}dt\nonumber\\
	&\leq &\left(\esup_{0\leq t\leq T}\int_{B_{r}(y)}|\m|^{2(n+1)}\eta^2dx\right)^{\frac{2}{N}}\int_{\Lambda_r(y)}\left|\nabla\left(|\m|^{n+1}\eta\right)\right|^2dxdt\nonumber\\
	&\leq &\left(c(r)\int_{\Lambda_r(y)}|\m|^{2(n+1)}dxdt+c\right)^{1+\frac{2}{N}}<\infty.
	\end{eqnarray}
	If $y\in \partial\Omega$, 
	%then we need to replace $B_r(y)$ by $\Omega_r(y)$ throughout. Furthermore,
	the only change you need to make is that 
	in the test function for \eqref{e2} we substitute  $p$ for $p-\pyt$. Everything else is exactly the same.
	% and . 
	%If $N=2$, use the two-dimensional version of the Gagliardo-Nirenberg-Sobolev inequality. 
	This completes the proof.
\end{proof}
\begin{theorem}\label{hconm}Let \textup{(H1)},\textup{(A1)}-\textup{(A3)} be satisfied and $(p,\m)$ be a weak solution of \eqref{e1}-\eqref{e4}. Assume that $\m_0$ is H\"{o}lder continuous on $\oo$.
	%\in \left(C^\alpha()\right)^
	If $p\in L^\infty(0,T; C^\alpha(\oo))$ for $\alpha\in (0,1)$, then $\m\in\left(C^{\beta,\frac{\beta}{2}}(\overline{\ot})\right)^N$ for some $\beta\in (0,1)$.
\end{theorem}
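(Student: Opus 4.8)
The plan is to regard \eqref{e2} as a linear parabolic system for $\m$ on the bounded cylinder $\ot$,
\[
\partial_t\m-D^2\Delta\m=\mathbf{F},\qquad \mathbf{F}:=E^2\mnp\nabla p-|\m|^{2(\gamma-1)}\m,\qquad \m=0\ \textup{on}\ \Sigma_T,\quad \m(\cdot,0)=\m_0,
\]
and to bring it under the hypotheses of Lemma \ref{hcon}. Since $\m_0$ is H\"older continuous on $\oo$, this reduces the theorem to showing that $\mathbf{F}\in\left(L^s(\ot)\right)^N$ for some $s>1+\frac N2$; and because $|\mathbf{F}|\le E^2\|\m\|_{\infty,\ot}|\nabla p|^2+\|\m\|_{\infty,\ot}^{2\gamma-1}$, it is enough to prove \textup{(a)} $\m\in L^\infty(\ot)$ and \textup{(b)} $\nabla p\in L^r_{\textup{loc}}(\ot)$ for some $r>N+2$ (in fact up to $\partial\Omega$, which is harmless since $p=0$ there).

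For \textup{(a)} I would exploit the fact that $p\in L^\infty(0,T;C^\alpha(\oo))$ forces $\oyt\le cr^\alpha$, so that $\delta_r(y)\to0$ at every $y$ and the hypothesis \eqref{oscp1} of Theorem \ref{linfty} holds everywhere. Theorem \ref{linfty} alone only yields $|\m|^{2n}\in L^\infty(0,T;L^2)\cap L^2(0,T;W^{1,2})$ locally for each $n$, which falls short of a pointwise bound; moreover in its proof the admissible radius must be taken smaller as the power $n$ increases, so a Moser iteration on a fixed ball is not available. Instead I would run a De Giorgi iteration on the super-level sets of $|\m|^2$: use $\left(|\m|^2-k\right)^+\m\eta^2$ (or a suitable truncation of it, so that the gradient of the truncation is supported where $|\m|^2$ is comparable to $k$) as a test function in \eqref{e2}, and $\left(|\m|^2-k\right)^+(p-\pyt)\eta^2$ (with $p-\pyt$ replaced by $p$ in cylinders meeting $\partial\Omega$) as a test function in \eqref{e1}, exactly as in the proof of Theorem \ref{linfty}. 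The key point is that the cubic contribution $E^2\int\mnp^2(\cdots)$ produced this way carries a factor $\le\oyt\le cr^\alpha$ and hence can be absorbed into the diffusion after fixing $r$ small \emph{once and for all}, independently of the level $k$ (whereas in Theorem \ref{linfty} the radius had to shrink with $n$). The resulting energy inequalities for the truncations are of the recursive type covered by Lemma \ref{ynb}, which gives $|\m|^2\in L^\infty\!\left(B_{r/2}(y)\times(0,T)\right)$; covering $\oo$ by finitely many such interior and boundary cylinders, and using $\m|_{\Sigma_T}=0$ and $\m_0\in L^\infty(\Omega)$, then yields $\m\in L^\infty(\ot)$.

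For \textup{(b)}, once $\m\in L^\infty(\ot)$ the matrix $A:=I+\m\otimes\m$ is bounded and uniformly elliptic, so the higher-integrability estimate of \cite{M} gives $\nabla p\in L^{q_0}_{\textup{loc}}(\ot)$ for some $q_0>2$, uniformly in $t$. In addition, testing \eqref{e1} with $(p-\pyt)\eta^2$ and using $p\in C^\alpha$ produces the Morrey-type bound
\[
\int_{\byr}|\nabla p(x,t)|^2\,dx\le c\,r^{N-2+2\alpha_0}\qquad\textup{uniformly in $t$,}
\]
for some $\alpha_0\in(0,1)$ depending on $\alpha$ and on the exponent $q$ in \textup{(A1)}. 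Inserting these into the inhomogeneity of \eqref{e2}, applying parabolic $L^p$-estimates to improve the integrability of $\nabla\m$ (hence of $\nabla A$), and re-inserting the improved coefficient matrix into \eqref{e1} through Lemma \ref{lqe} (once $A$ is continuous) or its $\textup{VMO}$-coefficient analogue in the intermediate steps, one raises $q_0$; a finite number of such alternations---each producing a genuine gain thanks to the Morrey surplus above---yields $\nabla p\in L^r_{\textup{loc}}(\ot)$ for arbitrarily large $r$, in particular for some $r>N+2$, and up to $\partial\Omega$ since $p=0$ there. This is the same kind of bootstrap as in the proof of Lemma \ref{mpe} for $N=2$ (cf.\ \cite{Y}, \cite{LSU}). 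Consequently $\mathbf{F}\in\left(L^s(\ot)\right)^N$ with $s>1+\frac N2$, and Lemma \ref{hcon}, applied componentwise (De Giorgi's method is insensitive to the system structure, as already used in the proof of Lemma \ref{lwb}), gives $\m\in\left(C^{\beta,\frac{\beta}{2}}(\overline{\ot})\right)^N$ for some $\beta\in(0,1)$.

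The main obstacle I foresee is \textup{(a)}---passing from $p\in C^\alpha$ to $\m\in L^\infty$---which is exactly where H\"older continuity of $p$, rather than mere continuity, is used. The delicate point is that the level-set test function in \eqref{e2} forces a matching test function in \eqref{e1}, generating terms that involve $\nabla\!\left(|\m|^2-k\right)^+$ (equivalently $\nabla|\m|^2$ on $\{|\m|^2>k\}$) and that must be re-absorbed into the diffusion; this absorption works only because $\oyt$ is algebraically small in $r$, and arranging the smallness to be uniform in the level $k$---so that $r$ need not be sent to $0$---is the crux. By comparison, the bootstrap in \textup{(b)} is technical but standard, provided it is organized so that the integrability exponent strictly increases at each round.
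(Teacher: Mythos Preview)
Your plan targets the hypotheses of Lemma \ref{hcon}, which forces you to establish both $\m\in L^\infty(\ot)$ and $\nabla p\in L^r$ for some $r>N+2$. The paper avoids both by invoking a different criterion, Lemma~2.2 of \cite{Y}, which only asks for the Morrey-$L^1$ bound $\esup_t\int_{B_r(y)}|\mathbf{F}|\,dx\le cr^{N-2+2\beta}$. This follows in three short steps: test \eqref{e1} with $(p-\pyt)\eta^2$ to get $\int_{B_r}|\nabla p|^2+\int_{B_r}\mnp^2\le cr^{N-2+2\alpha}+cr^{-2+2\alpha}\int_{B_r}|\m|^2$; use Theorem \ref{linfty} plus a finite cover to get $|\m|^2\in L^\infty(0,T;L^s(\Omega))$ for every $s$ (not $L^\infty$), so $\int_{B_r}|\m|^2\le c(s)r^{N-N/s}$; then choose $s$ large and combine via Cauchy--Schwarz. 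No $L^\infty$ bound on $\m$ and no bootstrap on $\nabla p$ are ever needed, and the Morrey estimate you do write down in \textup{(b)} is exactly the one the paper feeds into Yuan's criterion rather than into a Lebesgue-space argument.

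Both of your steps have genuine gaps. In \textup{(a)}, when you test \eqref{e1} with $(|\m|^2-k)^+(p-\pyt)\eta^2$, the cross term $\int\mnp\,(\m\cdot\nabla(|\m|^2-k)^+)(p-\pyt)\eta^2$ cannot be absorbed uniformly in $k$: Cauchy--Schwarz produces either a factor $|\m|^2/(|\m|^2-k)^+$, which blows up near the level, or an unweighted $\int\mnp^2|\m|^2$, which is not dominated by the weighted left-hand side. This is precisely why the proof of Theorem \ref{linfty} uses the power test function $[\thl]^n$---the identity $|\m|^2[\thl]^{n-2}|\nabla\thl|^2=[\thl]^{n-1}|\nabla\thl|^2$ closes the estimate, at the price of $r$ depending on $n$; there is no analogous identity for level truncations, so your claim that $r$ can be fixed independently of $k$ does not go through as written. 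In \textup{(b)}, the bootstrap is circular: Lemma \ref{lqe} requires continuous coefficients, but continuity of $\m$ is the conclusion; invoking a VMO version lies outside the paper's toolkit, and even then you have not quantified the gain per round, so convergence to an exponent $r>N+2$ is unproved.
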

\begin{proof} In view of Lemma 2.2 in \cite{Y}, it is enough for us to show that there exist $c, \beta>0$ such that
	\begin{equation}
	\esup_{0\leq t\leq T}\int_{ B_r(y)}|\f|dx\leq cr^{N-2+2\beta}\ \ \mbox{for all $y\in\oo$ and $r>0$,}
	\end{equation}
	where
	\begin{equation}
	\f=E^2\mnp\nabla p-|\m|^{2\gamma-2}\m.
	\end{equation}
	To this end, let $y$ be given as in the theorem
	% we assume that $y\in\Omega$ 
	and choose a smooth cut-off function $\eta$ as in \eqref{cutoff1}-\eqref{cutoff4}. If $\partial\Omega\cap B_r(y)=\emptyset$, then
	% with $r$ being replaced with $2r$. 
	we use $(p-p_{y,r})\eta^2$ as a test function in \eqref{e1} to obtain
	\begin{eqnarray}
	\lefteqn{\int_{B_{r}(y)}|\nabla p|^2\eta^2dx+	\int_{B_{r}(y)}\mnp^2\eta^2dx}\nonumber\\
	&=&-\int_{B_{r}(y)}\nabla p(p-p_{y,r})2\eta\nabla\eta dx-\int_{B_{r}(y)}\mnp\m(p-p_{y,r})2\eta\nabla\eta dx\nonumber\\
	&&+\ib S(x)(p-p_{y,r})\eta^2dx\nonumber\\
	&\leq &\frac{1}{2}\int_{B_{r}(y)}|\nabla p|^2\eta^2dx+\frac{1}{2}\int_{B_{r}(y)}\mnp^2\eta^2dx\nonumber\\
	&&+cr^{N-2+2\alpha}+cr^{-2+2\alpha}\int_{B_{r}(y)}|\m|^2dx+cr^{N-2+2-\frac{N}{q}+\alpha},
	\end{eqnarray}
	where $q$ is given as in (A1).
	This yields
	\begin{equation}\label{hcon3}
	\int_{B_{r}(y)}|\nabla p|^2\eta^2dx+	\int_{B_{r}(y)}\mnp^2\eta^2dx\leq cr^{N-2+2\alpha}+cr^{-2+2\alpha}\int_{B_{r}(y)}|\m|^2dx.
	\end{equation}
	Here without any loss of generality we have assumed that $2-\frac{N}{q}\geq\alpha$.
	%We can infer from our assumption on $p$ and 
	Theorem \ref{linfty} together with the finite covering theorem  implies that
	\begin{equation}\label{hcon2}
	|\m|^2\in L^\infty(0,T; L^s(\Omega))\ \ \mbox{for each $s\geq 1$.}
	\end{equation}
	Consequently,
	\begin{equation}
	\int_{B_{r}(y)}|\m|^2dx\leq \left(\int_{B_{r}(y)}|\m|^{2s}dx\right)^{\frac{1}{s}}r^{\frac{N(s-1)}{s}}\leq c(s)r^{\frac{N(s-1)}{s}}\ \ \mbox{for each $s>1$.}
	\end{equation}
	Use this in \eqref{hcon3} to derive
	\begin{equation}\label{10ot3}
	\int_{B_{r}(y)}|\nabla p|^2\eta^2dx+	\int_{B_{r}(y)}\mnp^2\eta^2dx\leq cr^{N-2+2\alpha}+cr^{N-2+2\alpha-\frac{N}{s}}
	\end{equation}
	Similarly,
	\begin{equation}
	\int_{ B_r(y)}|\m|^{2\gamma-1}dx\leq \left(\int_{B_{r}(y)}|\m|^{(2\gamma-1)s}dx\right)^{\frac{1}{s}}r^{\frac{N(s-1)}{s}}\leq c(s)r^{\frac{N(s-1)}{s}}=cr^{N-2+2-\frac{N}{s}}.
	\end{equation}
	Choose $s$ so large that $2\alpha-\frac{N}{s}>0$. Then take $\beta= \frac{1}{2}\left(2\alpha-\frac{N}{s}\right)$.  If $\partial\Omega\cap B_r(y)\ne\emptyset$,
	%If $y\in\partial\Omega$,
	we substitute $p$ for $p-\pyr$ in the test function for \eqref{e1} and the subsequent calculations are almost identical. This completes the proof.
\end{proof}
It is known from \cite{LX} that $p\in L^\infty(\ot)$ no matter what the space dimension is.  Local boundedness estimates for $p$ turn out to be a much more delicate issue. From here on in this section we will assume $N=2$, or $3$. We must impose this restriction on the space dimension in order for the Moser-De Giorgi type of arguments to work. For simplicity, we will only consider the case where
\begin{equation}\label{conn}
N=3.
\end{equation} The other case is similar and a little bit simpler.
\begin{theorem}\label{pro5}Let \textup{(H1)}, \textup{(A1)}-\textup{(A3)}, and \eqref{conn} hold
	and $(\m, p)$ a weak solution of \eqref{e1}-\eqref{e4}. Assume that $y\in \Omega$ satisfies \eqref{con1} and \eqref{con2}. 
	Then there is a $\beta>0$ with
	\begin{equation}
	\omega_\rho(y,t)=\textup{osc}_{B_\rho(y)}p\leq c\rho^\beta\ \ \mbox{for all $\rho\in (0,\dy)$.}
	\end{equation}
\end{theorem}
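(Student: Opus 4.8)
The plan is to establish oscillation decay for $p$ by a Moser--De Giorgi iteration carried out on the elliptic equation \eqref{e1} \emph{at each fixed time level}, with the right-hand side controlled uniformly in $t$ by means of the hypotheses \eqref{con1} and \eqref{con2}. First I would record the standing a priori information that will feed the iteration: from \eqref{con2} together with a Poincar\'e/Morrey-type estimate, $\m(\cdot,t)$ lies in a Morrey space, so $\int_{B_\rho(y)}|\m|^2\,dx\leq c\rho^{N-2}$ uniformly in $t$ (this is exactly the scaling that makes the coefficient matrix $A=I+\m\otimes\m$ behave, on small balls, like a perturbation of the identity in the relevant Morrey norm); and from \eqref{con1} the averages $\m_{y,r}(t)$ stay bounded, which lets me replace $\m$ by the bounded constant $\m_{y,r}(t)$ up to a Morrey-small error. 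Combined with \eqref{pbd} ($p\in L^\infty$) and assumption (A1) ($S\in L^q$, $q>N/2$), this gives the Caccioppoli-type inequality
\begin{equation}
\int_{B_{\rho/2}(y)}|\nabla p|^2\,dx+\int_{B_{\rho/2}(y)}\mnp^2\,dx\leq \frac{c}{\rho^2}\int_{B_\rho(y)}(p-p_{y,\rho})^2\,dx+c\rho^{N-2+2\sigma}
\end{equation}
for some $\sigma>0$, valid for a.e.\ $t$, which is the analogue of \eqref{hcon3} but now \emph{with the help of} the two new hypotheses rather than of Theorem \ref{linfty}.

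Next I would run the De Giorgi level-set truncation on $(p-k)^+$ and $(k-p)^+$ over shrinking concentric balls $B_{\rho}(y)\supset B_{\rho/2}(y)$, again at a fixed time $t$. The nonstandard feature is the degeneracy/unboundedness of $A$: the good direction (along $\m$) carries the extra coefficient $1+|\m|^2$, but it appears \emph{favorably} on the left-hand side through the term $\int \mnp^2$, so one only needs the trivial lower bound $A\geq I$ to get the Sobolev embedding going, while the source term $S(p-k)^+\chi_{\{p>k\}}$ is handled by H\"older with exponent $q>N/2$ exactly as in Lemma \ref{lwb}. The recursion produced is of the form treated in Lemma \ref{ynb}, and it yields a local $L^\infty$ bound and, upon the standard second iteration on level sets, a decay of the oscillation: there is $\tau\in(0,1)$ with $\osc_{B_{\rho/2}(y)}p\leq \tau\,\osc_{B_\rho(y)}p+c\rho^\sigma$ for a.e.\ $t$. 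Because $N=3$ (the restriction \eqref{conn}), the exponents in the Sobolev embedding $W^{1,2}\hookrightarrow L^{2N/(N-2)}=L^6$ leave enough room; for $N\geq 4$ they do not, which is the reason the theorem is stated only for $N\leq 3$.

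Finally I would iterate the oscillation inequality over the dyadic scales $\rho_j=2^{-j}r$ and apply the elementary lemma on geometric decay (the same device used to pass from Lemma \ref{ynb}'s recursion to a quantitative conclusion) to conclude $\osc_{B_\rho(y)}p\leq c\rho^\beta$ for all $\rho\in(0,\dy)$ and a.e.\ $t$, with $\beta>0$ depending on $\tau$ and $\sigma$; since the bound is uniform in $t$, it passes to $\omega_\rho(y,t)$ and hence to $\delta_\rho(y)$. I expect the main obstacle to be the first step: converting \eqref{con1}--\eqref{con2} into a Morrey bound for $\m$ that is strong enough to make $A$ a genuinely small perturbation of the identity on small balls (so that the linear elliptic machinery applies with a fixed ellipticity ratio), and tracking this smallness carefully through the Caccioppoli and truncation estimates so that the extra error terms really are of order $\rho^{N-2+2\sigma}$ with a positive $\sigma$ — this is where the dimensional restriction $N\leq 3$ is genuinely used, and where the cubic term $\mnp\nabla p$ must be absorbed rather than merely estimated. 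Once that is in place, the De Giorgi iteration and the dyadic summation are routine.
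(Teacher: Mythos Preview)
Your outline is in the right neighborhood, but the paper's proof is organized rather differently, and one of your displayed claims is not quite correct as written.

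\textbf{What the paper actually does.} The paper does \emph{not} run De Giorgi's two-step iteration directly on $p$. Instead it (i) proves a local sup bound for \emph{subsolutions} of the homogeneous equation $-\textup{div}[(I+\m\otimes\m)\nabla v]=0$ (Claim~\ref{subb}), using exactly the mechanism you describe---splitting $\m=(\m-\m_{y,r}(t))+\m_{y,r}(t)$, Sobolev $W^{1,2}\hookrightarrow L^6$ on the oscillation part, and \eqref{con1}--\eqref{con2}; (ii) gets oscillation decay for homogeneous \emph{solutions} via Moser's log-functions $w_1=\ln\frac{M-m}{2(M-v)}$, $w_2=\ln\frac{M-m}{2(v-m)}$, which are subsolutions, by showing $\int_{B_\rho}|\nabla w_i|^2\leq c\rho$ (again from $\int_{B_{2\rho}}|\m|^2\leq c\rho^3$) and feeding this into Claim~\ref{subb} together with the Poincar\'e inequality on $\{w_i^+=0\}$; (iii) finally decomposes $p=v+u$ on $B_r(y)$ with $v$ solving the homogeneous Dirichlet problem and $u$ absorbing the source $S$, bounds $\|u\|_{\infty}\leq cr^{2-3/q}$ by Proposition~2.1 of \cite{LX}, and combines the two via the standard iteration lemma of \cite{G}. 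So the source term is isolated once and for all in $u$, and the delicate oscillation argument is done on the clean homogeneous equation.

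\textbf{What your route buys and costs.} Your direct approach on $p$ (no decomposition, De Giorgi density lemma rather than Moser's log trick) can be made to work, and the key $\m$-estimate you need is the same one the paper uses in Claim~\ref{subb}: $\int_{B_r}|\m|^2[(p-k)^+]^2\leq cr\bigl(\int_{B_r}[(p-k)^+]^3\bigr)^{2/3}$. With this in hand both the first De Giorgi iteration (local boundedness) and the second (density lemma) go through, because in the density step one may simply use $[(p-k_j)^+]^2\leq (M-k_j)^2$ and $\int_{B_{2\rho}}|\m|^2\leq c\rho^3$ to get $\int_{B_\rho}|\nabla(p-k_j)^+|^2\leq c\rho(M-k_j)^2$, exactly matching the uniformly elliptic case. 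The cost is that you must drag the source $S$ through both iterations; the paper's decomposition avoids this.

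\textbf{One concrete correction.} Your displayed Caccioppoli inequality with a \emph{strictly positive} $\sigma$ does not follow as stated: testing \eqref{e1} with $(p-p_{y,\rho})\eta^2$ produces a term $\frac{c}{\rho^2}\int_{B_\rho}|\m|^2(p-p_{y,\rho})^2$, and the best you get from \eqref{con1}--\eqref{con2} and $p\in L^\infty$ is $\leq c\rho^{N-2}$, i.e.\ $\sigma=0$. This is harmless---the positive $\sigma$ comes from the source term since $q>N/2$, and for the De Giorgi iteration you want the \emph{level-set} Caccioppoli, not this one---but the inequality as you wrote it is not quite what holds. In the paper's organization this issue never arises because $v$ solves the homogeneous equation.
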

We shall adapt an idea from \cite{X3}. For this purpose we first consider
subsolutions of certain homogeneous elliptic equations. 
\begin{definition}
Let $\m$ be given as in \textup{(D1)}.	We say that $v$ is a subsolution of the equation
	\begin{equation}\label{e1z}
	\imm{v}=0\ \ \mbox{in $\Omega$}
	\end{equation} if:
	\begin{enumerate}
		\item[\textup{(D5)}] $v\in W^{1,2}(\Omega), (\m\cdot\nabla v)\in L^2(\Omega)$;
		\item[\textup{(D6)}] $\io\nabla v\nabla\xi dx+\io (\m\cdot\nabla v)(\m\cdot\nabla\xi)dx\leq 0$ for all 
		$\xi\in W_0^{1,2}(\Omega)$ with $\xi\geq 0$ and	$(\m\cdot\nabla \xi)\in L^2(\Omega)$.
	\end{enumerate}
\end{definition}

%In this section, we shall consider the case where $N=3$.
%assume:
%\begin{enumerate}
%	\item[(H4)] , s_0=6
%\end{enumerate}

\begin{clm}\label{subb} Assume that \eqref{conn} holds and $\m\in L^\infty(0,T; \left(W^{1,2}(\Omega)\right)^3)\cap C([0,T]; \left(L^{2}(\Omega)\right)^3)$. Let $y\in\Omega$ be such that \eqref{con1} and \eqref{con2} hold. If $v$ be a subsolution to \eqref{e1z}, then we can find a positive number $c$ with the property
	\begin{equation}
	\esup_{B_{\frac{r}{2}}(y)} v\leq c\left(\aiy(v^+)^3\right)^{\frac{1}{3}}\ \ \mbox{for $r\in (0,\dy)$}.
	\end{equation}
\end{clm}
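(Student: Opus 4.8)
The plan is to run a De Giorgi iteration on level sets of $v^+$, using test functions of the form $(v-k)^+\eta^2$ in the weak subsolution inequality (D6), where $\eta$ is a cut-off function between concentric balls. The ellipticity lower bound in \eqref{ellip} gives control of $\int |\nabla(v-k)^+|^2\eta^2$, and the coupling term $\int(\m\cdot\nabla v)(\m\cdot\nabla((v-k)^+\eta^2))$ is nonnegative on the good part after an integration by parts, with the error terms involving $|\nabla\eta|$ absorbed or estimated by Cauchy--Schwarz. The key difficulty, and the reason \eqref{con1}, \eqref{con2} and the restriction $N=3$ are imposed, is that the term $\int(\m\cdot\nabla v)(\m\cdot\nabla\eta)(v-k)^+\eta$ must be bounded: here $|\m\cdot\nabla\eta|\le|\m||\nabla\eta|\le \frac{c}{r}|\m|$, so we need $|\m|$ integrable to a high enough power on $B_r(y)$. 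That is exactly what \eqref{con2} buys us via Sobolev embedding: $\m\in W^{1,2}(B_r(y))$ with $\frac{1}{r^{N-2}}\int|\nabla\m|^2$ bounded, combined with \eqref{con1} controlling the average, yields (after Poincar\'e--Sobolev, using $N=3$ so that $W^{1,2}\hookrightarrow L^6$) a bound $\frac{1}{r^N}\int_{B_r(y)}|\m|^6\,dx\le C$ uniformly in small $r$, and a fortiori $\m\in L^3$ with the correctly scaled norm.

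The iteration scheme itself is standard once the energy estimate is in hand. First I would fix $r\in(0,\dy)$ and, for $j=0,1,2,\dots$, set $r_j=\frac{r}{2}(1+2^{-j})$, $k_j=k(1-2^{-j})$ for a threshold $k>0$ to be chosen, and $A_j(=|\{v>k_j\}\cap B_{r_j}(y)|)$. Using $(v-k_j)^+\eta_j^2$ with $\eta_j$ a cut-off between $B_{r_{j+1}}$ and $B_{r_j}$ ($|\nabla\eta_j|\le c2^j/r$), the Caccioppoli-type inequality reads
\begin{equation}
\int_{B_{r_{j+1}}(y)}|\nabla(v-k_j)^+|^2\,dx\le \frac{c4^j}{r^2}\int_{B_{r_j}(y)}\big((v-k_j)^+\big)^2\,dx + (\text{term from }\m),
\end{equation}
where the $\m$-term is handled by H\"older with exponents tied to $\frac{1}{r^N}\int|\m|^6\le C$; after absorbing, one gets a clean recursion. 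Then I would apply the Sobolev inequality on $B_{r_{j+1}}$ together with the measure estimate $(k-k_j)^2 A_{j+1}\le \int_{B_{r_j}}((v-k_j)^+)^2$ to obtain $A_{j+1}\le C\,b^j\,k^{-2}r^{-\text{(power)}}A_j^{1+\frac{2}{N}}$ (with the $\m$-contribution producing only an extra power of $k^{-1}$ and a harmless factor from the $L^6$ bound). This is precisely the hypothesis of Lemma \ref{ynb}, so if $A_0=|\{v>0\}\cap B_r(y)|$ satisfies the smallness condition — which is guaranteed by choosing $k$ comparable to $\big(\aiy(v^+)^3\big)^{1/3}$ and using $A_0\le cr^N$ — we conclude $A_\infty=0$, i.e. $\esup_{B_{r/2}(y)}v\le k\le c\big(\aiy(v^+)^3\big)^{1/3}$.

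The main obstacle I anticipate is bookkeeping the scaling so that the final bound is genuinely $r$-independent: the $\m$-term carries factors of $r$ through both $|\nabla\eta|\sim 1/r$ and the scaled $L^6$ norm of $\m$, and one must check these combine with the $k^{-2}r^{-N}$ weights in the De Giorgi recursion to leave a purely dimensional constant, using crucially that the quantities in \eqref{con1} and \eqref{con2} are scale-invariant and that $N=3<6$ makes $W^{1,2}\hookrightarrow L^6$ (so $|\m|\in L^3$ with margin to spare). A secondary point is that $v$ is only assumed to satisfy (D5)--(D6) on $\Omega$ and $\m\in W^{1,2}(\Omega)$, so one must ensure $(v-k_j)^+\eta_j^2$ is an admissible test function, i.e. lies in $W^{1,2}_0$ with $(\m\cdot\nabla(\cdot))\in L^2$; since $v\in W^{1,2}$, $\eta_j$ is smooth with compact support, and $(\m\cdot\nabla v)\in L^2$, this follows from the product rule and the $L^6$ bound on $\m$ via H\"older. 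Once admissibility is verified, the argument is a routine De Giorgi iteration, and the only genuinely new input beyond the classical case is the handling of the $\m\otimes\m$ part of the coefficient matrix, which is where the hypotheses on $\m$ enter.
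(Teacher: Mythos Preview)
Your proposal is correct and follows essentially the same route as the paper: a De Giorgi iteration with test functions $(v-k_j)^+\eta_j^2$, the Caccioppoli-type estimate producing the extra term $\frac{c4^j}{r^2}\int|\m|^2[(v-k_j)^+]^2$, and the bound on this term via \eqref{con1}, \eqref{con2} and Poincar\'e--Sobolev in $N=3$, followed by Lemma~\ref{ynb}. The only cosmetic difference is that the paper iterates the quantity $Y_j=\int_{B_{r_j}(y)}[(v-k_j)^+]^3\,dx$ rather than the level-set measure $A_j$, which is slightly more natural here since the $\m$-term forces the Caccioppoli right-hand side into $L^3$ form; your scheme would converge to the same recursion once carried out.
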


\begin{proof}
	
	%Denote by $R$ the distance between $y$ and $\partial\Omega$. 
	Fix a $r\in (0, \dy)$. 
	Set
	\begin{eqnarray}
	r&=&\frac{r}{2}+\frac{r}{2^{j+1}},\ \ j=0, 1,2,\cdots.
	\\
	k_j&=&k-\frac{k}{2^j},\ \ j=0, 1,2,\cdots,
	\end{eqnarray}
	where $k$ is a positive number to be determined.
	Then choose a sequence of smooth functions $\eta_j$ so that
	\begin{eqnarray}
	\eta_j&=&1\ \ \ \textup{on $B_{r_{j+1}}(y)$,}\\
	\eta_j&=&0\ \ \ \textup{outside $B_{r_{j}}(y)$,}\\
	|\nabla\eta_j|&\leq&\frac{c2^j}{r}\ \ \ \textup{on $\rn$,}\ \ \ j=0, 1, \cdots.
	\end{eqnarray}
	Use $\vkp\eta_j^2$ as a test function in \eqref{e1z} to obtain
	\begin{eqnarray}
	\lefteqn{\int_{B_{r_{j}}(y)}|\nabla \vkp|^2\eta_j^2dx+\int_{B_{r_{j}}(y)}\nabla v\vkp 2\eta_j\nabla\eta_jdx}\nonumber\\
	&&+\int_{B_{r_{j}}(y)}(\m\cdot\nabla \vkp)^2\eta_j^2dx\nonumber\\
	&&+\int_{B_{r_{j}}(y)}(\m\cdot\nabla v)\vkp2\eta_j(\m\cdot\nabla\eta_j)dx\leq 0.
	%\nonumber\\
	%&=&\int_{B_{r_{j}}(y)}S(x)(v^+)^n\eta_j^2dx,
	\end{eqnarray}
	from whence follows
	\begin{eqnarray}
	\lefteqn{\int_{B_{r_{j}}(y)}|\nabla \vkp|^2\eta_j^2dx}\nonumber\\
	&\leq& \frac{c4^j}{r^2}\int_{B_{r_{j}}(y)}\left[\vkp\right]^2dx+ \frac{c4^j}{r^2}\int_{B_{r_{j}}(y)}|\m|^2\left[\vkp\right]^2dx.
	%\nonumber\\
	%	&&+c\int_{B_{r_{j}}(y)}S(x)(v^+)^n\eta_j^2dx.
	\label{9t1}
	\end{eqnarray}
	Remember that $N=3$. We deduce from  Poincar\'{e}'s inequality that
	%	We use \eqref{bmo} to estimate the second integral on the right-hand side of \eqref{9t1}.
	\begin{eqnarray}
	\lefteqn{\int_{B_{r_{j}}(y)}|\m|^2\left[\vkp\right]^2dx}\nonumber\\
	&\leq&2\int_{B_{r_{j}}(y)}|\m-\m_{y, r}(t)|^2\left[\vkp\right]^2dx\nonumber\\
	&&+2|\m_{y, r}(t)|^2\int_{B_{r_{j}}(y)}\left[\vkp\right]^2dx\nonumber\\
	&\leq&2\left(\int_{B_{r}(y)}|\m-\m_{y, r}(t)|^{6}dx\right)^{\frac{1}{3}}\left(\int_{B_{r_{j}}(y)}\left[\vkp\right]^3dx\right)^{\frac{2}{3}}\nonumber\\
	&&+2\int_{B_{r_{j}}(y)}\left[\vkp\right]^2dx
	\nonumber\\
	&\leq&\left(c\int_{B_{r}(y)}|\nabla\m|^2dx+cr\right)\left(\int_{B_{r_{j}}(y)}\left[\vkp\right]^3dx\right)^{\frac{2}{3}}\nonumber\\
	%+\int_{B_{r_{j}}(y)}(v^+)^{n+1}dx
	&\leq&cr\left(\int_{B_{r_{j}}(y)}\left[\vkp\right]^3dx\right)^{\frac{2}{3}}
	.\label{9w3}
	\end{eqnarray}
	Here we have used \eqref{con1} and \eqref{con2}.
	Plug  \eqref{9w3} into \eqref{9t1} to derive
	\begin{equation}
	\int_{B_{r_{j}}(y)}|\nabla \vkp|^2\eta_j^2dx\leq \frac{c4^j}{r}\left(\int_{B_{r_{j}}(y)}\left[\vkp\right]^3dx\right)^{\frac{2}{3}}
	\end{equation}
	We compute from the Gagliardo-Nirenberg-Sobolev inequality that
	\begin{eqnarray}
	\left(\int_{B_{r_{j+1}}(y)}\left[\vkp\right]^6dx\right)^{\frac{1}{3}}
	&\leq &
	\left(\int_{B_{r_{j}}(y)}\left[\vkp\eta_j\right]^6dx\right)^{\frac{1}{3}}\nonumber\\
	&\leq &c\int_{B_{r_{j}}(y)}\left|\nabla\left[\vkp\eta_j\right]\right|^2dx\nonumber\\
	&\leq &\frac{c4^j}{r}\left(\int_{B_{r_{j}}(y)}\left[\vkp\right]^3dx\right)^{\frac{2}{3}}.\label{9w4}
	\end{eqnarray}
	Let
	\begin{equation}
	Y_j= \int_{B_{r_{j}}(y)}\left[(v-k_j)^+\right]^3dx.
	\end{equation}
	%	where $\chi=\frac{2}{N-2}>1$.
	Then we have
	\begin{equation}
	Y_j\geq \frac{k^3}{8^{j+1}}|\{v\geq k_{j+1}\}|.
	\end{equation}
	We infer from \eqref{9w4} that
	\begin{eqnarray}
	y_{j+1}&\leq &\left(\int_{B_{r_{j+1}}(y)}\left[\vkp\right]^6dx\right)^{\frac{1}{2}}|\{v\geq k_{j+1}\}|^{\frac{1}{2}}\nonumber\\
	&\leq &\frac{c(16\sqrt{2})^j}{k^{\frac{3}{2}}r^{\frac{3}{2}}}Y_j^{1+\frac{1}{2}}
	\end{eqnarray}
	We are in a position to apply Lemma \ref{ynb}, from whence follows
	\begin{equation}
	\esup_{B_{\frac{r}{2}}(y)}v\leq k=c\left(\aiy(v^+)^3dx\right)^{\frac{1}{3}}.
	\end{equation}
\end{proof}
\begin{clm}\label{hconp}Let the assumptions of Claim \ref{subb} hold.
	%	Assume that $\m\in L^\infty(0,T; \left(W^{1,2}(\Omega)\right)^3)$. Let $y\in\Omega$ be such that \eqref{con1} and \eqref{con2} hold. 
	If $v$ is a weak solution of \eqref{e1z}, then
	there exist $c>0, \alpha\in (0,1)$ with the property	\begin{equation}
	\textup{osc}_{B_\rho(y)}v\leq c\left(\frac{\rho}{r}\right)^\alpha\textup{osc}_{B_r(y)}v\ \ \mbox{for all $0<\rho\leq r$.}
	\end{equation}
\end{clm}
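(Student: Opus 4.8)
The plan is to carry out De~Giorgi's oscillation-decay argument, using Claim~\ref{subb} as the local maximum principle and treating the non-uniformly elliptic term $\m\otimes\m$ exactly as in the proof of that claim.

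First I would record that since $v$ solves \eqref{e1z}, for every constant $k$ both $\pm(v-k)$ are subsolutions of \eqref{e1z} in the sense of (D5)--(D6), so Claim~\ref{subb} applies to them; in particular $v\in L^\infty$ near $y$ and the quantities $M(\rho):=\esup_{\byr}v$, $m(\rho):=\einf_{\byr}v$, $\omega(\rho):=M(\rho)-m(\rho)=\textup{osc}_{\byr}v$ are finite, with $\omega$ nondecreasing in $\rho$. By the standard iteration lemma for nondecreasing functions (see, e.g., \cite{G} or \cite{GT}), it suffices to produce fixed numbers $\tau,\gamma\in(0,1)$, independent of $r$, with
\begin{equation*}
\omega(\tau r)\leq\gamma\,\omega(r)\ \ \textup{for all $r\in(0,\dy)$};
\end{equation*}
iterating this inequality then gives $\textup{osc}_{B_\rho(y)}v\leq c(\rho/r)^\alpha\,\textup{osc}_{B_r(y)}v$ with $\alpha=\log_\tau\gamma$.

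To prove the reduction of oscillation, fix $r$ and, after subtracting the constant $m(r)$, assume $0\leq v\leq\omega:=\omega(r)$ on $\byr$. One of the two essentially disjoint sets $\{x\in B_{r/2}(y):v(x)\geq\omega/2\}$, $\{x\in B_{r/2}(y):v(x)\leq\omega/2\}$ has measure at least $\tfrac12|B_{r/2}(y)|$; replacing $v$ by $\omega-v$ (again a solution with the same bounds) if necessary, we may assume
\begin{equation*}
\left|\{x\in B_{r/2}(y):v(x)\geq\omega/2\}\right|\leq\tfrac12\left|B_{r/2}(y)\right|.
\end{equation*}
I would then run the De~Giorgi shrinking argument on $B_{r/2}(y)$ over the dyadic levels $k_j=\omega(1-2^{-j-1})$, $j\geq0$, combining the coefficient-free De~Giorgi--Poincar\'e isoperimetric inequality applied to the level sets of $v$ with the Caccioppoli inequality for the subsolution $(v-k_j)^+$. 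Since $|\{v\leq\omega/2\}\cap B_{r/2}(y)|\geq\tfrac12|B_{r/2}(y)|$, the measure-ratio prefactor in the isoperimetric inequality stays under control at every level, and summing the resulting recursive inequalities shows that $|\{v\geq k_{j_0}\}\cap B_{r/2}(y)|\leq\varepsilon_0|B_{r/2}(y)|$ for a prescribed threshold $\varepsilon_0$, where $j_0$ is large but independent of $r$. Choosing $\varepsilon_0$ to be the smallness constant in the De~Giorgi iteration underlying Claim~\ref{subb}, a truncated version of that iteration (started from level $k_{j_0}$ on $B_{r/2}(y)$ and shrinking down to $B_{r/4}(y)$) yields $\esup_{B_{r/4}(y)}v\leq\omega(1-2^{-j_0-2})$, hence $\omega(r/4)\leq(1-2^{-j_0-2})\omega(r)$; this is the desired inequality with $\tau=\tfrac14$ and $\gamma=1-2^{-j_0-2}$.

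The main obstacle is that every place where $\m\otimes\m$ enters — the Caccioppoli inequality for $(v-k_j)^+$ and the level-by-level bounds in the iteration — produces integrals of the form $\int_{B_r(y)}|\m|^2[(v-k_j)^+]^2\,dx$. These are dominated exactly as in \eqref{9w3}: Poincar\'e's inequality, the Sobolev embedding $W^{1,2}\hookrightarrow L^6$ (valid for $N=3$), and the hypotheses \eqref{con1} and \eqref{con2} bound them by $cr\left(\int_{B_r(y)}[(v-k_j)^+]^3\,dx\right)^{2/3}$, which is precisely what makes all the dyadic recursions close. This step is the only delicate point, and it is also the reason the space dimension must be restricted to $N=3$ (or $N=2$).
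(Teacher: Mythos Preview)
Your proposal is correct, but it takes a genuinely different route from the paper's proof. You run the De~Giorgi programme: level-by-level Caccioppoli estimates for $(v-k_j)^+$, the coefficient-free isoperimetric inequality to shrink the superlevel sets, and then the iteration behind Claim~\ref{subb} to improve the supremum. The paper instead follows Moser's method: it introduces the logarithmic subsolutions
\[
w_1=\ln\frac{M(2\rho)-m(2\rho)}{2(M(2\rho)-v)},\qquad
w_2=\ln\frac{M(2\rho)-m(2\rho)}{2(v-m(2\rho))},
\]
obtains a \emph{single} logarithmic Caccioppoli estimate by testing \eqref{e1z} with $\eta^2/(M(2\rho)-v)$, uses \eqref{con1}--\eqref{con2} to bound $\int_{B_{2\rho}(y)}|\m|^2\,dx\le c\rho^3$ (so that $\int_{B_\rho(y)}|\nabla w_1|^2\,dx\le c\rho$), and then applies Claim~\ref{subb} together with the Poincar\'e inequality~(7.45) of \cite{GT} directly to $w_1^+$ to get $\esup_{B_{\rho/2}(y)}w_1\le c$, which unwinds to the oscillation decay. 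Both approaches lean on Claim~\ref{subb} and on the same $W^{1,2}\hookrightarrow L^6$ mechanism for absorbing the $\m\otimes\m$ term; the Moser route is shorter (one energy estimate instead of a countable family plus the measure-shrinking step), while your De~Giorgi route avoids the logarithmic test function and keeps everything at the level of truncations. Either way the restriction $N\le 3$ enters through exactly the computation you singled out.
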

\begin{proof}
	Let  $v, y$ be given as in the theorem. Set
	\begin{eqnarray}
	M(\rho)&=& \esup_{B_\rho(y)}v,\\
	m(\rho)&=& \einf_{B_\rho(y)}v.
	\end{eqnarray}
	We introduce two functions due to Moser \cite{M1}:
	\begin{equation}
	w_1=\ln\frac{M(2\rho)-m(2\rho)}{2(M(2\rho)-v)},\ \ w_2=\ln\frac{M(2\rho)-m(2\rho)}{2(v-m(2\rho))}.
	\end{equation}
	It is easy to verify that both $w_1$ and $w_2$ are subsolutions of \eqref{e1z}. There are only two possibilities: either
	\begin{equation}
	|\{w_1^+=0\}\cap B_\rho(y)|=\left|\left\{v\leq \frac{M(2\rho)+m(2\rho)}{2}\right\}\cap B_\rho(y)\right|\geq \frac{1}{2}\left|B_\rho(y)\right|,
	\end{equation}
	%	Otherwise, we would have
	or
	\begin{equation}
	|\{w_2^+=0\}\cap B_\rho(y)|=\left|\left\{v\geq \frac{M(2\rho)+m(2\rho)}{2}\right\}\cap B_\rho(y)\right|\geq \frac{1}{2}\left|B_\rho(y)\right|.
	\end{equation}
	Assume that the first possibility is in force. This puts us in a position to use formula (7.45) in (\cite{GT}, p.164). Doing so yields
	\begin{equation}\label{10ot2}
	\int_{ B_\rho(y)}|w_1^+|^2dx\leq c\rho^2\int_{ B_\rho(y)}|\nabla w_1^+|^2dx.
	\end{equation}
	Let $\eta $ be a smooth cut-off function given as in \eqref{cutoff1}-\eqref{cutoff4} with $r=2\rho$. We use $\frac{\eta^2}{M(2\rho)-v}$ as a test function in \eqref{e1z} to derive
	\begin{eqnarray}
	\lefteqn{\int_{ B_{2\rho}(y)}\frac{\eta^2}{(M(2\rho)-v)^2}|\nabla v|^2dx+\int_{ B_{2\rho}(y)}\frac{\eta^2}{(M(2\rho)-v)^2}(\m\cdot\nabla v)^2dx}\nonumber\\
	&=&-\int_{ B_{2\rho}(y)}\frac{1}{M(2\rho)-v}\nabla v2\eta\nabla\eta dx-\int_{ B_{2\rho}(y)}\frac{1}{M(2\rho)-v}(\m\cdot\nabla v)\m 2\eta\nabla\eta dx.
	\end{eqnarray}
	It immediately follows that
	\begin{equation}
	\int_{ B_{\rho}(y)}|\nabla w_1|^2dx\leq c\rho+\frac{c}{\rho^2}\int_{ B_{2\rho}(y)}|\m|^2dx.\label{10ot1}
	\end{equation}
	We use \eqref{con1} and \eqref{con2} to estimate
	\begin{eqnarray}
	\int_{ B_{2\rho}(y)}|\m|^2dx&\leq &2\int_{ B_{2\rho}(y)}|\m-\m_{y,2\rho}(t)|^2dx+2|\m_{y,2\rho}(t)|^2\rho^3\nonumber\\
	&\leq &c\rho^2\left(\int_{ B_{2\rho}(y)}|\m-\m_{y,2\rho}(t)|^6dx\right)^{\frac{1}{3}}+c\rho^3\nonumber\\
	&\leq &c\rho^2\int_{ B_{2\rho}(y)}|\nabla \m|^2dx+c\rho^3
	\leq c\rho^3.
	\end{eqnarray}
	This together with \eqref{10ot1} implies
	\begin{equation}
	\int_{ B_{\rho}(y)}|\nabla w_1|^2dx\leq c\rho.
	\end{equation}
	We compute from Claim \ref{subb}, \eqref{10ot2}, and Poincar\'{e}'s inequality that
	\begin{eqnarray}
	\esup_{B_{\rho/2}(y)} w_1&\leq &c\left(\avint_{B_{\rho}(y)}(w_1^+)^3dx\right)^{\frac{1}{3}}\nonumber\\
	&\leq & c\rho\left(\avint_{B_{\rho}(y)}|\nabla w_1^+|^2dx\right)^{\frac{1}{2}}+c\left(\avint_{B_{\rho}(y)}(w_1^+)^2dx\right)^{\frac{1}{2}}\leq c.
	\end{eqnarray}
	By the definition of $w_1$, we have
	\begin{equation}
	\mbox{osc}_{B_{\rho/2}(y)}v=M(\rho/2)-m(\rho/2)\leq \left(1-\frac{1}{2e^c}\right)\mbox{osc}_{B_{\rho}(y)}v	.
	\end{equation}
	If the second possibility holds, we use $w_2$ instead and everything else is the same. 
	Our theorem follows from Lemma 8.23 in (\cite{GT}, p.201).
\end{proof}

We are ready to prove Theorem \ref{pro5}.
\begin{proof}[Proof of Theorem \ref{pro5}]
	Let $y$ be given as in the theorem. Fix a $r\in (0,\dy)$. We decompose $p$ into two functions $v$ and $u$ on $B_{r}(y)$, where $v$ is the weak solution of the boundary value problem 
	\begin{align}
	-\textup{{div}}\left[(I+\m\otimes \m)\nabla v\right]&=0\ \ \ \textup{in $B_{r}(y)$,}\label{do1}\\
	v&=p\ \ \ \textup{on $\partial B_{r}(y)$}\label{do2}
	\end{align}
	and $u=p-v$. Obviously, $u$ satisfies
	\begin{align}
	-\textup{{div}}\left[(I+\m\otimes \m)\nabla u\right]&=S(x)\ \ \ \textup{in $B_{r}(y)$,}\label{vdo1}\\
	u&=0\ \ \ \textup{on $\partial B_{r}(y)$.}\label{vdo2}
	\end{align}
	%	we are in a position to assert from (H1) and 
	As a result, we can apply Proposition 2.1 in \cite{LX} to the above problem. 
	%becomes applicable. %that there is a positive  number $c=c(N, q_0)$ such that
	This yields
	\begin{equation}
	\esup_{B_{r}(y)}|u|\leq cr^{2-\frac{3}{q}}\left(\int_{B_{r}(y)}|S(x)|^{q}dx\right)^{\frac{1}{q}}.
	\end{equation}
	Obviously, we can apply Claim \ref{hconp} to $v$.
	Keeping this in mind, we calculate for $\rho\in(0,r)$ that
	\begin{eqnarray}
	\mbox{osc}_{B_{\rho}(y)}p&\leq &\mbox{osc}_{B_{\rho}(y)}v+\mbox{osc}_{B_{\rho}(y)}u\nonumber\\
	&\leq & c\left(\frac{\rho}{r}\right)^\alpha\mbox{osc}_{B_{\rho}(y)}v+cr^{2-\frac{3}{q}}\left(\int_{B_{r}(y)}|S(x)|^{q}dx\right)^{\frac{1}{q}}\nonumber\\
	&\leq &c\left(\frac{\rho}{r}\right)^\alpha\left(\mbox{osc}_{B_{\rho}(y)}p+2\esup_{B_{r}(y)}|u|\right)+cr^{2-\frac{3}{q}}\nonumber\\
	&\leq &c\left(\frac{\rho}{r}\right)^\alpha\mbox{osc}_{B_{\rho}(y)}p+cr^{2-\frac{3}{q}}.
	\end{eqnarray}
	The theorem follows from Lemma 2.1 in (\cite{G}, p.86).
\end{proof}
%	To estimate $u$,
% is a positive number bigger than $1$.
\begin{proof}[Proof of Theorem \ref{partial}]	Let $y\in \Omega$ be given as in the theorem. Then Theorem \ref{pro5} holds, and so does Theorem \ref{linfty}. To establish Theorem \ref{partial}, it is enough for us to
	%apply . For this purpose, we let
	show that there is a positive number $\beta$ such that
	\begin{equation}\label{hcon1}
	\aiz|\m-\mz|^2dxdt\leq cr^{2\beta}\ \ \mbox{for $r$ sufficiently small,}
	\end{equation}
	where $z=(y,\tau)$. Indeed, if the above inequality holds, we can infer from the proof of Theorem 1.2 in (\cite{G}, p.70) that
	\begin{equation}
	\limsup_{r\rightarrow 0}|\mz|<\infty.
	\end{equation}
	This together with Theorem \ref{pro5} and \eqref{hcon1} implies
	\eqref{lx1}.
	%	To prove \eqref{hcon1}, we first assume that $z=(y,\tau)\in \ot$. 
	
	We can further weaken \eqref{hcon1}. In fact,
	we only need to show that there is a $\sigma\in (0,1)$ such that
	\begin{equation}\label{hcon11}
	\aiz|\m-\mz|^{1+\sigma}dxdt\leq cr^{2\beta}\ \ \mbox{for $r$ sufficiently small .}
	\end{equation}
	This is due to the following estimate
	\begin{eqnarray}
	\aiz|\m-\mz|^{2}dxdt&\leq& \left(\aiz|\m-\mz|^{1+\sigma}dxdt\right)^{\frac{1}{1+\delta}}\left(\aiz|\m-\mz|^{\frac{1+\sigma}{\sigma}}dxdt\right)^{\frac{\sigma}{1+\sigma}}\nonumber\\
	&\leq &cr^{\frac{2\beta}{1+\sigma}-\frac{3\sigma}{1+\sigma}}\left(\int_{ Q_r(z)}|\m|^{\frac{1+\sigma}{\sigma}}dxdt\right)^{\frac{\sigma}{1+\sigma}}
	\end{eqnarray}
	In view of Theorem \ref{linfty}, we can choose $\sigma>0, r>0$ so small that
	\begin{equation}
	\frac{2\beta}{1+\sigma}-\frac{3\sigma}{1+\sigma}>0\ \ \mbox{and}\ \ \int_{ Q_r(z)}|\m|^{\frac{1+\sigma}{\sigma}}dxdt<\infty.
	\end{equation}
	%provided that $r$ is suitably small.
	To prove \eqref{hcon11}, we pick a  $r>0$ so that $Q_{2r}(z)\subset\ot$. We decompose $\m$ on  $Q_r(z)$ as follows: Solve the linear problem
	\begin{eqnarray}
	\partial_t\w-D^2\Delta\w&=& 0\ \ \mbox{in  $Q_r(z)$,}\\
	\w&=&\m\ \ \mbox{on $\partial_pQ_r(z)$,}
	\end{eqnarray}
	where $\partial_pQ_r(z)$ denotes the parabolic boundary of $Q_r(z)$. Let $\n=\m-\w$. Denote by $n_i$ the $i^{\mbox{th}}$ component of $\n$. Then $n_i$ satisfies
	\begin{eqnarray}
	\partial_tn_i-D^2\Delta n_i&=&E^2\mnp\partial_{x_i} p-|\m|^{2\gamma-2}m_i\ \ \mbox{in $Q_r(z)$,}\label{ni1}\\
	n_i&=&0\ \ \mbox{on $\partial_pQ_r(z)$}
	\end{eqnarray}
	By slightly modifying the proof of Claim 1 in \cite{X2}, we conclude that there exist $c>0$, $\delta\in (0,1)$ depending only on $ D, \sigma$ such that
	\begin{equation}
	\int_{Q_{\rho}(z)}|\w-\w_{z,\rho}|^{1+\sigma}dxdt\leq c\left(\frac{\rho}{r}\right)^{5+\delta}\int_{Q_{r}(z)}|\w-\w_{z,r}|^{1+\sigma}dxdt\ \ \mbox{for all $0<\rho\leq r$.}
	\end{equation}
	We proceed to estimate $\n$. Theorem \ref{pro5} combined with Theorem \ref{linfty} and the proof of Theorem \ref{hconm} implies that \eqref{10ot3} still holds. With this in mind, we use $\theta_{-L,L}(n_i)$ as a test function in
	\eqref{ni1} to obtain
	\begin{eqnarray}
	\lefteqn{\maxt\int_{ B_r(y)}\int_{0}^{n_i}\theta_{-L,L}(s)dsdx+D^2\int_{ Q_r(z)}|\nabla\theta_{-L,L}(n_i)|^2dxdt}\nonumber\\
	&\leq& cL\int_{\tau-r^2/2}^{\tau+r^2/2}\int_{ B_r(y)}|\mnp||\nabla p|dxdt+ cL\int_{\tau-r^2/2}^{\tau+r^2/2}\int_{ B_r(y)}|\m|^{2\gamma-1}dxdt\nonumber\\
	&\leq &cLr^{3+2\beta}\ \ \mbox{for some $\beta>0$ and $r$ suitably small.}
	\end{eqnarray}
	We calculate from the Gagliardo-Nirenberg-Sobolev inequality that
	\begin{eqnarray}
	\lefteqn{\int_{\tau-r^2/2}^{\tau+r^2/2}\int_{ B_r(y)}\left|\theta_{-L,L}(n_i)\right|^{2+\frac{4}{3}}dxdt}\nonumber\\&\leq &
	\int_{\tau-r^2/2}^{\tau+r^2/2}\left(\int_{ B_r(y)}\left|\theta_{-L,L}(n_i)\right|^{6}dx\right)^{\frac{1}{3}}\left(\int_{ B_r(y)}\left|\theta_{-L,L}(n_i)\right|^{2}dx\right)^{\frac{2}{3}}dt\nonumber\\
	&\leq &c\left(\maxt\int_{ B_r(y)}\left|\theta_{-L,L}(n_i)\right|^{2}dx\right)^{\frac{2}{3}}\int_{ Q_r(z)}|\nabla\theta_{-L,L}(n_i)|^2dxdt\nonumber\\
	&\leq &cL^{\frac{5}{3}}r^{\frac{5(3+2\beta)}{3}},
	\end{eqnarray}	
	from whence follows
	\begin{equation}
	|\{|n_i|\geq L\}|\leq \frac{cr^{\frac{5(3+2\beta)}{3}}}{L^{\frac{5}{3}}}.
	\end{equation}	
	We infer from \eqref{wlp} that
	\begin{equation}
	\int_{ Q_r(z)}|\n|^{\frac{5}{3}-\varepsilon}\leq \frac{c}{\varepsilon}r^{5+2\beta(\frac{5}{3}-\varepsilon)},\ \ \varepsilon\in(0, \frac{2}{3}).
	\end{equation}
	We pick an $\varepsilon$ so that
	\begin{equation}
	\frac{2}{3}-\varepsilon=\sigma.
	\end{equation}
	For $0<\rho\leq r$ we calculate
	\begin{eqnarray}
	\lefteqn{\int_{Q_{\rho}(z)}|\m-\m_{z,\rho}|^{1+\sigma}dxdt}\nonumber\\&\leq &
	c\int_{Q_{\rho}(z)}|\w-\w_{z,\rho}|^{1+\sigma}dxdt+c\int_{Q_{\rho}(z)}|\n-\n_{z,\rho}|^{1+\sigma}dxdt\nonumber\\
	&\leq& c\left(\frac{\rho}{r}\right)^{5+\delta}\int_{Q_{r}(z)}|\w-\w_{z,r}|^{1+\sigma}dxdt+c\int_{Q_{\rho}(z)}|\n|^{1+\sigma}dxdt\nonumber\\
	&\leq& c\left(\frac{\rho}{r}\right)^{5+\delta}\int_{Q_{r}(z)}|\m-\m_{z,r}|^{1+\sigma}dxdt+r^{5+2\beta(1+\sigma)}.
	\end{eqnarray}
	We conclude \eqref{hcon11} from Lemma 2.1 from (\cite{G}, p.86). The proof is complete.
\end{proof}

\section{Proof of Theorems \ref{locexis} and \ref{larexis}}\label{sec4}

%If $\m=(m_1, \cdots,m_N)^T$ is a vector-valued function, then $$
\begin{proof}[Proof of Theorem \ref{locexis}] For each $\varepsilon>0$ we define
	\begin{equation}
	\ml{\m}{\varepsilon}=(\theta_{-\varepsilon,\varepsilon}(m_1), \cdots,\theta_{-\varepsilon,\varepsilon}(m_N))^T.
	\end{equation} 
	Then we have
	\begin{equation}\label{feps}
	\m_0+\ml{\m-\m_0}{\varepsilon}=\m\ \ \textup{on the set where $|\m-\m_0|\leq \varepsilon$.}
	\end{equation}
	Replace $\m$ by $	\m_0+\ml{\m-\m_0}{\varepsilon}$ in \eqref{e1} and
	write the resulting equation in the form
	\begin{eqnarray}
	-\textup{div}\left[(I+\m_0\otimes\m_0)\nabla p\right]&=&\textup{div}\left(\m_0\otimes\ml{\m-\m_0}{\varepsilon}\nabla p\right)+\textup{div}\left(\ml{\m-\m_0}{\varepsilon}\otimes\m_0\nabla p\right)\nonumber\\
	&&+\textup{div}\left(\ml{\m-\m_0}{\varepsilon}\otimes\ml{\m-\m_0}{\varepsilon}\nabla p\right)+S(x) \ \ \textup{in $\ot$.}\label{lapp}
	\end{eqnarray}
	%It is a rather standard procedure that we can show that 
	\begin{clm}
		If $\varepsilon$ is sufficiently small, then \eqref{lapp}
		coupled with \eqref{e2}-\eqref{e4} has a weak solution satisfying (D4). 
	\end{clm}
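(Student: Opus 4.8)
The plan is to obtain a weak solution of the truncated system by the Galerkin/compactness scheme of \cite{HMP} — which applies verbatim, and is in fact more comfortable here since $|\ml{\m-\m_0}{\varepsilon}|\le\sqrt{N}\,\varepsilon$ makes the coefficient matrix of \eqref{lapp} uniformly elliptic with a \emph{bounded} ellipticity ratio, unlike the one in \eqref{e1} — and then to upgrade it to a solution with property (D4). All the work is in (D4), and it is precisely to allow that upgrade that the equation has been written in the form \eqref{lapp}: its principal part $-\textup{div}\left[(I+\m_0\otimes\m_0)\nabla\,\cdot\,\right]$ has coefficients that are continuous on $\oo$ (because $\m_0$ is, by (H4)) and \emph{independent of} $\m$, whereas the three remaining divergence terms carry the coefficient $B=\m_0\otimes\ml{\m-\m_0}{\varepsilon}+\ml{\m-\m_0}{\varepsilon}\otimes\m_0+\ml{\m-\m_0}{\varepsilon}\otimes\ml{\m-\m_0}{\varepsilon}$, which obeys $\|B\|_{\infty,\ot}\le 2\sqrt{N}\|\m_0\|_{\infty,\Omega}\,\varepsilon+N\varepsilon^2=:\mu(\varepsilon)$ and is thus as small as we please once $\varepsilon$ is.

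First I would estimate $\nabla p$. For a.e.\ fixed $t$, view \eqref{lapp} as $-\textup{div}\left[(I+\m_0\otimes\m_0)\nabla p\right]=\textup{div}(B\nabla p)+S(x)$, a boundary value problem with continuous, uniformly elliptic ($\lambda=1$) coefficients, so Lemma \ref{lqe} yields
\[
\|\nabla p(\cdot,t)\|_{2N,\Omega}\le c(N,\Omega)\left(\mu(\varepsilon)\|\nabla p(\cdot,t)\|_{2N,\Omega}+\|S\|_{\frac{2N}{3},\Omega}\right),
\]
with $c$ \emph{not} depending on $\m$. Fixing $\varepsilon$ so small that $c(N,\Omega)\mu(\varepsilon)\le\frac12$, a contraction argument (legitimate because $W^{1,2N}_0(\Omega)\subset W^{1,2}_0(\Omega)$, so the higher-integrability solution coincides with the weak solution) gives $\sup_t\|\nabla p\|_{2N,\Omega}\le 2c(N,\Omega)\|S\|_{\frac{2N}{3},\Omega}$; this is finite because $S\in L^2(\Omega)$ and $N\le3$, so that $L^2(\Omega)\subset L^{\frac{2N}{3}}(\Omega)$. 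In particular $|\nabla p|^2\in L^\infty(0,T;L^N(\Omega))$, which is the $\nabla p$-half of (D4)$^\prime$.

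Next I would bound $\m$. Writing \eqref{e2} as $\partial_t\m-D^2\Delta\m+|\m|^{2(\gamma-1)}\m=E^2\mnp\nabla p$, I would argue as in the proof of Lemma \ref{mpe}: on a time sub-interval $(t_j,t_{j+1})$ short enough that $c\,(t_{j+1}-t_j)^{\frac{1}{N+2}}E^2\sup_t\||\nabla p|^2\|_{N,\Omega}<1$, Lemma \ref{lwb} applied with $\f=E^2\mnp\nabla p$, together with $\|\f\|_{N,\Omega}\le E^2\|\m\|_{\infty}\||\nabla p|^2\|_{N,\Omega}$, gives — after absorbing $\|\m\|_\infty$ on the right — $\|\m\|_{\infty,\Omega\times(t_j,t_{j+1})}\le c\bigl(\|\m(\cdot,t_j)\|_{\infty,\Omega}+1\bigr)$ with $c$ independent of $j$; chaining the finitely many such sub-intervals covering $[0,T]$ gives $\|\m\|_{\infty,\ot}<\infty$. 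Hence the weak solution satisfies (D4)$^\prime$, and therefore (D4) by the remark following Lemma \ref{lqe}. (If one prefers to bypass the Galerkin construction, the same conclusion follows from a Schauder fixed point for the map $\m\mapsto p\mapsto\widetilde\m$ — solve \eqref{lapp} by Lax--Milgram, solve \eqref{e2} by monotonicity using Lemma \ref{plap} — since the truncation and the uniform bounds above make this map carry a sufficiently large ball of $L^\infty(\ot)^N$ into itself, with Aubin--Lions supplying compactness.)

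The main obstacle is exactly what the reformulation \eqref{lapp} is designed to defeat: the matrix $I+\m\otimes\m$ and its truncation $I+(\m_0+\ml{\m-\m_0}{\varepsilon})\otimes(\m_0+\ml{\m-\m_0}{\varepsilon})$ are only bounded and measurable — nothing better is known about $\m$ — so the $L^q$-elliptic theory of Lemma \ref{lqe} cannot be applied to them directly, and without a gradient bound on $p$ beyond $L^2$ one cannot bound $\m$. Peeling off the continuous, $\m$-free part $I+\m_0\otimes\m_0$ and confining the rough part to a coefficient of arbitrarily small sup-norm breaks this circularity, everything then closing by absorption with $\m$-independent constants. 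The secondary point is the borderline nature of the De~Giorgi step: the forcing $E^2\mnp\nabla p$ in \eqref{e2} grows like $|\m|^2|\nabla p|^2$, so it must be controlled in $L^\infty(0,T;L^N(\Omega))$, i.e.\ $\nabla p\in L^{2N}$; extracting that from Lemma \ref{lqe} needs $S\in L^{\frac{2N}{3}}(\Omega)$, which — given only $S\in L^2(\Omega)$ — is precisely the restriction $N\le3$.
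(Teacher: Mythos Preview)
Your $L^q$ estimate for $\nabla p$ via Lemma~\ref{lqe} and the smallness of $\varepsilon$ is exactly the paper's first step (its display \eqref{cl2}). The divergence is in how the $L^\infty$ bound on $\m$ is obtained, and there your primary argument has a genuine gap.

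The paper does not build a weak solution by Galerkin and then upgrade. It applies Leray--Schauder directly in $(L^\infty(\ot))^N$: for $\m\in(L^\infty(\ot))^N$ let $p$ solve \eqref{lapp}--\eqref{e3}, and set $B(\m)=\w$ where $\w$ solves
\[
\partial_t\w-D^2\Delta\w=E^2\bigl((\m_0+\ml{\m-\m_0}{\varepsilon})\cdot\nabla p\bigr)\nabla p-|\m|^{2(\gamma-1)}\m,\qquad \w|_{\Sigma_T}=0,\ \ \w(\cdot,0)=\m_0.
\]
The decisive feature is that the first forcing term carries the \emph{truncated} vector $\m_0+\ml{\m-\m_0}{\varepsilon}$, not $\m$; its modulus is at most $(\|\m_0\|_\infty+\sqrt N\,\varepsilon)|\nabla p|^2$, which together with the bound $\|\nabla p\|_q\le c$ for a fixed $q>2(1+N/2)$ places it in $L^{q/2}(\ot)$ with a constant \emph{independent of $\|\m\|_\infty$}. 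Lemma~\ref{hcon} then makes $\w$ H\"older continuous, so $B$ maps $(L^\infty)^N$ compactly into itself, and the same estimate applied to solutions of $\m=\sigma B(\m)$ supplies the uniform a~priori bound Leray--Schauder needs. No absorption of $\|\m\|_\infty$ against itself is ever required.

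Your absorption step, by contrast, is circular. You apply Lemma~\ref{lwb} with $\f=E^2(\m\cdot\nabla p)\nabla p$ and bound $\|\f\|_{N}\le E^2\|\m\|_\infty\||\nabla p|^2\|_N$, but the resulting inequality $\|\m\|_\infty\le c+c'\|\m\|_\infty$ with $c'<1$ yields nothing unless $\|\m\|_\infty$ is already finite. The HMP weak solution gives only $\m\in L^\infty(0,T;W^{1,2}_0)$; for $N=3$ this is $\m\in L^6$, hence $\m\cdot\nabla p\in L^3$, so $\sup_t\|\f\|_{N}$ is not even known to be finite and Lemma~\ref{lwb} cannot be invoked. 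Time-slicing does not help: the identical finiteness obstruction recurs on the first slice. Your parenthetical Schauder-fixed-point alternative \emph{is} the paper's route, but your sketch omits the one move that makes it work---replacing $\m$ by $\m_0+\ml{\m-\m_0}{\varepsilon}$ in the parabolic forcing, which is precisely what decouples the a~priori bound from $\|\m\|_\infty$.
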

	\begin{proof}A solution will be constructed via the Leray-Schauder theorem (\cite{GT}, p.280). For this purpose we define an operator $B$ from $\left(L^\infty(\ot)\right)^N$ into itself as follows: For each $\m\in \left(L^\infty(\ot)\right)^N$ we say $B(\m)=\w$ if $\w$ is the unique solution of the initial
		boundary value problem
		\begin{eqnarray}
		\partial_t\w-D^2\Delta\w&=&E^2((\m_0+\ml{\m-\m_0}{\varepsilon})\cdot\nabla p)\nabla p-|\m|^{2(\gamma-1)}\m\ \ \textup{in $\ot$,}\label{cl1}\\
		\w&=&0\ \ \textup{on $\Sigma_T$,}\\
		\w(x,0)&=&\m_0(x)\ \ \textup{on $\Omega$,}
		\end{eqnarray}
		where $p$ solves \eqref{lapp} coupled with \eqref{e3}. The latter problem has a unique solution if $\varepsilon$ is sufficiently small. To see this, first
		%We proceed
		%to establish a priori estimates. 
		observe that the elliptic coefficients on the left-hand side of \eqref{lapp} are continuous. Therefore, we are in a position to apply Lemma \ref{lqe}, from whence follows that for each $q>1$ there is a positive number c determined only by $q, \m_0, N$, and $\Omega$ such that
		% becomes applicable, and upon using it, we arrive at
		\begin{eqnarray}
		\|\nabla p\|_{q,\Omega} &\leq& c\|\f_\varepsilon\otimes\m_0\nabla p\|_{q,\Omega}+c\|\f_\varepsilon\otimes\f_\varepsilon\nabla p\|_{q,\Omega}+c\|S(x)\|_{\frac{Nq}{N+q, \Omega}}\nonumber\\
		&\leq &c(\varepsilon+\varepsilon^2)\|\nabla p\|_{q,\Omega}+c.\label{cl2}
		\end{eqnarray}
		%for each $q>1$, where $c$ is independent of $\varepsilon$. 
		Now fix a $q>2(1+\frac{N}{2})$.
		We have
		\begin{equation}\label{cl4}
		\|\nabla p\|_q \leq c
		\end{equation}
		if we choose $\varepsilon$ so that the coefficient $c(\varepsilon+\varepsilon^2)$
		% of $\|\nabla p\|_{q,\Omega}$ 
		in \eqref{cl2} is strictly less than $1$.
		%to be sufficiently small.
		From here on we assume that this is the case.
		%This together with 
		Subsequently, Lemma \ref{hcon} becomes applicable to \eqref{cl1}. Upon using it, we obtain  that $\w$ is %bounded, and thus it is 
		H\"{o}lder continuous on $\overline{\ot}$. Therefore, we can claim that $B$ is well-defined, continuous, and precompact. 
		It remains to be seen that there is a positive number $c$ such that
		\begin{equation}\label{cl5}
		\|\m\|_{\infty,\ot}\leq c
		\end{equation}
		for all $\m\in \left(L^\infty(\ot)\right)$ and $\sigma\in (0,1]$ satisfying
		\begin{equation*}
		\m=\sigma B(\m).
		\end{equation*}
		This equation is equivalent to the following problem
		\begin{eqnarray}
		-\textup{div}\left[(I+\m_0\otimes\m_0)\nabla p\right]&=&\textup{div}\left(\m_0\otimes\ml{\m-\m_0}{\varepsilon}\nabla p\right)+\textup{div}\left(\ml{\m-\m_0}{\varepsilon}\otimes\m_0\nabla p\right)\nonumber\\
		&&+\textup{div}\left(\ml{\m-\m_0}{\varepsilon}\otimes\ml{\m-\m_0}{\varepsilon}\nabla p\right)+S(x) \ \ \textup{in $\ot$,}\\
		\partial_t\m-D^2\Delta\m&=&E^2\sigma((\m_0+\ml{\m-\m_0}{\varepsilon})\cdot\nabla p)\nabla p\nonumber\\
		&&-\sigma|\m|^{2(\gamma-1)}\m\ \ \textup{in $\ot$,}\label{cl3}\\
		\m&=&0\ \ \textup{on $\Sigma_T$,}\\
		p&=&0\ \ \textup{on $\Sigma_T$,}\\
		\m(x,0)&=&\sigma\m_0(x)\ \ \textup{on $\Omega$.}
		\end{eqnarray}
		We still have \eqref{cl4}. As a result, the right-hand side
		of \eqref{cl3} is bounded in $L^{\frac{q}{2}}(\ot)$. Recall that
		$\frac{q}{2}>1+\frac{N}{2}$. Hence \eqref{cl5} follows from Lemma \ref{hcon}. This completes the proof of the claim.
	\end{proof}
	%	The preceding a priori estimates indeed imply that the problem \eqref{lapp} and
	%	\eqref{e2}-\eqref{e4} has a weak solution, provided that $\varepsilon$ is small enough.
	
	To continue the proof of Theorem \ref{locexis}, by the H\"{o}lder continuity of $\m$ on $\overline{\ot}$, we can find a positive number $T_0\leq T$ such that
	\begin{equation*}
	|\m(x,t)-\m_0(x)|\leq ct^{\frac{\alpha}{2}}\leq\varepsilon\ \ \textup{on $\Omega_{T_0}$,}
	\end{equation*}
	where $\alpha$ is the H\"{o}lder exponent of $\m$.
	We see from \eqref{feps} that \eqref{lapp} reduces to \eqref{e1} on $\Omega_{T_0}$, where (D4)$^\prime$ holds true. 
	%Thus we can apply Lemma \ref{lqe} to \eqref{e1}
	%on $\Omega_{T_0}$ to obtain
	%	\begin{equation}
	%	\sup_{0\leq t\leq T_0}\|\nabla p\|_{q, \Omega}\leq c\ \ \textup{for each $q>1$.}
	%	\end{equation}	That is, $(\m, p)$ is a classical solution of \eqref{e1}-\eqref{e4}. 
	The proof is complete.
	%that $m$ is continuous
\end{proof}

\begin{proof}[Proof of Theorem \ref{larexis}]
	Fix $T>0$. We construct a sequence of functions $\{(\wk,\ \psk)\}$ on $\ot$ as follows: Set
	%by successively solving
	\begin{equation*}
	\w_0=\m_0.
	\end{equation*}
	The function $p_0$ is the unique solution of the boundary value problem
	\begin{eqnarray}
	-\textup{div}[(I+\m_0\otimes \m_0)\nabla p_0] &=& S(x)\label{pz1}
	\ \ \ \textup{in $\Omega$,}\\
	p_0&=& 0\ \ \ \textup{on $\partial\Omega$.}\label{pzt}
	\end{eqnarray}
	Suppose that $\wko,\ \psko, k=1,2,\cdots,$ are known. We define $\psk$ to be the unique solution of the boundary value problem
	\begin{eqnarray}
	%-\textup{div}\left[(I)\right]
	-\Delta\psk&=&\textup{div}\left[(\wko\cdot\nabla\psko)\wko\right]-\Delta p_0-\textup{div}\left[(\m_0\cdot\nabla p_0)\m_0\right]\ \ \textup{in $\ot$,}\label{agp10}\\
	\psk&=&0\ \ \textup{on $\Sigma_T$,}
	\end{eqnarray}
	while $\wk$ solves the problem
	\begin{eqnarray}
	\partial_t\wk-D^2 \Delta\wk+|\wk|^{2(\gamma-1)}\wk&=&E^2(\wko\cdot\nabla\psko)\nabla\psko\  \  \textup{in $\ot$,}\label{agp11}\\
	\wk&=&0\ \ \textup{on $\Sigma_T$,}\\
	\wk(x,0)&=& \m_0(x)\   \  \textup{on $\Omega$.}\label{agp12}
	\end{eqnarray}
	The uniqueness of a solution to the preceding problem can easily be inferred from Lemma \ref{plap}. Obviously, if $\{(\wko,\psko)\}$ satisfies (D4), so does $\{(\wk,\psk)\}$. The sequence $\{(\wk,\psk)\}$ is well-defined.
	%We conclude 
	It follows from Lemma \ref{lwb} that there is a positive number $c=c(N,\Omega)$ with 
	\begin{eqnarray}
	a_k\equiv \|\wk\|_{\infty,\ot}&\leq& c\left(\|\m_0\|_{\infty,\Omega}+T^{\frac{1}{N+2}}\sup_{0\leq t\leq T}\left\||\wko||\nabla\psko|^2\right\|_{N,\Omega}\right)\nonumber\\
	&\leq& c\|\m_0\|_{\infty,\Omega}+cT^{\frac{1}{N+2}} a_{k-1}b_{k-1}^2,\label{ak1}
	\end{eqnarray}
	where
	\begin{equation*}
	b_k=\sup_{0\leq t\leq T}\|\nabla\psk\|_{2N,\Omega}.
	\end{equation*}
	On the other hand, we can deduce from Lemma \ref{lqe} that 
	%For each $q>1$ 
	there is a positive number $c=(N, \Omega)$ such that
	\begin{equation*}
	\|\nabla\psk\|_{2N,\Omega}\leq c\nwko^2\|\nabla\psko\|_{2N,\Omega}+c\|\nabla p_0\|_{2N,\Omega}.
	\end{equation*}
	It immediately follows
	\begin{equation}\label{bk1}
	b_k\leq ca_{k-1}^2b_{k-1}+c\|\nabla p_0\|_{2N, \Omega}.
	\end{equation}
	Define
	\begin{equation*}
	d_k=a_k+b_k.
	\end{equation*}
	Adding \eqref{bk1} to \eqref{ak1}, we derive
	\begin{equation}\label{db1}
	d_k\leq c\left(1+T^{\frac{1}{N+2}}\right)d_{k-1}^3+cd_0.
	\end{equation}
	Observe from \eqref{pz1}-\eqref{pzt} that
	\begin{equation*}
	\|\nabla p_0\|_{2N,\Omega}\leq c\|S(x)\|_{\frac{2N}{3},\Omega}.
	\end{equation*}
	In view of Lemma \ref{small}, if
	\begin{equation*}
	cd_0^2\left(1+T^{\frac{1}{N+2}}\right)\leq c\left(\|\m_0\|_{\infty, \Omega}+\|S(x\|_{\frac{2N}{3},\Omega}\right)^2\left(1+T^{\frac{1}{N+2}}\right)<1
	\end{equation*} then
	\begin{equation}\label{wpb}
	d_k=\|\wk\|_{\infty,\ot}+\sup_{0\leq t\leq T}\|\nabla\psk\|_{2N,\Omega}\leq c\left(\|\m_0\|_{\infty, \Omega}+\|S(x\|_{\frac{2N}{3},\Omega}\right)\equiv C_0.
	\end{equation}
	%\begin{remark}
	%	From the proof of Lemma \ref{lwb}, we see that $|\ot|$ in %\eqref{db1} can be replaced by $\frac{1}{M}\ioT|\wk|dxdt$.
	%	Thus if we could design an approximation scheme in which
	%\end{remark}
	We must show that the whole sequence $\{\wk, \psk\}$ converges in a suitable sense. To this end, we conclude from \eqref{agp11} that
	\begin{eqnarray}
	\lefteqn{\partial_t(\wk-\wko)-D^2 \Delta(\wk-\wko)+|\wk|^{2(\gamma-1)}\wk-|\wko|^{2(\gamma-1)}\wko}\nonumber\\
	&=&E^2\left[(\wko\cdot\nabla\psko)\nabla\psko-(\w_{k-2}\cdot\nabla p_{k-2})\nabla p_{k-2}\right]\  \  \textup{in $\ot$,}\nonumber\\
	&& k=2,3,\cdots.\label{2cl1}
	\end{eqnarray}
	By Lemma \ref{plap}, we have
	\begin{equation*}
	\left(|\wk|^{2(\gamma-1)}\wk-|\wko|^{2(\gamma-1)}\wko\right)\cdot\left(\wk-\wko\right)\geq 0.
	\end{equation*}
	Use $\wk-\wko$ as a test function in \eqref{2cl1} and keep the above inequality and \eqref{wpb} in mind to derive
	\begin{eqnarray}
	\lefteqn{\frac{1}{2}\frac{d}{dt}\io|\wk-\wko|^2dx+D^2\io|\nabla(\wk-\wko)|^2dx}\nonumber\\
	&\leq &E^2\io\left[(\wko\cdot\nabla\psko)\nabla\psko-(\w_{k-2}\cdot\nabla p_{k-2})\nabla p_{k-2}\right](\wk-\wko)dx\nonumber\\
	&\leq& c\left(\io\left|(\wko\cdot\nabla\psko)\nabla\psko-(\w_{k-2}\cdot\nabla p_{k-2})\nabla p_{k-2}\right|^{\frac{2N}{N+2}}dx\right)^{\frac{N+2}{N}}\nonumber\\
	&&+\frac{D^2}{2}\io|\nabla(\wk-\wko)|^2dx.\label{2cl2}
	\end{eqnarray}
	We write
	\begin{eqnarray}
	\lefteqn{(\wko\cdot\nabla\psko)\nabla\psko-(\w_{k-2}\cdot\nabla p_{k-2})\nabla p_{k-2}}\nonumber\\
	&=&\left((\wko-\w_{k-2})\cdot\nabla\psko\right)\nabla\psko+\left(\w_{k-2}\cdot\left(\nabla\psko-\nabla p_{k-2}\right)\right)\nabla\psko\nonumber\\
	&&+\left(\w_{k-2}\cdot\nabla p_{k-2}\right)(\nabla\psko-\nabla p_{k-2}).
	\end{eqnarray}
	Use this in \eqref{2cl2} to obtain
	\begin{eqnarray}
	\lefteqn{\frac{d}{dt}\io|\wk-\wko|^2dx+\io|\nabla(\wk-\wko)|^2dx}\nonumber\\
	&\leq &c\left(\io|\wko-\w_{k-2}|^{\frac{2N}{N-2}}dx\right)^{\frac{N-2}{N}}\left(\io|\nabla\psko|^{N}dx\right)^{\frac{4}{N}}\nonumber\\
	&&+c\|\w_{k-2}\|_{\infty,\ot}^2\left(\|\nabla\psko\|_{2N,\Omega}^2+\|\nabla p_{k-2}\|_{2N,\Omega}^2\right)\nonumber\\
	&&\cdot\left(\io|\nabla\psko-\nabla p_{k-2}|^{\frac{2N}{N+1}}dx\right)^{\frac{N+1}{N}}
	\nonumber\\
	&\leq& cC_0^4\left(\io|\nabla(\wko-\w_{k-2})|^{2}dx+\io\left|\nabla\psko-\nabla p_{k-2}\right|^{2}dx\right).\label{2cl3}
	\end{eqnarray}
	By \eqref{agp10}, we have
	\begin{eqnarray}
	\lefteqn{-\Delta(\psk-\psko)}\nonumber\\
	&=&\textup{div}\left[(\wko\cdot\nabla\psko)\wko-(\w_{k-2}\cdot\nabla p_{k-2}) \w_{k-2}\right]\ \ \textup{in $\ot$,}\nonumber\\
	&&\ \ k=2,3,\cdots.
	\end{eqnarray}
	Upon using $\psk-\psko$ as a test function in the above equation,
	we arrive at
	\begin{eqnarray}
	\lefteqn{\io|\nabla(\psk-\psko)|^2dx}\nonumber\\
	&=&\io\left[(\wko\cdot\nabla\psko)\wko-(\w_{k-2}\cdot\nabla p_{k-2}) \w_{k-2}\right]\nabla(\psk-\psko)dx\nonumber\\
	&\leq&\frac{1}{2}\io\left|(\wko\cdot\nabla\psko)\wko-(\w_{k-2}\cdot\nabla p_{k-2}) \w_{k-2}\right|^{2}dx\nonumber\\
	&&+\frac{1}{2}\io|\nabla(\psk-\psko)|^2dx.\label{2cl4}
	\end{eqnarray}
	We represent
	\begin{eqnarray}
	\lefteqn{(\wko\cdot\nabla\psko)\wko-(\w_{k-2}\cdot\nabla p_{k-2}) \w_{k-2}}\nonumber\\
	&=&((\wko-\w_{k-2})\cdot\nabla\psko)\wko+(\w_{k-2}\cdot(\nabla\psko-\nabla p_{k-2}))\wko\nonumber\\
	&&+(\w_{k-2}\cdot\nabla p_{k-2}) (\wko-\w_{k-2}).
	\end{eqnarray}
	We calculate
	\begin{eqnarray}
	\lefteqn{\io\left|((\wko-\w_{k-2})\cdot\nabla\psko)\wko\right|^2dx}\nonumber\\
	&\leq&\|\wko\|_{\infty,\ot}^2\left(\io\left|\wko-\w_{k-2}\right|^{\frac{2N}{N-2}}dx\right)^{\frac{N-2}{N}}\left(\io\left|\nabla\psko\right|^Ndx\right)^{\frac{2}{N}}\nonumber\\
	&\leq&cC_0^4\io\left|\nabla(\wko-\w_{k-2})\right|^{2}dx.
	\end{eqnarray}
	Similarly, we have
	\begin{equation}
	\io\left|(\w_{k-2}\cdot\nabla p_{k-2}) (\wko-\w_{k-2})\right|^2dx
	\leq cC_0^4\io\left|\nabla(\wko-\w_{k-2})\right|^{2}dx.
	\end{equation}
	Plug the preceding estimates into \eqref{2cl4} to derive
	\begin{eqnarray}
	\lefteqn{\io|\nabla(\psk-\psko)|^2dx}\nonumber\\
	&\leq& cC_0^4\left(\io|\nabla(\wko-\w_{k-2})|^{2}dx+\io\left|\nabla\psko-\nabla p_{k-2}\right|^{2}dx\right).\label{2cl5}
	\end{eqnarray}
	Let
	\begin{equation}
	\eta_k=\ioT|\nabla(\wk-\w_{k-1})|^{2}dxdt+\ioT\left|\nabla\psk-\nabla p_{k-1}\right|^{2}dxdt.
	\end{equation}
	Add \eqref{2cl5} to \eqref{2cl3} and integrate the resulting equation over $(0,T)$ to yield
	\begin{equation}
	\eta_k\leq cC_0^4\eta_{k-1}.
	\end{equation}
	This implies
	\begin{equation}
	\eta_k\leq \left( cC_0^4\right)^{k-1}\eta_1.
	\end{equation}
	Hence if
	\begin{equation}\label{cz}
	cC_0^4<1,
	\end{equation}
	then the two series's
	\begin{eqnarray}
	\nabla\w_0+\nabla\w_1-\nabla\w_0+&\cdots&+\nabla\wk-\nabla\wko+\cdots\ \ \textup{and}\\
	\nabla p_0+\nabla p_1-\nabla p_0+&\cdots&+\nabla\psk-\nabla\psko+\cdots
	\end{eqnarray}
	converge in $L^2(0,T; \left(W^{1,2}(\Omega)\right)^N)$ and $L^2(0,T; (W^{1,2}(\Omega))$, respectively.
	%$\lim_{k\rightarrow\infty}\eta_k=0$. 
	It immediately follows that the two sequences $\{\wk\}$ and $\{\psk\}$ also converge  in 
	$L^2(0,T;\left(W^{1,2}(\Omega)\right)^N)$ and $L^2(0,T;W^{1,2}(\Omega))$, respectively. We can also deduce from
	\eqref{wpb} and Lemma \ref{hcon} that $\{\wk\}$ is uniformly convergent on $\overline{\ot}$. We can let $k\rightarrow\infty$ in 
	\eqref{agp10} and \eqref{agp11}. Note from \eqref{wpb} that \eqref{cz} is valid if we make the term $\|\m_0\|_{\infty,\Omega}+
	\|S(x)\|_{\frac{2N}{3},\Omega}$ suitably small. The proof is complete.
\end{proof}

\end{document}